\newtheorem{theorem}{Theorem}[section]
\newtheorem{remark}{Remark}[section]
\newtheorem{definition}{Definition}[section]
\newtheorem{lemma}[theorem]{Lemma}
\newtheorem{pro}[theorem]{Proposition}
\newtheorem{cor}[theorem]{Corollary}
\renewcommand{\div}{{\rm div }}
\newcommand{\bt}{\begin{theorem}}
\newcommand{\bl}{\begin{lemma}}
\newcommand{\el}{\end{lemma}}
\newcommand{\et}{\end{theorem}}
\newcommand{\bn}{\begin{eqnarray}}
\newcommand{\en}{\end{eqnarray}}
\newcommand{\bnn}{\begin{eqnarray*}}
\newcommand{\enn}{\end{eqnarray*}}
\newcommand{\ba}{\begin{aligned}}
\newcommand{\ea}{\end{aligned}}
\newcommand{\be}{\begin{equation}}
\newcommand{\ee}{\end{equation}}
\newcommand{\p}{\partial}
\newcommand{\R}{\mathbb{R}}
\newcommand{\Bf}{{\boldsymbol{f}}}
\newcommand{\Bv}{{\boldsymbol{v}}}
\newcommand{\oBv}{\overline{{\boldsymbol{v}}}}
\newcommand{\Bn}{{\boldsymbol{n}}}
\newcommand{\Bu}{{\boldsymbol{u}}}
\newcommand{\oBu}{\overline{{\boldsymbol{u}}}}
\newcommand{\Be}{{\boldsymbol{e}}}
\newcommand{\Bh}{{\boldsymbol{h}}}
\newcommand{\BU}{\boldsymbol{U}}
\newcommand{\Bg}{{\boldsymbol{g}}}
\newcommand{\Bw}{{\boldsymbol{w}}}
\newcommand{\Ba}{{\boldsymbol{a}}}
\newcommand{\Bp}{{\boldsymbol{\phi}}}
\begin{document}

\title[Two-dimensional flows with Dirichlet boundary condition]
{On the asymptotic behavior of solutions to the steady Navier-Stokes system in two-dimensional channels}

\author{Han Li}
\address{School of Mathematical Sciences, Soochow University, No. 1 Shizi Street, Suzhou, China}
\email{hlihli@stu.suda.edu.cn}

\author{Kaijian Sha}
\address{School of mathematical Sciences, Shanghai Jiao Tong University, 800 Dongchuan Road, Shanghai, China}
\email{kjsha11@sjtu.edu.cn}

\begin{abstract}
In this paper, we investigate the incompressible steady Navier-Stokes system with no-slip boundary condition in a two-dimensional channel. Given any flux, the existence of solutions is proved as long as the width of cross-section of the channel grows more slowly than the linear growth. Furthermore, if the flux is suitably small, the solution is unique even when the width of the channel is unbounded. Finally, based on the estimate of  Dirichlet norm  on the truncated domain, one could obtain the pointwise decay rate of the solution for arbitrary flux.
\end{abstract}

\keywords{}
\subjclass[2010]{
35Q30, 35J67, 76D05,76D03}

\thanks{Updated on \today}

\maketitle

\section{Introduction}
The famous Leray problem in a channel $\Omega$ with straight outlets, pioneered by Leray in 1950s,  is to study the well-posedness of the steady Navier-Stokes system
\begin{equation}\label{NS}
\left\{
\begin{aligned}
&-\Delta \Bu+\Bu\cdot \nabla \Bu +\nabla p=0 ~~~~&\text{ in }\Omega,\\
&{\rm div}~\Bu=0&\text{ in }\Omega,
\end{aligned}\right.
\end{equation}
supplemented with no-slip boundary condition
\begin{equation}\label{BC}
   \Bu=0  \text{ on }\partial\Omega,
\end{equation} 
and the far field constraint
\begin{equation}\label{far field}
	\Bu \to \BU \text{ as }|x_1|\to \infty.
\end{equation}
Here the unknown function $\Bu=(u_1,\cdots,u_N)$ $(N=2,3)$ is the velocity and $p$ is the pressure, $\BU$ is the shear flow associated to the straight outlets. For example, if $\Omega$ is a two-dimensional channel satisfying
\[\Omega\cap \{(x_1,x_2):~x_1>0\}=\{(x_1,x_2):~ x_1>0,~x_2\in(-1,1)\},\] 
 then $\BU=\frac{3}{4}\Phi(1-x_2^2)\Be_1$ is the Poiseuille flow, where the constant $\Phi$ is called the flux of the flow. Without loss of generality, we always assume that $\Phi$ is nonnegative in this paper.

The major breakthrough for the Leray problem in infinitely long channels was made by Amick \cite{A1,A2}, Ladyzhenskaya and Solonnikov \cite{LS}. It was proved in \cite{A1, LS} that Leray problem is solvable as long as the flux is small. Actually, the existence of solutions of \eqref{NS} in a nozzle
with arbitrary flux  was also proved in \cite{LS}. However, the far field behavior and the uniqueness of such solutions are not clear when the flux is large. To the best of our knowledge, there is no result on the far field behavior of solutions of steady Navier-Stokes system with large flux except for the axisymmetric solutions in a pipe studied in \cite{WX1,WX2}. One may refer to \cite{AF,Ga, Horgan,HW} and the references therein for more results on the asymptotic behavior of solutions to Leray problem.

In fact, Leray problem can be generalized to the  pipe-like domains, whose outlets may not necessary be straight, provided that  the far field constraint \eqref{far field} is replaced by the following flux constraint
\begin{equation}\label{flux constraint}
	\int_{\Sigma(t)}u_1 \,ds=\Phi \text{ for any }t\in \R,
\end{equation}
where $\Sigma(t)=\{x\in \Omega:~x_1=t\}$. Note that the far field constraint \eqref{far field} implies automatically the flux constraint \eqref{flux constraint} when the outlets of the pipe are straight. 

Due to its importance in both mathematical and practical application, the well-posedness of the Navier-Stokes system in pipe-like domains has received special attention in the past 40 years. In \cite{AF}, Amick and Fraenkel studied the well-posedness of the problem \eqref{NS}-\eqref{BC} and \eqref{flux constraint} in two-dimensional channels of various types via the technique of conformal transformation. For channels that  widen strongly at infinity, it is proved that given any flux, the problem \eqref{NS}-\eqref{BC} and \eqref{flux constraint} has a classical solution whose velocity tends to zero at far field. However, their attempts on a rate of decay for the velocity in this case have been wholly unsuccessful. On the other hand, if the channel widen feebly at far field, the existence of the solutions is obtained only for small flux. In such case, the velocity converges exponentially to a slightly distorted Poiseuille velocity at far field. 

For the general pipe-like domain, the solvability of problem  \eqref{NS}-\eqref{BC} and \eqref{flux constraint} was firstly studied in \cite{LS0,SP,S2}. Given nonzero flux, the existence of solutions with finite Dirichlet integral can be obtained only in the pipes with wide outlets.  In \cite{LS}, Ladyzhenskaya and Solonnikov considered the pipes with both narrow and wide outlets and  proved that the problem is solvable for any flux, provided the outlets of the pipe satisfy certain geometric assumptions. The solutions have either finite or infinite Dirichlet integral over the outlets dependent of the shape of the outlets. One may refer to \cite{KP0,KP,MF1,MF2,P0,S1,S3} for more results on the well-posedness of the incompressible Navier-Stokes system in pipe-like domains. 

The asymptotic behavior of the solutions in domains with noncompact boundary  was studied in \cite{AF,Ga,NP,P1,P2,P3} and references therein. It is believed that the behavior of solutions to Navier-Stokes problem at far field strongly depends on the geometry of outlets to infinity. Suppose that the outlet is characterized by 
\[\Omega\cap \{x:~x_1>0\}=\left\{x:~x_1>0,~\sqrt{x_2^2+\cdots+ x_N^2}<g(x_1)\right\}\]
for $N=2,3$. If $g(t)=Ct^{1-\alpha}$ for some constants $C$ and $\alpha\in(0,1)$, then the explicit asymptotic expansion for the solution is constructed in \cite{NP}. If $g(t)$ satisfies the global Lipschitz condition and $g'(t)\to 0$ as $t\to \infty$, the pointwise decay of the solutions with arbitrary flux is obtained in \cite{P1} for three-dimensional case. However, for two-dimensional channels, the results are proved only for small flux.

In this paper, we study the problem \eqref{NS}-\eqref{BC} and \eqref{flux constraint} in a two-dimensional channel $\Omega$ of the form
\begin{equation}\label{defOmega}
\Omega=\{x=(x_1,x_2):x_1\in \mathbb{R},~f_1(x_1)<x_2<f_2(x_1)\},
\end{equation}
where $f_1$ and $f_2$ are assumed to be smooth functions. 
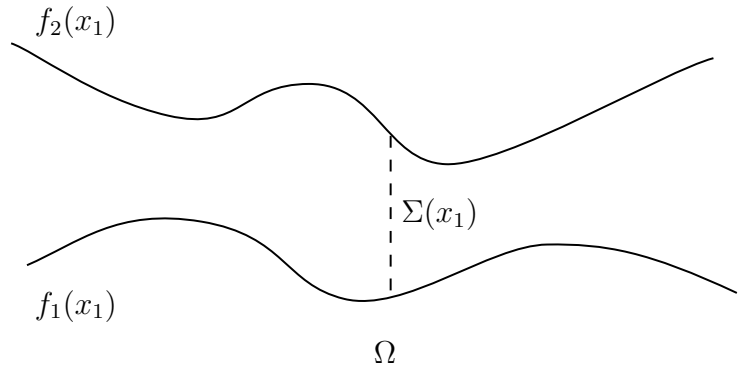
\begin{figure}[h]
	\centering

	\tikzset{every picture/.style={line width=0.75pt}} 

	\begin{tikzpicture}[x=0.75pt,y=0.75pt,yscale=-1,xscale=1]
	
	\draw    (118.6,84.5) .. controls (130.6,88.5) and (160.6,112.5) .. (197.8,121) .. controls (235,129.5) and (234.12,105.1) .. (268.8,105) .. controls (303.48,104.9) and (307.37,144.08) .. (337.6,145.5) .. controls (367.83,146.92) and (447.45,98.71) .. (472.8,92) ;
	\draw    (126.6,196.5) .. controls (152.46,185.3) and (171.99,167.36) .. (214.6,174.5) .. controls (257.21,181.64) and (256.17,206.04) .. (286.6,213.5) .. controls (317.03,220.96) and (359.6,186.5) .. (388.95,186.06) .. controls (418.31,185.62) and (439.6,189.5) .. (484.6,210.5) ;
	\draw  [dash pattern={on 4.5pt off 4.5pt}]  (310,131.2) -- (310,212.2) ;
	
	\draw (300,233.4) node [anchor=north west][inner sep=0.75pt]    {$\Omega $};
	\draw (129,64.4) node [anchor=north west][inner sep=0.75pt]    {$f_{2}( x_{1})$};
	\draw (129,207.4) node [anchor=north west][inner sep=0.75pt]    {$f_{1}( x_{1})$};
	\draw (314,161.4) node [anchor=north west][inner sep=0.75pt]    {$\Sigma ( x_{1})$};

	\end{tikzpicture}

	\caption{The channel $\Omega$}
\end{figure}

Before stating the main results of this paper, the definitions of some function spaces and the weak solution are introduced.
\begin{definition}\label{def1}
	Given a bounded domain $D\subset  \mathbb{R}^2$, denote
	\[
	L_0^2(D)=\left\{w(x): w\in L^2(D), \, \int_D w(x)dx =0 \right\}.
	\]
Given $\Omega$ defined in \eqref{defOmega},		for any constants $ a<b$ and $T>0$, denote
	\begin{equation*}
	\Omega_{a,b}=\{(x_1,x_2)\in \Omega:a<x_1<b\}\  \text{ and }\ \Omega_T=\Omega_{-T,T}.
	\end{equation*}
Define
\begin{equation*}
	H_{0,\sigma}^1(\Omega_{a,b})=\{\Bu \in H_0^1(\Omega_{a,b}):~\operatorname{div} \Bu=0\text{ in }\Omega_{a,b}\}
\end{equation*}
and 
\begin{equation*}
	H_\sigma(\Omega)=\left\{\Bu\in H_{loc}^1(\Omega):~{\rm div}\,\Bu=0 \text{ in }\Omega,~\Bu=0 \text{ on } \partial\Omega\right\}.
\end{equation*}
\end{definition}

\begin{definition}
	A vector field $\Bu\in H_\sigma(\Omega)$ is said to be a weak solution of the Navier-Stokes system \eqref{NS} with Dirichlet boundary conditions \eqref{BC} if for any $T>0$, $\Bu$ satisfies
\begin{equation}\label{weak solution}
\int_{\Omega}\nabla\Bu:\nabla\Bp+\Bu\cdot\nabla\Bu\cdot \Bp\,dx=0 \quad \text{for any}\,\,  \Bp \in H_{0,\sigma}^1(\Omega_T). 	
\end{equation}
\end{definition}

Denote
\begin{equation}\label{deffbar}
	f(x_1):= f_2(x_1)-f_1(x_1)\quad \text{and}\quad \bar{f}(x_1):= \frac{f_2(x_1)+f_1(x_1)}{2}.
\end{equation}
In this paper, we always assume that
\begin{equation}\label{assumpf}
	\inf_{x_1\in \mathbb{R}} f(x_1)=\underline{d} >0\quad \text{and}\quad \max_{i=1,2} \|f_i'\|_{C(\mathbb{R})}=\beta <+\infty.
\end{equation}

 The first main result of this paper can be stated as follows.
\begin{theorem}\label{bounded channel}
	Let $\Omega$ be the domain given in \eqref{defOmega} . If the width of the channel $\Omega$  is uniformly bounded, i.e.,
	\be \label{bounded}
	f(x_1) := f_2(x_1)-f_1(x_1) \leq \overline{d} < +\infty, \ \ \ \text{for any}\, \, x_1 \in \mathbb{R},
	\ee
	then the problem \eqref{NS}-\eqref{BC} and \eqref{flux constraint} has a solution $\Bu\in H_\sigma(\Omega)$ satisfying the estimate
	\begin{equation}\label{1-4}
		\|\nabla \Bu\|_{L^2(\Omega_t)}^2\leq \overline{C}(1+t),\ \ \text{ for any }t\ge 0,
	\end{equation}
	where $\overline{C}$ is a positive constant independent of $t$.
	Furthermore, the solution satisfies the following properties.
	\begin{enumerate}
		\item[(a)] There exists a constant $\Phi_0>0$ such that for any flux $\Phi\in [0,\Phi_0)$, the solution $\Bu$ obtained in Theorem \ref{bounded channel} is unique
	in the class of functions satisfying \eqref{1-4}.

		\item[(b)]	If, in addition, the outlet of the channel is straight, i.e., there exists a constant $k>0$ such that $\Omega\cap \{x_1>k\}=\{(x_1,x_2): x_1>k, x_2\in (c_1, c_2)\}$ for some constants $c_1<c_2$, then there exists a constant $\Phi_1>0$ such that for any flux $\Phi\in [0,\Phi_1)$, the solution $\Bu$ tends to the corresponding Poiseuille flow $\BU=U(x_2)\Be_1$ in the sense
		\begin{equation*}
			\|\Bu-\BU\|_{H^1(\Omega\cap \{x_1>k\})}<\infty.
		\end{equation*}
	\end{enumerate}
\end{theorem}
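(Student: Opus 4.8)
The plan is to construct the solution by the classical Leray--Ladyzhenskaya--Solonnikov scheme adapted to the present two-dimensional setting, with the crucial new ingredient being a linear-in-$t$ growth estimate for the Dirichlet integral that replaces the usual finiteness of the global Dirichlet integral. First I would construct a suitable divergence-free extension $\Ba$ of the flux data: using $f$ and $\bar f$ from \eqref{deffbar}, set $\Ba = \na^\perp \big(\Phi\, \Psi((x_2-\bar f(x_1))/f(x_1))\big)$ with $\Psi$ a fixed smooth profile interpolating between the appropriate boundary values so that $\Ba$ is divergence-free, vanishes on $\pa\Omega$, and carries flux $\Phi$ through every section $\Sigma(t)$. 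Because of \eqref{assumpf}, the field $\Ba$ and its derivatives are bounded uniformly in $x_1$, so $\|\na \Ba\|_{L^2(\Omega_t)}^2 \le C(1+t)$, and moreover $\Ba$ can be chosen supported near $\pa\Omega$ so that the Hopf-type inequality $|\int_\Omega \Bw\cdot\na\Ba\cdot\Bw\,dx| \le \tfrac12 \|\na\Bw\|_{L^2}^2 + C\Phi^2\|\Bw\|_{L^2}^2$ holds on each $\Omega_t$ after using the Poincar\'e inequality valid on the bounded-width channel (this is where \eqref{bounded} enters: the Poincar\'e constant on strips of width $\le \overline d$ is uniform).

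Next, seeking $\Bu = \Ba + \Bv$ with $\Bv \in H_{0,\sigma}^1$, I would solve the problem on each truncated domain $\Omega_k$ by the Galerkin method, obtaining $\Bv_k \in H_{0,\sigma}^1(\Omega_k)$ solving the weak formulation with test functions in $H_{0,\sigma}^1(\Omega_k)$. The heart of the matter is the a priori bound: testing with $\Bv_k$ and using the Hopf inequality gives control of $\|\na\Bv_k\|_{L^2(\Omega_k)}$, but in an unbounded-width or merely bounded-width channel one cannot expect a bound uniform in $k$. Instead I would run the Ladyzhenskaya--Solonnikov ``technique of reduction to a differential inequality'': introduce $y(t) = \|\na\Bv_k\|_{L^2(\Omega_t)}^2$, multiply the equation by a cutoff $\zeta(x_1/t)$ and the velocity, and derive an inequality of the form $y(t) \le C_1 t\, y'(t) + C_1 (1+t)$ (the linear inhomogeneity coming from the $\Ba$-terms, whose Dirichlet integral grows like $1+t$, rather than being bounded). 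Integrating this ODE inequality yields $y(t) \le \overline C(1+t)$ with $\overline C$ independent of $k$ and $t$, which is exactly \eqref{1-4}. I expect this differential-inequality step, and in particular tracking how the bounded-width hypothesis makes all the constants uniform along the channel, to be the main technical obstacle.

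Having the uniform bound \eqref{1-4}, a diagonal argument extracts a subsequence $\Bv_k \rightharpoonup \Bv$ weakly in $H^1_{loc}$, and standard compactness (Rellich on each $\Omega_T$) passes to the limit in the nonlinear term, producing a weak solution $\Bu = \Ba + \Bv \in H_\sigma(\Omega)$ satisfying \eqref{1-4}; regularity of stationary Navier--Stokes in two dimensions upgrades it to a classical solution away from corners. For part (a), suppose $\Bu_1,\Bu_2$ are two solutions in the class \eqref{1-4} and set $\Bw = \Bu_1 - \Bu_2 \in H_\sigma(\Omega)$ with zero flux; the difference solves a linear equation, and testing against $\Bw \zeta(x_1/t)^2$ and using that the zero-flux field $\Bw$ satisfies Poincar\'e on the bounded channel while $\|\Bu_i\|$ enter only through the linear-growth bound, one obtains $\|\na\Bw\|_{L^2(\Omega_t)}^2 \le C\Phi \big(\|\na\Bw\|_{L^2(\Omega_t)}^2 + \text{boundary flux terms}\big)$; choosing $\Phi < \Phi_0$ small makes the coefficient $< 1$, and the Saint-Venant-type iteration (Ladyzhenskaya--Solonnikov) then forces $\na\Bw \equiv 0$, hence $\Bw \equiv 0$. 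For part (b), when the outlet $\{x_1>k\}$ is a straight strip $(c_1,c_2)$, the Poiseuille flow $\BU = U(x_2)\Be_1$ is an exact solution there with the right flux; writing $\Bu - \BU = \Bv$ on $\{x_1>k\}$, the estimate \eqref{1-4} gives $\|\na\Bv\|_{L^2(\Omega_{k,k+t})}^2 \le \overline C(1+t)$, and re-running the differential-inequality argument \emph{for the difference} on the straight outlet — where the inhomogeneous $\Ba$-terms are now absent because $\BU$ itself solves the equation — improves the inequality to the homogeneous form $z(t) \le C_2 t\, z'(t)$ with finite initial data, whose only bounded-growth solutions satisfy $z(t) \le z(\infty) < \infty$; this, together with the zero-flux Poincar\'e inequality on the strip, yields $\|\Bu - \BU\|_{H^1(\Omega\cap\{x_1>k\})} < \infty$ provided $\Phi < \Phi_1$ is small enough that the smallness needed to close the homogeneous inequality holds.
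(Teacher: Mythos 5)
Your overall strategy (flux carrier plus the Ladyzhenskaya--Solonnikov truncation/differential-inequality scheme, limit over truncated domains, and smallness arguments for uniqueness and for comparison with Poiseuille flow) is the same as the paper's, but there are two genuine gaps. The first and most serious one is your flux carrier. You take $\Ba=\na^\perp\big(\Phi\,\Psi((x_2-\bar f(x_1))/f(x_1))\big)$ with a \emph{fixed} profile $\Psi$, and then estimate the critical trilinear term by
$\big|\int \Bw\cdot\na\Ba\cdot\Bw\,dx\big|\le \tfrac12\|\na\Bw\|_{L^2}^2+C\Phi^2\|\Bw\|_{L^2}^2$,
absorbing the last term by Poincar\'e. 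After Poincar\'e the second term is of size $C\Phi^2\overline d^{\,2}\|\na\Bw\|_{L^2}^2$, which can only be absorbed when $\Phi$ is small. Theorem \ref{bounded channel} asserts existence for \emph{arbitrary} flux, and this is exactly why the paper does not use a fixed-profile carrier: it uses the Hopf-type carrier $\Bg$ of Section \ref{secflux}, built from $\mu\big(1+\varepsilon\ln\frac{f_2-x_2}{x_2-\bar f}\big)$, whose key property (Lemma \ref{lemma1}) is $\int \Bg^2w^2\,dx\le C\Phi^2\varepsilon^2\int|\partial_{x_2}w|^2\,dx$ via the Hardy inequality, so that $|\int\Bv\cdot\na\Bv\cdot\Bg\,dx|\le C\varepsilon\Phi\|\na\Bv\|_{L^2}^2$ with $\varepsilon$ at your disposal; choosing $\varepsilon$ small relative to $\Phi$ closes the estimate for every flux. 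Without this $\varepsilon$-parameter your construction only proves the existence part for small $\Phi$, which is strictly weaker than the statement.

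The second gap concerns the differential inequality and the pressure. When you test with $\zeta\Bv$ the test function is no longer divergence-free, so the term $\int p\,v_1\,\partial_{x_1}\zeta\,dx$ appears; the paper calls this the most troublesome term and handles it by the Bogovskii map (Lemma \ref{lemmaA5}) on the strips where $\partial_{x_1}\zeta\neq0$, using that $v_1$ has zero mean over cross-sections by \eqref{flux constraint}, with constants uniform in $t$ thanks to \eqref{assumpf} and \eqref{bounded}. Your sketch omits this entirely in the existence part. Moreover the inequality that actually comes out of the local energy estimate is not $y(t)\le C_1 t\,y'(t)+C_1(1+t)$ but $y(t)\le C_1\{y'(t)+[y'(t)]^{3/2}\}+C_2(1+t)$, the $[y']^{3/2}$ term coming from the cubic (convective and pressure) contributions on the strips; and the conclusion $y(t)\le C_3+C_4t$ is not obtained by ``integrating'' forward but by the backward comparison argument of Lemma \ref{lemmaA4}, which requires the endpoint bound $y(T)\le\varphi(T)$ supplied by the global estimate for the approximate solution on $\Omega_T$. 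The same structural point affects your uniqueness and part (b) arguments: the correct mechanism is the trichotomy from Lemma \ref{lemmaA4} (either the comparison quantity vanishes/stays bounded, or it grows at least like $t^3$), which is then ruled out by the linear-growth class \eqref{1-4}; your linear-in-$t\,y'$ inequality and ``Saint-Venant iteration'' as written do not produce this, so those steps would need to be redone with the correct form of the inequality.
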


\begin{remark}
	The constant $\overline{C}$ depends only on the flux $\Phi$, and the domain $\Omega$. More precisely, it depends on $\Phi$, $\|f_i\|_{C^2(\R)}$, and $\underline{d}$.
\end{remark}

For the flows in channels with unbounded outlets, we have also the following theorem.
\begin{theorem}\label{unbounded channel}
	Let $\Omega$ be the domain given in \eqref{defOmega} . Suppose that	
	\begin{equation}\label{assumpf''}
		\max_{i=1,2}\sup_{x_1\in \R}|(f_i^{\prime \prime} f)(x_1)|<\infty.
	\end{equation}
\begin{enumerate}
	\item[(\romannumeral1)] (Existence) The problem \eqref{NS}-\eqref{BC} and \eqref{flux constraint} has a solution $\Bu\in H_\sigma(\Omega)$  satisfying the estimate
	\begin{equation}\label{1-9}
		\|\nabla \Bu\|_{L^2(\Omega_t)}^2\leq \widetilde{C}\left(1+\int_{-t}^t f^{-3}(x_1)\,dx_1 \right)\text{ for any }t\ge 0,
	\end{equation}
	where $\widetilde{C}$ is a positive constant depending only on the flux $\Phi$, and $\Omega$.
	
	\item[(\romannumeral2)] (Uniqueness)
	 If, in addition, it holds  that either
		\begin{equation}\label{1-16}
			\left|\int_0^{\pm\infty}f^{-3}(\tau)\,d\tau\right|=\infty,\ \ \ \lim_{|t|\to +\infty} f'(t)= 0,
		\end{equation}
		or
		\begin{equation}\label{1-17}
			\left|\int_0^{\pm\infty}f^{-3}(\tau)\,d\tau\right|<\infty,\ \ \ \lim_{t\to \pm\infty}\frac{\sup_{\pm \tau \ge |t|}f'(\tau)}{\left|\int_t^{\pm \infty} f^{-3}(\tau)\,d\tau\right|^\frac12}= 0,
		\end{equation}
	 then there exists a constant $\Phi_2>0$ such that for any flux $\Phi\in [0,\Phi_2)$, the solution $\Bu$ obtained in Theorem \ref{unbounded channel} is unique in the class of functions satisfying \eqref{1-9}.
\end{enumerate}
\end{theorem}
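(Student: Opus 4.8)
The plan is to follow the classical Leray--Ladyzhenskaya--Solonnikov strategy: reduce to a zero-flux problem via a divergence-free flux carrier $\Ba$ adapted to the variable width, solve on bounded sub-channels $\Omega_k$, obtain estimates uniform in $k$ through a Saint--Venant type differential inequality, and pass to the limit. For the carrier, with $s(x)=(x_2-f_1(x_1))/f(x_1)$ the normalized transverse coordinate, I would take $\Ba=\nabla^\perp\bigl(\Phi\,\Gamma(s)\bigr)$, where $\Gamma$ is a Leray--Hopf logarithmic profile ($\Gamma(0)=0$, $\Gamma(1)=1$, $\Gamma'$ flat at the endpoints, $|\Gamma'(s)|\lesssim(|\ln\delta|\,s)^{-1}$ in transverse layers of relative width between $\delta$ and $1/2$ near each boundary, with $\delta=\delta(\Phi)$). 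Then $\Ba$ is divergence-free, vanishes on $\partial\Omega$, carries flux $\Phi$ through every $\Sigma(t)$, and---this is where \eqref{assumpf''} is used, to bound the $x_1$-derivatives of $s$, which involve $f_i''f$ and $(f')^2$---one verifies the two properties needed below: a variable-width Leray--Hopf inequality $\int_{\Omega_k}|\Bz|^2|\nabla\Ba|\,dx\le\ve\int_{\Omega_k}|\nabla\Bz|^2\,dx$ for all $\Bz\in H^1_{0,\sigma}(\Omega_k)$, uniformly in $k$ (by a one-dimensional Hardy inequality in the layers), and the sharp Dirichlet bound $\|\nabla\Ba\|_{L^2(\Omega_t)}^2\le C(\Phi)\int_{-t}^t f^{-3}(x_1)\,dx_1$ (the power $f^{-3}$ arising because the carrier lives at transverse scale $f$, so $|\nabla\Ba|\sim f^{-2}$ and its square integrated over a section of area $\sim f$ is $\sim f^{-3}$).

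Writing $\Bu=\Ba+\Bv$, I would solve on each bounded $\Omega_k$ for $\Bv_k\in H^1_{0,\sigma}(\Omega_k)$; existence there is classical (Leray--Schauder degree), reduced to an a priori estimate in which the linearized convective term is absorbed by the Leray--Hopf property of $\Ba$. The heart of the matter is to make this estimate uniform in $k$: using the two-dimensional stream function $\phi_k$ of $\Bv_k$ (single-valued and vanishing on $\partial\Omega_k$, since $\Bv_k$ has zero flux and vanishes there), test the equation with $\nabla^\perp(\phi_k\,\eta_t^2)$, $\eta_t=\eta_t(x_1)$ a cutoff equal to $1$ on $[-t,t]$. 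Setting $Y(t)=\|\nabla\Bv_k\|_{L^2(\Omega_t)}^2$, integrating by parts, and estimating the commutator terms (supported on $t\le|x_1|\le t+1$) by $Y(t+1)-Y(t)$ together with transverse Poincar\'e/Hardy inequalities at scale $f$ and the two properties of $\Ba$, one arrives at an inequality $Y(t)\le C(\Phi)\bigl(1+\int_{-t}^t f^{-3}\bigr)+\Theta(t)\bigl(Y(t+1)-Y(t)\bigr)$ with $\Theta$ controlled by the local geometry (again via \eqref{assumpf''}). A standard ODE comparison yields \eqref{1-9} uniformly in $k$; extracting a weak $H^1_{\mathrm{loc}}(\Omega)$ limit along a subsequence and diagonalizing produces the weak solution $\Bu=\Ba+\Bv\in H_\sigma(\Omega)$ obeying \eqref{1-9}, since $\|\nabla\Bu\|_{L^2(\Omega_t)}\le\|\nabla\Bv\|_{L^2(\Omega_t)}+\|\nabla\Ba\|_{L^2(\Omega_t)}$. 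One checks along the way that for small $\Phi$ the constant $\widetilde C$ in \eqref{1-9} is $O(\Phi^2)$, which is needed for part (ii).

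For uniqueness, let $\Bu_1,\Bu_2$ satisfy \eqref{weak solution} and \eqref{1-9}, put $\Bw=\Bu_1-\Bu_2=\nabla^\perp\phi$ with $\phi$ the single-valued, $\partial\Omega$-vanishing stream function, and test the difference equation with $\nabla^\perp(\phi\,\eta_t^2)$. The destabilizing term $\int\eta_t^2\,\Bw\cdot\nabla\Bu_2\cdot\Bw\,dx$ splits into a part with $\nabla\Ba$, bounded by $\ve\int\eta_t^2|\nabla\Bw|^2$ via the Leray--Hopf property, and a part with $\nabla\Bv_2$, bounded on unit cells by the two-dimensional Ladyzhenskaya inequality together with \eqref{1-9} (and $\widetilde C=O(\Phi^2)$) and transverse Poincar\'e, giving a coefficient $O(\Phi)$; both are absorbed once $\Phi<\Phi_2$. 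What remains is $Z(t)\le(\text{remainder on } t\le|x_1|\le R)$ for $Z(t)=\|\nabla\Bw\|_{L^2(\Omega_t)}^2$, the remainder being controlled in terms of $\nabla\eta_t$, the local values of $f$ and $f'$, and $\phi$ (with $\|\phi\|_{L^\infty(\Sigma(x_1))}\lesssim f^{1/2}\|\nabla\Bw(x_1,\cdot)\|_{L^2}$ by transverse Poincar\'e). Choosing the transition region of $\eta_t$ appropriately this becomes a differential inequality for $Z$: under \eqref{1-16}, the divergence of $\int f^{-3}$ together with $f'\to0$ drives the Gronwall-type comparison to $Z\equiv0$, while under \eqref{1-17} the finiteness of $\int_t^{\pm\infty}f^{-3}$ together with $\sup_{\pm\tau\ge|t|}f'(\tau)=o\bigl((\int_t^{\pm\infty}f^{-3})^{1/2}\bigr)$ makes the remainder vanish as $R\to\infty$, again forcing $Z\equiv0$, hence $\Bu_1=\Bu_2$.

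I expect the main obstacle to be the $k$-uniform a priori estimate with the sharp right-hand side $\int_{-t}^t f^{-3}$: constructing the variable-width Leray--Hopf carrier and carrying out the transverse Hardy/Poincar\'e estimates at the $x_1$-dependent scale $f(x_1)$ so that the Saint--Venant inequality closes, where \eqref{assumpf''} is indispensable. For uniqueness the delicate point will be verifying that the cutoff remainder genuinely tends to zero under the borderline hypotheses \eqref{1-16}--\eqref{1-17}; these are calibrated precisely to the competition between the growth of the Dirichlet integral (like $\int f^{-3}$) and the decay of $f'$, and it is there that the exact form of the two conditions is forced.
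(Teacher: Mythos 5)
Your existence argument follows essentially the paper's route (Leray--Hopf flux carrier with logarithmic profile at transverse scale $f$, Leray--Schauder on truncated domains, Saint--Venant differential inequality, comparison lemma), with two genuine variations: you test with $\nabla^\perp(\phi\,\eta_t^2)$ to avoid the pressure, whereas the paper tests with $\zeta\Bv$ and controls the pressure through the Bogovskii map on layers of bounded aspect ratio; and you use unit-width transition layers, whereas the paper reparametrizes by $k(t)=\int_0^t f^{-5/3}$ and takes layers of width $\beta^* f$, precisely so that the Poincar\'e, Sobolev and Bogovskii constants (which scale like powers of $f$) are compensated by $|\partial_{x_1}\zeta|\sim f^{-1}$ and the coefficient of $[y']^{3/2}$ becomes an absolute constant. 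With unit layers these constants grow with $f(t)$, so your inequality $Y(t)\le C(1+\int_{-t}^t f^{-3})+\Theta(t)\,(Y(t+1)-Y(t))$ has a $t$-dependent $\Theta$ and the comparison step is exactly the point you leave unverified; it may be repairable, but it is where the paper's adapted layers do real work.

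The uniqueness part, however, has a genuine gap. You propose to absorb the bulk term $\int\eta_t^2\,\Bw\cdot\nabla\Bv_2\cdot\Bw\,dx$ using only the growth estimate \eqref{1-9} (with $\widetilde C=O(\Phi^2)$), claiming an $O(\Phi)$ coefficient "on unit cells". This does not close when $f$ is unbounded: if the cells are full cross-sectional slabs, then $\|\Bw\|_{L^4}^2\lesssim M_4^2\|\nabla\Bw\|_{L^2}^2$ with $M_4^2\sim f(x_1)$ (Lemma \ref{lemmaA2}), so the coefficient is of order $\Phi\, f(x_1)\|\nabla\Bv_2\|_{L^2(\mathrm{slab})}/\Phi$, i.e.\ $O(\Phi f)$, which is not uniformly small no matter how small $\Phi$ is; if instead the cells are unit squares in the interior, $\Bw$ has no boundary values there and $\|\Bw\|_{L^4}$ is not controlled by $\|\nabla\Bw\|$ alone, and restoring $\|\Bw\|_{L^2}$ through the transverse Poincar\'e inequality again costs a factor $f$ (Lemma \ref{lemmaA1}). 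Moreover, under \eqref{1-16} the integral $\int f^{-3}$ diverges, so \eqref{1-9} does not even give a bounded, let alone small, local Dirichlet norm. What is missing is the refined local decay estimate
\begin{equation*}
\|\nabla \Bu\|_{L^2(\Omega_{t-\beta^* f(t),\,t})}^2\leq \frac{C}{f^2(t)},
\end{equation*}
which the paper proves first (Propositions \ref{decay rate-right} and \ref{decay rate-left}) by a second, more delicate comparison argument, and it is precisely there --- in verifying $\varphi\ge 2\Psi(t,\varphi')$ for the comparison function $\varphi\sim\int f^{-3}+f^{-2}(T)$ --- that the hypotheses \eqref{1-16}--\eqref{1-17} are used, through the smallness of $\sup_{\tau\ge T_0}|f'|$ relative to the tail of $\int f^{-3}$. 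With this decay in hand, the per-block coefficient becomes $f\cdot f^{-1}=O(1)$ with a constant vanishing as $\Phi\to0$, and the absorption works; without it, your Gronwall scheme cannot start, and your reading of \eqref{1-16}--\eqref{1-17} as conditions only ensuring that the cutoff remainder vanishes misplaces their actual role.
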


There are some remarks in order.
\begin{remark}
If $f(t)$ is a power function at far field, the conditions \eqref{1-16}-\eqref{1-17} are equivalent to
\begin{equation*}
	f(t)=o(t^\frac35)\ \ \ \ \text{ as }|t|\to\infty.
\end{equation*}
\end{remark}

\begin{remark}
It should be emphasized that there is no restriction on the flux $\Phi$ for the existence of solutions in both Theorems \ref{bounded channel} and \ref{unbounded channel}.
\end{remark}

\begin{remark} In certain sense, the estimates \eqref{1-4} and \eqref{1-9} are optimal, as there exists a constant $C>0$ such that
\begin{equation}\label{optimalest}
\Phi^2  \int_{-t}^t f^{-3} (x_1) \, dx_1 \leq C\|\nabla \Bu\|_{L^2(\Omega_t)}^2.
\end{equation}
 Indeed, for any flow $\Bu$ with flux $\Phi$, one uses Poincar\'{e}'s inequality to obtain  
\begin{equation*}\begin{aligned}
		\Phi^2= \left|\int_{\Sigma(x_1)}\Bu\cdot \Bn\,ds\right|^2\leq |\Sigma(x_1)|\int_{\Sigma(x_1)}|\Bu|^2\,dx_2  \leq C|f (x_1)|^3\int_{\Sigma(x_1)}|\nabla \Bu|^2\,dx_2.
\end{aligned}\end{equation*}
Integrating this inequality with respect to $x_1$ over $(-t,t)$, one has \eqref{optimalest}.
\end{remark}

Furthermore, we can obtain the pointwise decay rate of the solution $\Bu$ obtained in Theorem \ref{unbounded channel}.
\begin{theorem}\label{decay rate}
	Let $\Bu=\Bv+\Bg$ be the solution obtained in Theorem \ref{unbounded channel}. Assume further that either \eqref{1-16} or \eqref{1-17} holds. Then one has 
	\[|\Bu(x)|\leq \frac{C}{f(x_1)} \text{ for any }x\in \Omega,\]
	where $C$ is a constant depending only on $\Omega$ and $\Phi$.
\end{theorem}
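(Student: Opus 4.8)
The plan is to convert the already-established global Dirichlet estimate \eqref{1-9} into a pointwise bound by a localization argument on cross-sectional strips. Fix a point $x=(x_1,x_2)\in\Omega$ and set $\ell = f(x_1)$, the local width. The idea is to work on a neighborhood of $\Sigma(x_1)$ whose longitudinal extent is comparable to $\ell$ — say the subdomain $Q = \Omega \cap \{ |y_1 - x_1| < \ell \}$ — and to show that the $H^2$ norm of $\Bu$ on $Q$, after rescaling to a unit-size domain, is controlled by the local Dirichlet integral $\int_Q |\nabla\Bu|^2\,dy$ divided by an appropriate power of $\ell$. Then Sobolev embedding $H^2 \hookrightarrow L^\infty$ in two dimensions gives the pointwise bound, and the scaling bookkeeping is what produces the factor $f(x_1)^{-1}$.

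In detail, I would first observe that the local Dirichlet integral is itself under control: from \eqref{1-9} (and its one-sided analogues, which follow from the same energy argument), together with the hypotheses \eqref{1-16} or \eqref{1-17} which guarantee $f'$ decays and $f$ does not oscillate wildly, one deduces
\[
\int_{Q} |\nabla \Bu|^2 \, dy \le C,
\]
with $C$ independent of $x_1$. Roughly, \eqref{1-9} bounds the Dirichlet integral over $\Omega_t$ by $1 + \int_{-t}^t f^{-3}$, and the increment of the right-hand side over an interval of length $2\ell = 2f(x_1)$ centered at $x_1$ is $O(f(x_1)\cdot f(x_1)^{-3}\cdot f(x_1)) = O(f(x_1)^{-1})$ only if $f$ is slowly varying; more carefully one should use that the increment of $\int f^{-3}$ over $(x_1-\ell,x_1+\ell)$ is comparable to $\ell \cdot f(x_1)^{-3} = f(x_1)^{-2}$ — wait, this is bounded, but we actually want more: we need the local increment to stay $O(1)$, which it does since $f(x_1)^{-2} \le \underline d^{-2}$. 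So the local Dirichlet energy is uniformly bounded.

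Next comes the rescaling. Introduce $y = x_1 + \ell z_1$, $x_2 = \bar f(x_1) + \ell z_2$ (recentering by $\bar f$ as in \eqref{deffbar}), so that $Q$ maps to a domain $\hat Q$ of unit size with $C^2$ boundary whose geometry is controlled uniformly in $x_1$ thanks to \eqref{assumpf} and \eqref{assumpf''}; note \eqref{assumpf''} is exactly the condition making $f_i''\ell$ bounded, i.e. the rescaled boundary has bounded curvature. Set $\hat\Bu(z) = \ell\,\Bu(x_1 + \ell z_1, \bar f + \ell z_2)$ and $\hat p(z) = \ell^2 p(\cdot)$; then $(\hat\Bu,\hat p)$ solves a Navier--Stokes-type system on $\hat Q$ with the same structure (the scaling of the nonlinear term works out precisely because of the choice $\hat\Bu = \ell\Bu$), and $\|\nabla_z\hat\Bu\|_{L^2(\hat Q)}^2 = \int_Q|\nabla\Bu|^2\,dy \le C$. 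By the Poincaré and interior-plus-boundary $H^2$ regularity estimates for the steady Navier--Stokes system on the fixed-geometry domain $\hat Q$ (using that the $L^2$-norm of $\hat\Bu$ and hence the $L^4$-norm, hence the forcing $\hat\Bu\cdot\nabla\hat\Bu$ in $L^2$, are all controlled by the bounded Dirichlet norm in 2D), one gets $\|\hat\Bu\|_{H^2(\hat Q')} \le C$ on a slightly smaller $\hat Q'$ still containing the image of $x$. Sobolev embedding yields $|\hat\Bu(z)|\le C$, i.e. $\ell|\Bu(x)|\le C$, which is the claim.

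The main obstacle I expect is the $H^2$ regularity step: one must push the a priori estimate from $\|\nabla\Bu\|_{L^2}$ up two derivatives on the rescaled domain with constants that are genuinely uniform in $x_1$, which requires (i) verifying that the rescaled domains $\hat Q$ form a family with uniformly controlled $C^2$ charts — this is where \eqref{assumpf''} enters essentially — and (ii) handling the nonlinearity, which in 2D is subcritical so the bootstrap $\nabla\Bu\in L^2 \Rightarrow \Bu\in L^4 \Rightarrow \Bu\cdot\nabla\Bu\in L^{4/3} \Rightarrow$ (Stokes regularity) $\Bu\in W^{2,4/3}\hookrightarrow L^\infty$ closes, though one may prefer to iterate once more to land cleanly in $H^2$. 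A secondary technical point is that near the two "ends" of the global channel, or wherever $f$ changes rapidly, one should double-check that the local interval $(x_1-\ell, x_1+\ell)$ is short enough that $f$ does not vary by more than a fixed factor on it; this follows from $\|f_i'\|_{C(\R)} = \beta < \infty$ in \eqref{assumpf}, since then $|f(y_1) - f(x_1)| \le 2\beta\ell = 2\beta f(x_1)$ — which is only useful when $\beta$ is small, so in general one should instead choose the longitudinal half-width to be $c\min(f(x_1), 1)$ or simply a fixed small constant times $\underline d$, and re-examine the scaling; I would sort out this constant at the start so the rest of the argument runs on a genuinely fixed-geometry rescaled domain.
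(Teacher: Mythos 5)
Your overall strategy (localize on a strip of longitudinal extent comparable to $f(x_1)$, rescale by $r\sim f(x_1)$ with $\Bu^r=r\Bu(x^*+r\,\cdot)$, apply Stokes/Navier--Stokes regularity on the unit-scale domain, and conclude by Sobolev embedding) is exactly the paper's strategy, but your energy input is too weak and the scaling identity you rely on is wrong, so the argument as written does not produce decay. In two dimensions, with $\hat\Bu=\ell\,\Bu(x_1+\ell z_1,\cdot)$ one has $\|\nabla_z\hat\Bu\|_{L^2(\hat Q)}^2=\ell^2\int_Q|\nabla\Bu|^2\,dy$, not $\int_Q|\nabla\Bu|^2\,dy$: the plain Dirichlet integral is scale invariant in 2D, and the extra factor $\ell$ in the velocity normalization costs $\ell^2$. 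Consequently, the bound you extract from \eqref{1-9}, namely $\int_Q|\nabla\Bu|^2\,dy\le C$ (which indeed follows since the increment of $\int f^{-3}$ over an interval of length $\sim f(x_1)$ is $O(f^{-2})\le \underline d^{-2}$), only gives $\|\nabla\hat\Bu\|_{L^2(\hat Q)}\le C\ell$; chasing this through the regularity-plus-embedding step yields $\ell\,|\Bu(x)|\le C\ell^{2}$, i.e. $|\Bu(x)|\le Cf(x_1)$ — growth, not the claimed $C/f(x_1)$. A symptom of the same problem is that your proposal never genuinely uses \eqref{1-16} or \eqref{1-17}; they enter only as a vague "no wild oscillation" remark, whereas the theorem is false without a quantitative input from them.

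The missing ingredient is the refined local energy decay that the paper proves first (Propositions \ref{decay rate-right} and \ref{decay rate-left}): under \eqref{1-16} or \eqref{1-17} one has $\|\nabla\Bu\|_{L^2(\Omega_{t-\beta^*f(t),\,t})}^2\le C f^{-2}(t)$, i.e. the Dirichlet integral on a strip of length $\sim f$ is not merely $O(1)$ but $O(f^{-2})$. This is obtained by a second application of the Ladyzhenskaya--Solonnikov differential-inequality technique with truncation functions built from the ODEs \eqref{4-26} adapted to $f$, and it is exactly where \eqref{1-16}/\eqref{1-17} are used. With this input the scaling bookkeeping closes: $r\|\nabla\Bu\|_{L^2(\text{strip})}\le C$ and $\|\Bu\|_{L^2(\text{strip})}\le C$ (via Lemma \ref{lemmaA1} for $\Bv$ and Lemma \ref{lemma1} for $\Bg$), so the rescaled solution has uniformly bounded $H^1$ norm and the $W^{2,4/3}\hookrightarrow L^\infty$ bootstrap gives $r\|\Bu\|_{L^\infty}\le C$, i.e. $|\Bu(x)|\le C/f(x_1)$. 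You would need to supply this decay estimate (or an equivalent) before your localization can work. Two further points to repair: your fallback of taking a fixed $O(\underline d)$ longitudinal window is not viable (after rescaling by $1/f$ the window degenerates); the correct choice is half-length $\beta^* f(x_1)=f(x_1)/(4\beta)$, for which \eqref{4-17} guarantees $f$ varies by at most a factor $3/2$ regardless of the size of $\beta$. And the uniform boundary regularity you invoke in one line is nontrivial: the paper's Proposition \ref{boundary decay} needs a rotation, the implicit function theorem with constants controlled via \eqref{assumpf}--\eqref{assumpf''}, boundary flattening, and the half-space Stokes estimates of Lemma \ref{bpm} to make the constants independent of $x_1$.
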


\section{Preliminaries}\label{secpreliminary}
In this section, some elementary but important lemmas are collected. We first give the Poincar\'e type inequality and Sobolev embedding inequality in channels, which can be proved in a similar way to that in \cite[Section 2.]{SWX22}.
\begin{lemma}\label{lemmaA1}
For any $\Bv \in H^1(\Omega_{a,b})$ satisfying $\Bv=0$ on the boundary $\partial\Omega\cap \partial\Omega_{a,b}$, one has
\begin{equation}\label{A1-0}
	\left\|\frac{v_1}{f}\right\|_{L^2(\Omega_{a,b})}\leq M_0 \left\|\partial_{x_2} v_1\right\|_{L^2(\Omega_{a,b})}
\end{equation}
and
\begin{equation}
\left\|\Bv\right\|_{L^2(\Omega_{a,b})}\leq M_1(\Omega_{a,b}) \left\|\nabla\Bv\right\|_{L^2(\Omega_{a,b})},
\end{equation}
 where $M_0$ is a uniform constant independent of the domain $\Omega_{a,b}$ and
 \begin{equation}\label{defM1}
 M_1(\Omega_{a,b})=C\|f\|_{L^\infty(a, b)}
 \end{equation}
 with a universal constant $C$.
\end{lemma}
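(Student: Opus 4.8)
The plan is to reduce both inequalities to the one–dimensional Poincar\'e inequality on the vertical cross–sections $\Sigma(x_1)=\{x_2:\,f_1(x_1)<x_2<f_2(x_1)\}$, which have length exactly $f(x_1)=f_2(x_1)-f_1(x_1)$, and then integrate the resulting slice estimates in $x_1$. The crucial structural observation is that $\partial\Omega\cap\partial\Omega_{a,b}$ consists precisely of the two graph pieces $\{x_2=f_i(x_1):\,a<x_1<b\}$, $i=1,2$, but \emph{not} of the two lateral sections $\Sigma(a),\Sigma(b)$; hence the hypothesis $\Bv=0$ on $\partial\Omega\cap\partial\Omega_{a,b}$ says exactly that the trace of $\Bv$ vanishes at both endpoints of a.e.\ vertical slice, which is what a slicewise Poincar\'e argument in the $x_2$–variable requires. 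To make the slicing rigorous for $H^1$ functions I would first establish the estimates for $\Bv\in C^\infty(\overline{\Omega_{a,b}})$ vanishing in a neighborhood of $\partial\Omega\cap\partial\Omega_{a,b}$ (such functions being dense in the space under consideration, e.g.\ after flattening the two graph boundaries by the change of variables $y_2=(x_2-\bar f(x_1))/f(x_1)$), and then pass to the limit; alternatively one invokes Fubini together with the fact that for a.e.\ $x_1$ the restriction $v_i(x_1,\cdot)$ lies in $H^1_0(\Sigma(x_1))$.

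For the first inequality \eqref{A1-0}: for a.e.\ $x_1\in(a,b)$ and $x_2\in\Sigma(x_1)$ the fundamental theorem of calculus gives $v_1(x_1,x_2)=\int_{f_1(x_1)}^{x_2}\partial_{x_2}v_1(x_1,s)\,ds$, so by Cauchy--Schwarz
\[
|v_1(x_1,x_2)|^2\le f(x_1)\int_{\Sigma(x_1)}|\partial_{x_2}v_1(x_1,s)|^2\,ds .
\]
Integrating in $x_2$ over $\Sigma(x_1)$ yields $\int_{\Sigma(x_1)}|v_1|^2\,dx_2\le f(x_1)^2\int_{\Sigma(x_1)}|\partial_{x_2}v_1|^2\,dx_2$; dividing by $f(x_1)^2$ (constant on the slice) and integrating in $x_1$ over $(a,b)$ gives \eqref{A1-0} with $M_0=1$ — an absolute constant, independent of $a,b$ and of $\Omega$. (One may replace $1$ by the sharp constant $2/\pi$ if desired, but the crude bound already suffices.)

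For the second inequality: I would run the same slicewise argument for \emph{each} component $v_i$, $i=1,2$, using only that $v_i$ vanishes at the endpoints of $\Sigma(x_1)$, to get $\int_{\Sigma(x_1)}|v_i|^2\,dx_2\le f(x_1)^2\int_{\Sigma(x_1)}|\partial_{x_2}v_i|^2\,dx_2\le \|f\|_{L^\infty(a,b)}^2\int_{\Sigma(x_1)}|\partial_{x_2}v_i|^2\,dx_2$. Integrating in $x_1$ and summing over $i=1,2$ gives $\|\Bv\|_{L^2(\Omega_{a,b})}^2\le \|f\|_{L^\infty(a,b)}^2\|\partial_{x_2}\Bv\|_{L^2(\Omega_{a,b})}^2\le \|f\|_{L^\infty(a,b)}^2\|\nabla\Bv\|_{L^2(\Omega_{a,b})}^2$, i.e.\ $M_1(\Omega_{a,b})=\|f\|_{L^\infty(a,b)}$ (absorbing any numerical factor into the universal constant $C$ of \eqref{defM1}). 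Note that here one must \emph{not} use the horizontal direction, since $\Bv$ need not vanish on $\Sigma(a)\cup\Sigma(b)$; restricting to vertical slices is exactly what keeps the constant proportional to $\|f\|_{L^\infty(a,b)}$ rather than to $b-a$.

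The only genuinely non-routine point is the justification of the slicing/trace step for $H^1$ functions on the curved domain $\Omega_{a,b}$; I expect this to be the main obstacle, and I would dispose of it once and for all via the boundary-flattening diffeomorphism $(x_1,x_2)\mapsto (x_1,(x_2-\bar f(x_1))/f(x_1))$, under which $\Omega_{a,b}$ becomes the straight strip $(a,b)\times(-\tfrac12,\tfrac12)$, the hypothesis becomes vanishing on $(a,b)\times\{\pm\tfrac12\}$, and the Jacobian and its inverse are bounded above and below in terms of $\underline d$ and $\beta$ from \eqref{assumpf}; smooth functions vanishing near the top and bottom are dense there, and everything above then follows by the elementary estimates together with the change of variables. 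Once \eqref{A1-0} and the second inequality are proved for smooth functions, density closes the argument for all admissible $\Bv\in H^1(\Omega_{a,b})$.
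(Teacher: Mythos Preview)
Your proof is correct and is exactly the standard slicewise Poincar\'e argument one expects here; the paper itself does not give a proof but simply cites \cite{SWX22}, and your approach is the natural one that reference would contain. Your remark that only the vertical direction may be used (since $\Bv$ need not vanish on $\Sigma(a)\cup\Sigma(b)$) is precisely the point, and the density/flattening justification you outline is the right way to make the slicing rigorous for $H^1$ functions.
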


\begin{lemma}\label{lemmaA2} For any $\Bv \in H^1(\Omega_{a,b})$ satisfying $\Bv=0$ on $\partial\Omega_{a,b}\cap \partial\Omega$, one has
\begin{equation*}
\|\Bv\|_{L^4(\Omega_{a,b})}\leq M_4(\Omega_{a,b}) \| \nabla \Bv\|_{L^2(\Omega_{a,b})},
\end{equation*}
where
\begin{equation}\label{defM4}
M_4(\Omega_{a,b})=C\left[ (b-a)^{-1} M _1(\Omega_{a,b})+1 \right]^\frac12|\Omega_{a,b}|^\frac14
\end{equation}
with a universal constant $C$ and $M_1=M_1(\Omega_{a,b})$ defined in \eqref{defM1}.
\end{lemma}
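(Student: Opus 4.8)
The plan is to prove the Ladyzhenskaya-type inequality by interpolating between the $L^2$ bound of Lemma \ref{lemmaA1} and an $L^2$ bound for the gradient, using the classical two-dimensional Gagliardo--Nirenberg argument adapted so that the constant is tracked explicitly in terms of the geometry of $\Omega_{a,b}$. First I would recall the elementary identity for a smooth function $w$ with compact support (in the appropriate directions), namely that for $w$ vanishing on $\partial\Omega\cap\partial\Omega_{a,b}$ one can write, for $x=(x_1,x_2)\in\Omega_{a,b}$,
\[
w^2(x_1,x_2)=2\int_{f_1(x_1)}^{x_2} w\,\partial_{x_2}w\,d\tau\leq 2\int_{\Sigma(x_1)}|w|\,|\partial_{x_2}w|\,d\tau,
\]
and similarly, writing $w(x_1,x_2)$ as an integral in the $x_1$-direction from a point where it may not vanish, one gets a bound involving both $\partial_{x_1}w$ and an average of $w$ over an $x_1$-line. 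The standard Ladyzhenskaya trick is to estimate $\|w\|_{L^4}^4=\int w^4\,dx$ by $\sup_{x_1}\big(\int_{\Sigma(x_1)} w^2\,d\tau\big)\cdot\sup_{x_2}\big(\int w^2\,dx_1\big)$ and then apply the one-dimensional Agmon/Sobolev estimate in each direction.

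The key steps, in order, would be: (1) Reduce to a dense class of smooth functions by approximation. (2) Apply the product decomposition above: for each fixed $x_1$, bound $\int_{\Sigma(x_1)}w^2\,d\tau$ by $2\|w\|_{L^2(\Sigma(x_1))}\|\partial_{x_2}w\|_{L^2(\Sigma(x_1))}$ using that $w$ vanishes on $\partial\Omega\cap\partial\Omega_{a,b}$ (so the $x_2$-integration starts from zero); for each fixed $x_2$, bound $\int w^2\,dx_1$ by the average over $(a,b)$ plus $2(b-a)^{-1/2}$ times a Cauchy--Schwarz term, or more directly by $C\big[(b-a)^{-1}\|w\|_{L^2}^2+\|w\|_{L^2}\|\partial_{x_1}w\|_{L^2}\big]$ along the slice — here the $(b-a)^{-1}$ appears precisely because $w$ need not vanish on the cross-sections $\Sigma(a),\Sigma(b)$, which is why the term $(b-a)^{-1}M_1$ shows up in \eqref{defM4}. (3) Combine the two one-dimensional estimates by integrating the product bound, using Cauchy--Schwarz in $x_1$ (resp.\ $x_2$), to get
\[
\|w\|_{L^4(\Omega_{a,b})}^4\leq C\,\|w\|_{L^2(\Omega_{a,b})}^2\Big[(b-a)^{-1}\|w\|_{L^2(\Omega_{a,b})}^2+\|w\|_{L^2(\Omega_{a,b})}\|\nabla w\|_{L^2(\Omega_{a,b})}\Big]^{?}
\]
— more carefully, the correct combination yields $\|w\|_{L^4}^4\le C\|\partial_{x_2}w\|_{L^2}\|w\|_{L^2}\big[(b-a)^{-1}\|w\|_{L^2}^2+\|w\|_{L^2}\|\partial_{x_1}w\|_{L^2}\big]$. (4) Invoke Lemma \ref{lemmaA1} to replace every bare $\|w\|_{L^2(\Omega_{a,b})}$ by $M_1(\Omega_{a,b})\|\nabla w\|_{L^2(\Omega_{a,b})}$ (using \eqref{A1-0} componentwise for the $v_1$ part and the second inequality of Lemma \ref{lemmaA1} for the vector field), collect powers of $\|\nabla w\|_{L^2}$, and extract the fourth root. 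Keeping track of the dependence on $|\Omega_{a,b}|$ requires using $\|w\|_{L^2}\le |\Omega_{a,b}|^{1/4}\|w\|_{L^4}$ at one point (Hölder) so that the $|\Omega_{a,b}|^{1/4}$ factor in \eqref{defM4} emerges after rearrangement — alternatively one interpolates $\|w\|_{L^4}\lesssim \|w\|_{L^2}^{1/2}\|\nabla w\|_{L^2}^{1/2}\cdot(\text{geometric factor})$ and converts the $\|w\|_{L^2}^{1/2}$ into $|\Omega_{a,b}|^{1/8}\|w\|_{L^4}^{1/2}$ times lower-order pieces.

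The main obstacle will be the bookkeeping in step (3)--(4): one has to be careful that the non-vanishing of $w$ on the two end cross-sections $\Sigma(a)$ and $\Sigma(b)$ is handled by the $(b-a)^{-1}$ term and does \emph{not} force any additional boundary contribution, and that the final constant genuinely has the stated form $C[(b-a)^{-1}M_1(\Omega_{a,b})+1]^{1/2}|\Omega_{a,b}|^{1/4}$ rather than something with worse powers. A clean way to organize this is: first prove the scale-invariant inequality $\|w\|_{L^4(\Omega_{a,b})}^2\le C\|w\|_{L^2(\Omega_{a,b})}\big(\|\nabla w\|_{L^2(\Omega_{a,b})}+(b-a)^{-1/2}\|w\|_{L^2(\Omega_{a,b})}\big)$, which is the honest 2D Ladyzhenskaya inequality on a domain of "width'' $b-a$ in the $x_1$-direction with Dirichlet data only on the lateral boundary; then feed in $\|w\|_{L^2}\le M_1\|\nabla w\|_{L^2}$ and $\|w\|_{L^2}\le|\Omega_{a,b}|^{1/4}\|w\|_{L^4}$, and solve the resulting inequality for $\|w\|_{L^4}$. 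I expect the dependence on $\Omega$ to enter only through $\|f\|_{L^\infty(a,b)}$ (via $M_1$) and $|\Omega_{a,b}|$, matching \eqref{defM4}, and this follows the template of \cite[Section 2]{SWX22} cited for the preceding lemmas.
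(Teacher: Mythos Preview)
The paper does not give its own proof of this lemma; it simply refers to \cite[Section~2]{SWX22}. Your plan --- the classical two-dimensional Ladyzhenskaya product decomposition, handling the $x_2$-direction via the vanishing of $w$ on the curved boundary and the $x_1$-direction via a mean-value term that produces the factor $(b-a)^{-1}$, then converting the remaining $\|w\|_{L^2}$ into $|\Omega_{a,b}|^{1/4}\|w\|_{L^4}$ by H\"older --- is the natural argument and almost certainly coincides with the one omitted here.

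One small correction in your bookkeeping: the intermediate ``clean'' inequality you write near the end should read
\[
\|w\|_{L^4(\Omega_{a,b})}^2\le C\,\|w\|_{L^2(\Omega_{a,b})}\bigl(\|\nabla w\|_{L^2(\Omega_{a,b})}+(b-a)^{-1}\|w\|_{L^2(\Omega_{a,b})}\bigr),
\]
with $(b-a)^{-1}$ rather than $(b-a)^{-1/2}$; a dimension check (or simply squaring your step-(3) bound and using $ab\le\tfrac12(a^2+b^2)$) confirms this. With that fix, applying $\|w\|_{L^2}\le M_1\|\nabla w\|_{L^2}$ inside the parenthesis and $\|w\|_{L^2}\le |\Omega_{a,b}|^{1/4}\|w\|_{L^4}$ on the prefactor, then cancelling $\|w\|_{L^4}$, yields exactly the constant in \eqref{defM4}.
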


The following lemma on the solvability of the divergence equation is used to obtain the estimates involving pressure. For the proof, one may refer to \cite[Theorem  \uppercase\expandafter{\romannumeral3}.3.1 ]{Ga} and \cite{Bo}.
\begin{lemma}\label{lemmaA5}
Let $D \subset \R^n$ be a bounded Lipschitz domain. Then there exists a constant $M_5$ such that for any $w\in L_0^2(D)$, the  problem
\begin{equation}\label{A5-1}
\left\{\begin{aligned}
{\rm div}~\Ba=w ~~~~~~~~~~&\text{ in }D,\\
\Ba=0 ~~~~~~~~~~~~~~~~~&\text{ on }\partial D
\end{aligned}\right.
\end{equation}
has a solution $\Ba \in H^1_0(D)$ satisfying
\[
\|\nabla\Ba\|_{L^2(D)}\leq M_5(D)\|w\|_{L^2(D)}.
\]  
In particular, if the domain is of the form
\begin{equation*}
 D=\bigcup_{k=1}^N D_k,
\end{equation*}
where each $D_k$ is star-like with respect to some open ball $B_k$ with  $\overline{B_k}\subset D_k$, then the constant $M_5(D)$ admits the following estimate
\begin{equation}\label{A5-2}M_5(D)\leq C_D \left(\frac{R_0}{R}\right)^n\left(1+\frac{R_0}{R}\right).
\end{equation}
Here, $R_0$ is the diameter of the domain $D$, $R$ is the smallest radius of the balls $B_k$, and
\begin{equation}\label{A5-3}
C_D=\max_{1\leq k\leq N}\left(1+\frac{|D_k|^\frac12}{|\tilde{D}_k|^\frac12}\right)\prod_{i=1}^{k-1}\left(1+\frac{|\hat{D}_i\setminus D_i|^\frac12}{|\tilde{D}_i|^\frac12}\right),
\end{equation}
with $\tilde{D}_i=D_i\cap \hat{D}_i$ and $\hat{D}_i=\bigcup_{j=i+1}^N D_j$.
\end{lemma}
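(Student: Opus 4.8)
The plan is the classical Bogovskii construction: first solve \eqref{A5-1} on a single subdomain that is star-shaped with respect to a ball, then patch the pieces together by decomposing the datum $w$.

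\emph{Step 1 (one star-shaped piece).} Suppose $D$ is star-like with respect to a ball $B=B_R(x_0)$ with $\overline{B}\subset D$ and put $R_0=\mathrm{diam}\,D$. Fix a nonnegative $\omega\in C_c^\infty(B)$ with $\int\omega\,dy=1$; after rescaling we may assume $\|\omega\|_{L^\infty}\le cR^{-n}$ and $\|\nabla\omega\|_{L^\infty}\le cR^{-n-1}$. For $w\in C_c^\infty(D)$ define the Bogovskii operator
\begin{equation*}
(\mathcal B w)(x)=\int_D w(y)\,\frac{x-y}{|x-y|^n}\left(\int_{|x-y|}^{\infty}\omega\!\left(y+\xi\,\frac{x-y}{|x-y|}\right)\xi^{n-1}\,d\xi\right)dy.
\end{equation*}
Direct computation shows: $\mathcal B w\in C_c^\infty(D)$, so $\mathcal B w=0$ on $\partial D$; $\div(\mathcal B w)=w-\big(\int_D w\big)\omega$, so $\div(\mathcal B w)=w$ once $w\in L_0^2(D)$; and each entry $\partial_i(\mathcal B w)_j$ splits as a principal-value singular integral of convolution type plus a remainder with an integrable kernel. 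The singular part is a Calder\'{o}n--Zygmund operator, so its $L^2\to L^2$ norm is controlled by a constant depending only on $n$ times geometric factors; tracking how the kernel scales under $x\mapsto Rx$, together with $|x-y|\le R_0$ on $D\times D$ and the normalization of $\omega$, yields
\begin{equation*}
\|\nabla\mathcal B w\|_{L^2(D)}\le C(n)\Big(\tfrac{R_0}{R}\Big)^{n}\Big(1+\tfrac{R_0}{R}\Big)\|w\|_{L^2(D)}.
\end{equation*}
Since $C_c^\infty(D)\cap L_0^2(D)$ is dense in $L_0^2(D)$, $\mathcal B$ extends to a bounded map $L_0^2(D)\to H_0^1(D)$ with the same bound, which proves the lemma in this case.

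\emph{Step 2 (finite union).} Let $D=\bigcup_{k=1}^N D_k$ with each $D_k$ star-like with respect to a ball $B_k$ of radius $\ge R$ and $\overline{B_k}\subset D_k$, and set $R_0=\mathrm{diam}\,D$. Given $w\in L_0^2(D)$ I decompose $w=\sum_{k=1}^N w_k$ with $\supp w_k\subset\overline{D_k}$ and $\int_{D_k}w_k\,dx=0$ by the standard induction: writing $\hat D_k=\bigcup_{j>k}D_j$, $\tilde D_k=D_k\cap\hat D_k$, one sets $u_1=w$ and, for $1\le k\le N-1$,
\begin{equation*}
w_k=\mathbf{1}_{D_k}u_k-\Big(\int_{D_k}u_k\,dx\Big)\frac{\mathbf{1}_{\tilde D_k}}{|\tilde D_k|},\qquad u_{k+1}=u_k-w_k,
\end{equation*}
and $w_N=u_N$; one checks inductively that $u_k$ is supported in $\bigcup_{j\ge k}D_j$ with $\int u_k=0$, that $w_k$ is supported in $D_k$ with zero mean, and — accounting for the $L^2$ norms at each step by Cauchy--Schwarz — that $\|w_k\|_{L^2(D_k)}\le C_D\|w\|_{L^2(D)}$ with $C_D$ as in \eqref{A5-3}. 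Applying Step 1 to each $D_k$ (whose diameter is $\le R_0$ and whose inscribed ball has radius $\ge R$) produces $\Ba_k\in H_0^1(D_k)$ with $\div\Ba_k=w_k$ in $D_k$ and $\|\nabla\Ba_k\|_{L^2(D_k)}\le C(n)(R_0/R)^n(1+R_0/R)\|w_k\|_{L^2}$. Extending each $\Ba_k$ by zero and setting $\Ba=\sum_k\Ba_k$ gives $\Ba\in H_0^1(D)$ with $\div\Ba=\sum_k w_k=w$ and, after summing and absorbing the finitely many geometric factors into $C_D$, the estimate \eqref{A5-2}. Finally, a general bounded Lipschitz domain can be written as such a finite union, via a finite cover by patches on which $\partial D$ is a Lipschitz graph together with a standard construction; this yields the first assertion with some constant $M_5(D)$. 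Alternatively that assertion follows from the Ne\v{c}as inequality $\|p\|_{L^2(D)}\le C(D)\|\nabla p\|_{H^{-1}(D)}$ for $p\in L_0^2(D)$, which makes $\div:H_0^1(D)^n\to L_0^2(D)$ onto with a bounded right inverse.

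\emph{Main obstacle.} The delicate step is the explicit gradient bound in Step 1: one must combine a quantitative form of the $L^2$ Calder\'{o}n--Zygmund theorem for the singular part of $\partial_i(\mathcal B w)_j$ with a careful scaling analysis of the kernel built from $\omega$ in order to extract precisely the geometric factor $(R_0/R)^n(1+R_0/R)$. The patching in Step 2 is, by contrast, elementary bookkeeping. In practice one simply cites \cite[Theorem III.3.1]{Ga} and \cite{Bo} for these facts.
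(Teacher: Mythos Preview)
Your proposal is correct and follows exactly the standard Bogovskii construction from \cite[Theorem III.3.1]{Ga} and \cite{Bo}, which is precisely what the paper cites; in fact the paper gives no proof at all and simply refers to those references, so your sketch is more detailed than what appears there.
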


We next recall the estimates for some differential inequalities, whose proof can be found in \cite{LS}. These differential inequalities play crucial role in the estimates for local Dirichlet norm.
\begin{lemma}\label{lemmaA4}~
(1) Let $z(t)$ and $\varphi(t)$ be the nontrivial, nondecreasing, and nonnegative smooth functions. Suppose that $\Psi(t, s)$ is a monotonically increasing function with respect to $s$, equals to zero for $s=0$ and tends to $\infty$ as $s\to \infty$. Suppose that $\delta_1\in (0,1)$ is a fixed constant and for any  $t\in [t_0,T]$, $z(t)$ and $\varphi(t)$ satisfy
\begin{equation}\label{A4-1}
z(t)\leq \Psi(t, z'(t))+(1-\delta_1)\varphi(t)
\end{equation}
and
\begin{equation}\label{A4-2}
\varphi(t)\geq \delta_1^{-1}\Psi(t, \varphi'(t)).
\end{equation}
If $z(T) \leq \varphi(T)$, then
\begin{equation}\label{A4-3}
z(t)\leq \varphi(t)\text{ for any }t\in [t_0,T].
\end{equation}
(2) Assume that $\Psi(t, s) = \Psi(s)$ and the inequalities \eqref{A4-1} and \eqref{A4-2} are fulfilled for any $t\ge t_0$.  If
\begin{equation*}
  \liminf_{t\to \infty} \frac{z(t)}{\varphi(t)}<1\ \ \ \ \text{ or } \ \ \ \ \lim_{t\to \infty}\frac{z(t)}{\widetilde{z}(t)}=0
\end{equation*}
where $\widetilde{z}(t)$ is the positive solutions to the equation
\begin{equation*}
  \tilde{z}(t)=\delta_1^{-1}\Psi(\tilde{z}'(t)),
\end{equation*}
then \eqref{A4-3}
holds.

(3) Assume that $\Psi(t, s)= \Psi(s)$ and the function $z(t)$ is nontrivial and nonnegative. If there exist $m>1,t_0, s_1\ge 0,c_0>0$ such that
\begin{equation*}
  z(t)\leq \Psi(z'(t))~~~~\text{ for any } t\ge t_0
\end{equation*}
  and
\begin{equation*}
\Psi(s)\leq c_0 s^m \text{ for any }s \ge s_1,
\end{equation*}
then
\begin{equation*}
  \liminf_{t\to \infty} t^\frac{-m}{m-1}z(t)>0.
\end{equation*}
\end{lemma}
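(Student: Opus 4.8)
The plan is to reduce all three parts to a single ODE comparison obtained by inverting $\Psi$. Since $\Psi(t,\cdot)\colon[0,\infty)\to[0,\infty)$ is increasing, vanishes at $s=0$, and tends to $\infty$, it has an inverse $\Psi^{-1}(t,\cdot)$; set $h(t,y):=\Psi^{-1}(t,\delta_1 y)$, which is nondecreasing in $y$. The key observation is that on the set where $z(t)>\varphi(t)$ one has $(1-\delta_1)\varphi(t)<(1-\delta_1)z(t)$, so \eqref{A4-1} becomes $\delta_1 z(t)<\Psi(t,z'(t))$, i.e. $z'(t)>h(t,z(t))$, while \eqref{A4-2} is exactly $\varphi'(t)\le h(t,\varphi(t))$. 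Throughout I use that $z$ and $\varphi$ are nondecreasing, so $z',\varphi'\ge0$ and $h$ is only ever evaluated on $[0,\infty)$.

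For part (1) I would argue by contradiction. If the open set $A:=\{t\in[t_0,T]:z(t)>\varphi(t)\}$ is nonempty then, because $z(T)\le\varphi(T)$ forces $T\notin A$, every connected component $I$ of $A$ has a right endpoint $b\le T$ with $z(b)=\varphi(b)$ by continuity. On $I$ both inequalities above hold, so for $w:=z-\varphi$ one gets
\[
w'(t)=z'(t)-\varphi'(t)>h(t,z(t))-h(t,\varphi(t))\ge 0\qquad\text{on the interior of }I,
\]
the last step since $z(t)\ge\varphi(t)$ there and $h(t,\cdot)$ is nondecreasing. Thus $w$ is strictly increasing on $\overline I$; but $w>0$ on the interior of $I$ while $w(b)=0$, which is impossible. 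Hence $A=\emptyset$, which is \eqref{A4-3}.

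For part (2), where $\Psi$ is independent of $t$ and \eqref{A4-1}--\eqref{A4-2} hold for all $t\ge t_0$, I would first note that whenever $z(t_1)\le\varphi(t_1)$ for some $t_1>t_0$, part (1) applied on $[t_0,t_1]$ already yields $z\le\varphi$ on $[t_0,t_1]$; moreover the component argument of part (1) still rules out any component of $\{z>\varphi\}$ with finite right endpoint, so if this set is nonempty it contains a half line $(a,\infty)$ on which $z'>h(z)$ with $h=\Psi^{-1}(\delta_1\,\cdot)$. Under the first hypothesis, $\liminf_{t\to\infty}z/\varphi<1$ supplies arbitrarily large $t_1$ with $z(t_1)<\varphi(t_1)$, contradicting $z>\varphi$ on $(a,\infty)$. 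Under the second hypothesis, I would take $\tilde z$ to be the positive solution of $\tilde z'=h(\tilde z)$ passing through a point $(t_1,z(t_1))$ with $t_1>a$; since $z(t_1)=\tilde z(t_1)$ and $z'(t_1)>h(z(t_1))=\tilde z'(t_1)$, the ``no second crossing'' variant of the argument in part (1) (at a hypothetical later meeting point $t_2$ the function $z-\tilde z$ would be positive on $(t_1,t_2)$ with positive derivative yet vanish at both endpoints) gives $z>\tilde z$ on $(t_1,\infty)$, hence $z/\tilde z>1$ there, again a contradiction. In either case $z\le\varphi$ on $[t_0,\infty)$.

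For part (3) I would use that $z$ is nondecreasing (this holds in the intended application, where $z$ is a Dirichlet integral; in general $z\le\Psi(z')$, with $\Psi$ extended by $0$ to $(-\infty,0]$, already forces $z'>0$ wherever $z>0$, hence $z$ increases past its first positive value, which suffices). So $z(t)\ge\eta>0$ for large $t$, whence $z\le\Psi(z')$ gives $z'(t)\ge\Psi^{-1}(\eta)>0$, so $z(t)\to\infty$ and then $z'(t)\to\infty$; therefore eventually $z'(t)\ge s_1$ and $z(t)\le c_0 z'(t)^m$, i.e. $z'(t)\ge c_0^{-1/m}z(t)^{1/m}$. Integrating
\[
\frac{d}{dt}\,z(t)^{(m-1)/m}=\frac{m-1}{m}\,z(t)^{-1/m}z'(t)\ge\frac{m-1}{m}\,c_0^{-1/m}
\]
from a large $t_4$ gives $z(t)^{(m-1)/m}\ge c_1 t$ for large $t$ with some $c_1>0$, i.e. $z(t)\ge c_1^{m/(m-1)}t^{m/(m-1)}$, which yields $\liminf_{t\to\infty}t^{-m/(m-1)}z(t)>0$. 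The computations here, and the bookkeeping of components in (1), are routine; the main obstacle is making sense of $\Psi^{-1}$ and of the comparison for $\tilde z'=h(\tilde z)$ when $\Psi$ is only weakly increasing or non-Lipschitz — the ``no second crossing'' argument is designed precisely to need only monotonicity of $h$ and continuity — together with noting in part (2) that all positive solutions of $\tilde z=\delta_1^{-1}\Psi(\tilde z')$ are time translates of one another, hence of comparable growth, so that the condition $\lim z/\tilde z=0$ is insensitive to which one is chosen.
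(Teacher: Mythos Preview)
The paper does not give its own proof of this lemma; it simply refers to \cite{LS}. Your comparison argument via the inverse $h(t,y)=\Psi^{-1}(t,\delta_1 y)$ is the standard route and is correct. Two minor points are worth flagging. In part~(1), the strict inequality $z'(t)>h(t,z(t))$ relies on $\Psi(t,\cdot)$ being \emph{strictly} increasing (so that $\Psi(t,z')>\delta_1 z=\Psi(t,h(t,z))$ forces $z'>h(t,z)$); this is how ``monotonically increasing'' is to be read here and is satisfied in all the applications. In part~(2), second alternative, your reduction to a specific solution of $\tilde z'=h(\tilde z)$ through $(t_1,z(t_1))$ is fine, but the remark that time-translates of an autonomous solution are ``of comparable growth'' is not true for arbitrary $h$ (translates of super-exponentially growing solutions can have ratio tending to $0$ or $\infty$); it does hold for the $\Psi(s)=C(s+s^{3/2})$ actually used throughout the paper, where $\tilde z(t)\sim t^3$ and the ratio of any two translates tends to $1$. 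Part~(3) is clean.
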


\section{Flux carrier and the approximate problem}\label{secflux}
In this section, we construct the so-called flux carrier, which is a solenoidal vector field with flux $\Phi$ and satisfies no-slip boundary condition \eqref{BC}, and study the well-posedness of the approximate problem on bounded domain $\Omega_{a,b}$. 

In fact, the flux carrier is given in \cite{SWX22}, we nevertheless give the construction for the completeness.  Let $\mu(t)$ be a smooth function on $\R$ which satisfies
\begin{equation*}
	\mu(t)=\left\{
	\begin{aligned}
		&0 \,\,\,\,\,\,\text{ if } t\ge 1,\\
		&1 \,\,\,\,\,\,\text{ if } t\le 0.
	\end{aligned}
	\right.
\end{equation*}
For any $\varepsilon\in(0,1)$ to be determined, define
\begin{equation}\label{defg}
\Bg=(g_1,g_2)=(\partial_{x_2}G,-\partial_{x_1}G),
\end{equation}
where
\begin{equation*}
G(x_1,x_2;\varepsilon)=\left\{
\begin{aligned}&\Phi\mu\left(1+\varepsilon \ln \frac{f_2(x_1)-x_2}{x_2-\bar{f}(x_1)}\right),&\text{ if }x_2>\bar{f}(x_1),\\
&	0,&\text{ if }x_2\leq \bar{f}(x_1),
\end{aligned}\right.
\end{equation*}
with $\bar{f}$ defined in \eqref{deffbar}. Clearly, $\Bg$ is a smooth solenoidal vector field.

Noting
\begin{equation*}
G(x_1,x_2;\varepsilon)=\left\{
\begin{aligned}
&\Phi, &&\text{ if }x_2\text{ near }f_2(x_1),\\
&0, &&\text{ if }x_2\leq \bar{f}(x_1),
\end{aligned}
\right.
\end{equation*}
one can see that the vector field $\Bg$ vanishes near the boundary $\partial \Omega$ and satisfies the flux constraint \eqref{flux constraint}. Since $\operatorname{supp} \mu' \subset [0,1]$, one has
\begin{equation*}
\operatorname{supp} \Bg\subset \left\{ (x_1, x_2) \in \Omega:~e^{-\frac1\varepsilon}\leq \frac{f_2(x_1)-x_2}{x_2-\bar{f}(x_1)}\leq 1 \right\}.
\end{equation*}
This implies that for any $x\in \operatorname{supp}\Bg$, one has
\begin{equation}\label{1-6}
 f_2(x_1)-x_2 \leq  x_2-\bar{f}(x_1)\leq e^\frac1\varepsilon \left(f_2(x_1)-x_2\right).
\end{equation}
It also follows from \eqref{1-6} that for any $x\in \operatorname{supp}\Bg$, one has
\begin{equation*}
	2(x_2-\bar{f}(x_1))\ge f_2(x_1)-x_2+x_2-\bar{f}(x_1)=f_2(x_1)-\bar{f}(x_1)=\frac{f(x_1)}{2}
\end{equation*}
and
\begin{equation*}
	(1+e^{-\frac1\varepsilon})(x_2-\bar{f}(x_1))\leq  x_2-\bar{f}(x_1)+f_2(x_1)-x_2=f_2(x_1)-\bar{f}(x_1)=\frac{f(x_1)}{2},
\end{equation*}
where $f$ is defined in \eqref{deffbar}.
Hence, one has
\begin{equation}\label{1-7}
	\frac{f(x_1)}{4}\leq x_2-\bar{f}(x_1)\leq \frac{1}{1+e^{-\frac1\varepsilon}}\frac{f(x_1) }{2}\leq \frac{f(x_1)}{2}
\end{equation}
and
\begin{equation}\label{1-10}
	f_2(x_1)-x_2\ge e^{-\frac1\varepsilon }(x_2-\bar{f}(x_1)) \ge e^{-\frac1\varepsilon }\frac{f(x_1)}{4}.
   \end{equation}

Moreover, straightforward computations give
\begin{equation}\label{1-11-1}
\begin{aligned}
	g_1=&\Phi\partial_{x_2} \mu\left(1+\varepsilon \ln (f_2 (x_1)-x_2)- \varepsilon \ln(x_2-\bar{f}(x_1))\right)\\
	=&\varepsilon \Phi \mu'(\cdot) \left(\frac{-1}{f_2 (x_1)-x_2}-\frac{1}{x_2-\bar{f}(x_1)}\right)
\end{aligned}
\end{equation}
and
\begin{equation}\label{1-11-2}
\begin{aligned}
	g_2=&-\Phi\partial_{x_1} \mu\left(1+\varepsilon \ln (f_2 (x_1)-x_2)- \varepsilon \ln\left(x_2-\bar{f}(x_1)\right)\right)\\
=&-\varepsilon \Phi \mu'(\cdot) \left(\frac{f_2'(x_1)}{f_2 (x_1)-x_2}+\frac{\bar{f}'(x_1)}{x_2-\bar{f}(x_1)}\right),
\end{aligned}
\end{equation}
where $\mu'(\cdot)=\mu'\left(1+\varepsilon \ln (f_2(x_1)-x_2)- \varepsilon \ln \left(x_2-\bar{f}(x_1) \right)\right)$.

The following lemma collects some properties of the flux carrier $\Bg$, which play an important role in the construction of approximate solutions, especially when $\Phi$ is not small. One may refer to \cite{SWX22} for the detail of the proof.
\begin{lemma}\label{lemma1}
For any function $w\in H^1(\Omega_{a,b})$ satisfying $w=0$ on the boundary $\partial\Omega\cap \partial\Omega_{a,b}$, it holds that
\begin{equation*}
\int_{\Omega_{a,b}} \Bg^2w^2\,dx\leq C \Phi^2\varepsilon^2\int_{\Omega_{a,b}}|\partial_{x_2} w |^2\,dx.
\end{equation*}
 Moreover, if $f_i(i=1,2)$ satisfies \eqref{assumpf''}, then one has
\begin{equation*}
	|\Bg|\leq \frac{C(\varepsilon) \Phi}{f(x_1)},\ \ \ \ \ |\nabla\Bg|\leq \frac{C(\varepsilon, \gamma) \Phi}{f^2(x_1)},
\end{equation*}
and
\begin{equation*}
  \int_{\Omega_{a,b}}|\nabla \Bg|^2+|\Bg|^4\,dx\leq C(\epsilon, \gamma) ( \Phi^2 +  \Phi^4) \int_{a}^b f^{-3}(x_1)\,dx_1,
\end{equation*}
where $C(\varepsilon)$ is a constant depending on $\varepsilon$ and $C(\varepsilon,\gamma)$ depends on $\varepsilon$ and $\gamma$ with
\begin{equation}
\gamma=	\max_{i=1,2}\sup_{x_1\in \mathbb{R}} |f_i''(x_1)f(x_1)|.
\end{equation}

\end{lemma}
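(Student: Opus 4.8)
The whole lemma follows from the two explicit formulas \eqref{1-11-1}--\eqref{1-11-2} for $g_1,g_2$, combined with the support bounds \eqref{1-7} and \eqref{1-10}: on $\supp\Bg$ the quantity $x_2-\bar f(x_1)$ is comparable to $f(x_1)$, while $f_2(x_1)-x_2$ is only bounded below by $e^{-1/\varepsilon}f(x_1)/4$. Using in addition $\|f_i'\|_{C(\R)}\le\beta$ from \eqref{assumpf}, the formulas \eqref{1-11-1}--\eqref{1-11-2} give, on $\supp\Bg$, the single pointwise bound
\[
|\Bg(x)|\le C\varepsilon\Phi\Big(\tfrac{1}{f_2(x_1)-x_2}+\tfrac{1}{f(x_1)}\Big),
\]
with $C$ depending only on $\beta$ and $\|\mu'\|_{C(\R)}$ and \emph{not} on $\varepsilon$; here the $\frac{1}{f_2-x_2}$ term comes from the $\frac{1}{f_2-x_2}$ summands in \eqref{1-11-1}--\eqref{1-11-2} and the $\frac1f$ term from the $\frac{1}{x_2-\bar f}$ summands after using $x_2-\bar f\ge f/4$. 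This bound is the workhorse for the first two assertions.

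\emph{The Hardy-type inequality.} Squaring the above and integrating against $w^2$,
\[
\int_{\Omega_{a,b}}|\Bg|^2w^2\,dx\le C\varepsilon^2\Phi^2\Big(\int_{\Omega_{a,b}}\tfrac{w^2}{(f_2-x_2)^2}\,dx+\int_{\Omega_{a,b}}\tfrac{w^2}{f^2}\,dx\Big).
\]
For the first integral I freeze $x_1$ and apply the one-dimensional Hardy inequality in $x_2$ on $(f_1(x_1),f_2(x_1))$, using that the trace of $w$ along $\Sigma(x_1)$ vanishes at $x_2=f_2(x_1)$ since that point lies on $\partial\Omega\cap\partial\Omega_{a,b}$; this yields $\int\frac{w^2}{(f_2-x_2)^2}\,dx_2\le 4\int|\partial_{x_2}w|^2\,dx_2$, and integrating back in $x_1$ over $(a,b)$ controls the term by $4\|\partial_{x_2}w\|_{L^2(\Omega_{a,b})}^2$. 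The second integral is bounded by $M_0^2\|\partial_{x_2}w\|_{L^2(\Omega_{a,b})}^2$ directly from \eqref{A1-0} in Lemma \ref{lemmaA1}. The crucial point is that one must retain the genuinely singular factor $(f_2-x_2)^{-1}$ and kill it by the fiberwise Hardy inequality, rather than replace it by its $\varepsilon$-dependent lower bound, or the required clean factor $\varepsilon^2$ would be lost.

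\emph{Pointwise and integral bounds.} Now I \emph{do} use \eqref{1-10} as stated, $(f_2-x_2)^{-1}\le 4e^{1/\varepsilon}f^{-1}$ on $\supp\Bg$, so the displayed bound for $\Bg$ immediately gives $|\Bg|\le C(\varepsilon)\Phi/f$. For $|\nabla\Bg|$, write $G=\Phi\mu(\eta)$ with $\eta=1+\varepsilon\ln(f_2-x_2)-\varepsilon\ln(x_2-\bar f)$; then each entry of $\nabla\Bg$ is a sum of $\mu''(\eta)$ times a product of two first derivatives of $\eta$ and $\mu'(\eta)$ times a second derivative of $\eta$. As in \eqref{1-11-1}--\eqref{1-11-2}, $\partial_{x_1}\eta,\partial_{x_2}\eta=O(f^{-1})$ (using $|f_i'|\le\beta$), while the second derivatives contain the terms $f_i''/(f_2-x_2)$, $(f_i')^2/(f_2-x_2)^2$, $f_i''/(x_2-\bar f)$, $(f_i')^2/(x_2-\bar f)^2$; invoking \eqref{assumpf''} in the form $|f_i''|\le\gamma/f$ together with \eqref{1-7} and \eqref{1-10}, every such term is $O(\gamma f^{-2})$ up to an $\varepsilon$-dependent constant, so $|\nabla\Bg|\le C(\varepsilon,\gamma)\Phi/f^2$. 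Finally $|\supp\Bg\cap\Sigma(x_1)|\le f(x_1)/4$ by \eqref{1-7}, so integrating $|\nabla\Bg|^2\le C(\varepsilon,\gamma)\Phi^2 f^{-4}$ and $|\Bg|^4\le C(\varepsilon)\Phi^4 f^{-4}$ over each cross-section and then in $x_1$ over $(a,b)$ gives $\int_{\Omega_{a,b}}(|\nabla\Bg|^2+|\Bg|^4)\,dx\le C(\varepsilon,\gamma)(\Phi^2+\Phi^4)\int_a^b f^{-3}(x_1)\,dx_1$.

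The only genuinely delicate step is the Hardy-type inequality, precisely because one cannot afford an $\varepsilon$-dependent constant there and must peel off the boundary singularity and handle it fiber by fiber; the remaining parts are straightforward, if somewhat tedious, differentiations of the explicit formula for $G$ inserted into the support estimates already recorded before the lemma.
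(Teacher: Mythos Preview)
Your proof is correct and follows essentially the same approach as the paper's. The only minor variation is in the Hardy step: the paper uses the support inequality \eqref{1-6}, namely $f_2-x_2\le x_2-\bar f$, to bound $\frac{1}{x_2-\bar f}\le\frac{1}{f_2-x_2}$ and then applies the fiberwise Hardy inequality to the single combined singular term, whereas you split off the $\frac{1}{x_2-\bar f}$ contribution as $O(1/f)$ and handle it separately via the Poincar\'e-type bound \eqref{A1-0}. Both routes preserve the crucial clean factor $\varepsilon^2$, and the remaining pointwise and integral estimates are carried out exactly as in the paper.
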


Given the flux carrier $\Bg$ constructed in \eqref{defg}, if $\Bu$ satisfies \eqref{NS}-\eqref{BC} and \eqref{flux constraint}, then $\Bv=\Bu-\Bg$ satisfies
\begin{equation}\label{NS1}
\left\{
\begin{aligned}
&-\Delta \Bv+\Bv\cdot \nabla \Bg +\Bg\cdot \nabla \Bv+\Bv\cdot \nabla \Bv  +\nabla p=\Delta \Bg-\Bg\cdot \nabla\Bg   ~~~~&\text{ in }\Omega,\\
&{\rm div}~\Bv=0&\text{ in }\Omega,\\
&\Bv=0&\text{ on }\partial\Omega,\\
&\int_{\Sigma(x_1)} \Bv\cdot \Bn \,ds=0&\text{ for any }x_1\in \mathbb{R}.
\end{aligned}\right.
\end{equation}

The weak solutions of \eqref{NS1} is defined as follows.
\begin{definition}
A vector field $\Bv\in H_\sigma(\Omega)$ is said to be a weak solution of the problem \eqref{NS1} if for any $\Bp\in H_{0,\sigma}^1(\Omega_T)$ with $T>0$, one has
\begin{equation*}
\int_{\Omega}\nabla\Bv:\nabla\Bp+(\Bv\cdot \nabla \Bg +(\Bg+\Bv)\cdot\nabla \Bv)\cdot \Bp\,dx
=\int_{\Omega}(\Delta \Bg-\Bg\cdot \nabla \Bg)\cdot \Bp\,dx.
\end{equation*}
\end{definition}

In the rest of this section, we study the following approximate problems of \eqref{NS1} on the bounded domain $\Omega_{a,b}$,
\begin{equation}\label{aNS}
\left\{
\begin{aligned}
&-\Delta \Bv+\Bv\cdot \nabla \Bg +\Bg\cdot \nabla \Bv+\Bv\cdot \nabla \Bv  +\nabla p=\Delta \Bg-\Bg\cdot \nabla \Bg   ~~~~&\text{ in }\Omega_{a,b},\\
&{\rm div}~\Bv=0&\text{ in }\Omega_{a,b},\\
&\Bv=0&\text{ on }\partial\Omega_{a,b}
\end{aligned}\right.
\end{equation}
and its linearized problem
\begin{equation}\label{laNS}
\left\{\begin{aligned}
&-\Delta \Bv+\Bv\cdot \nabla \Bg +\Bg\cdot \nabla \Bv +\nabla p=\Delta \Bg-\Bg\cdot \nabla \Bg   ~~~~&\text{ in }\Omega_{a,b},\\
&{\rm div}~\Bv=0&\text{ in }\Omega_{a,b},\\
&\Bv=0&\text{ on }\partial\Omega_{a,b}.\end{aligned}\right.\end{equation}

The weak solutions of problems \eqref{aNS} and \eqref{laNS} can be defined as follows.
\begin{definition}
A vector field $\Bv\in H_{0,\sigma}^1(\Omega_{a,b})$ is a weak solution of the problem \eqref{aNS} and \eqref{laNS}, respectively
if for any  $\Bp\in H_{0,\sigma}^1(\Omega_{a,b})$, $\Bv$ satisfies
\begin{equation}\label{2-1}
\begin{aligned}
&\int_{\Omega_{a,b}}\nabla\Bv:\nabla\Bp+(\Bv\cdot \nabla \Bg +(\Bg+\Bv)\cdot \nabla \Bv)\cdot \Bp\,dx 
=\int_{\Omega_{a,b}}(\Delta \Bg-\Bg\cdot \nabla \Bg)\cdot \Bp\,dx
\end{aligned}
\end{equation}
and
\begin{equation}\label{2-2}\begin{aligned}
&\int_{\Omega_{a,b}}\nabla\Bv:\nabla\Bp+(\Bv\cdot \nabla \Bg +\Bg\cdot \nabla \Bv)\cdot \Bp\,dx
=\int_{\Omega_{a,b}}(\Delta \Bg-\Bg\cdot \nabla \Bg)\cdot \Bp\,dx,
\end{aligned}\end{equation}respectively.
\end{definition}

Next, we use Leray-Schauder fixed point theorem (cf. \cite[Theorem 11.3]{GT}) to prove the existence of solutions to the approximate  problem \eqref{aNS}. To this end, the well-posedness of the linearized problem \eqref{laNS} is first established by the following lemma.
\begin{lemma}\label{lemma5}
For any $\Bh\in L^\frac43(\Omega_{a,b})$, there exists a unique solution $\Bv\in H_{0,\sigma}^1(\Omega_{a,b})$ such that for any $\Bp\in H_{0,\sigma}^1(\Omega_{a,b})$, it holds that
\begin{equation}\label{2-12}
\int_{\Omega_{a,b}}\nabla\Bv:\nabla\Bp+(\Bv\cdot \nabla \Bg +\Bg\cdot \nabla \Bv)\cdot \Bp\,dx
=\int_{\Omega_{a,b}}\Bh\cdot\Bp\,dx.
\end{equation}
\end{lemma}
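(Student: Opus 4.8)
The plan is to read \eqref{2-12} as a linear variational problem on the Hilbert space $H:=H_{0,\sigma}^1(\Omega_{a,b})$, a closed subspace of $H_0^1(\Omega_{a,b})$, endowed with the inner product $(\Bv,\Bp)_H:=\int_{\Omega_{a,b}}\nabla\Bv:\nabla\Bp\,dx$, whose norm is equivalent to the $H^1$-norm on $H$ by the Poincar\'e inequality of Lemma \ref{lemmaA1}. Introduce the bilinear form
\[
B(\Bv,\Bp):=\int_{\Omega_{a,b}}\nabla\Bv:\nabla\Bp+(\Bv\cdot\nabla\Bg+\Bg\cdot\nabla\Bv)\cdot\Bp\,dx
\]
and the functional $F(\Bp):=\int_{\Omega_{a,b}}\Bh\cdot\Bp\,dx$. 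On the bounded domain $\Omega_{a,b}$ the field $\Bg$ is smooth with $|\Bg|$ and $|\nabla\Bg|$ bounded, since \eqref{1-7}--\eqref{1-10} keep $f_2(x_1)-x_2$ and $x_2-\bar{f}(x_1)$ bounded below on $\supp\Bg$; hence $B$ is bounded on $H$ by the Poincar\'e inequality of Lemma \ref{lemmaA1}, and $F$ is bounded via the embedding $H\hookrightarrow L^4(\Omega_{a,b})$ of Lemma \ref{lemmaA2}, as $|F(\Bp)|\le\|\Bh\|_{L^{4/3}(\Omega_{a,b})}\|\Bp\|_{L^4(\Omega_{a,b})}\le C\|\Bh\|_{L^{4/3}(\Omega_{a,b})}\|\nabla\Bp\|_{L^2(\Omega_{a,b})}$.

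The crux is the coercivity of $B$ on $H$. Taking $\Bp=\Bv$, one of the two lower-order integrals vanishes,
\[
\int_{\Omega_{a,b}}\Bg\cdot\nabla\Bv\cdot\Bv\,dx=\frac{1}{2}\int_{\Omega_{a,b}}\Bg\cdot\nabla|\Bv|^2\,dx=-\frac{1}{2}\int_{\Omega_{a,b}}({\rm div}\,\Bg)\,|\Bv|^2\,dx=0,
\]
by ${\rm div}\,\Bg=0$ and $\Bv|_{\partial\Omega_{a,b}}=0$; while for the other one, integration by parts (again using ${\rm div}\,\Bv=0$ and $\Bv|_{\partial\Omega_{a,b}}=0$) gives $\int_{\Omega_{a,b}}\Bv\cdot\nabla\Bg\cdot\Bv\,dx=-\int_{\Omega_{a,b}}(\Bv\cdot\nabla\Bv)\cdot\Bg\,dx$, whence by Cauchy--Schwarz and Lemma \ref{lemma1} applied to $v_1$ and $v_2$ separately,
\[
\Bigl|\int_{\Omega_{a,b}}\Bv\cdot\nabla\Bg\cdot\Bv\,dx\Bigr|\le\bigl\||\Bg|\,|\Bv|\bigr\|_{L^2(\Omega_{a,b})}\|\nabla\Bv\|_{L^2(\Omega_{a,b})}\le C\Phi\varepsilon\,\|\nabla\Bv\|_{L^2(\Omega_{a,b})}^2 .
\]
Thus $B(\Bv,\Bv)\ge(1-C\Phi\varepsilon)\|\nabla\Bv\|_{L^2(\Omega_{a,b})}^2$, and since $\varepsilon\in(0,1)$ is still at our disposal, fixing it small enough that $C\Phi\varepsilon\le\frac{1}{2}$ makes $B$ coercive. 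The Lax--Milgram theorem then produces a unique $\Bv\in H$ with $B(\Bv,\Bp)=F(\Bp)$ for all $\Bp\in H$, which is precisely \eqref{2-12}.

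I expect the only real obstacle to be this coercivity estimate, that is, absorbing $\int_{\Omega_{a,b}}\Bv\cdot\nabla\Bg\cdot\Bv\,dx$ into the Dirichlet energy \emph{without} any smallness of the flux $\Phi$; the Hardy-type bound of Lemma \ref{lemma1} is exactly the tool for this, at the price of taking $\varepsilon$ small in terms of $\Phi$ (harmless, as $\varepsilon$ is a tuning parameter of the construction that will be taken small in any case). Since the weak formulation \eqref{2-12} involves only $\Bv$ and no pressure, no de Rham-type argument is needed here, and the remaining points --- boundedness of $B$ and $F$ and the two integrations by parts above --- are routine.
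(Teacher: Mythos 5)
Your proposal is correct and follows essentially the same route as the paper's proof: a Lax--Milgram argument in $H_{0,\sigma}^1(\Omega_{a,b})$, with boundedness of the form from the smoothness of $\Bg$ on $\overline{\Omega_{a,b}}$, boundedness of the right-hand side via the $L^{4/3}$--$L^4$ pairing and Lemma \ref{lemmaA2}, and coercivity obtained exactly as you do, by the two integrations by parts together with the Hardy-type estimate of Lemma \ref{lemma1} and the choice of $\varepsilon$ small so that $C\varepsilon\Phi\le\tfrac12$. No gaps; the argument matches the paper's.
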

Here we omit the proof of Lemma \ref{lemma5}, which is based on Lax-Milgram theorem and can be found in \cite{SWX22}. 
Note that $\Delta \Bg-\Bg\cdot \nabla \Bg \in L^\frac43(\Omega_{a,b})$ since $\Bg\in C^2(\bar{\Omega})$. Therefore, the existence of solutions to the linearized problem \eqref{laNS} is a consequence of Lemma \ref{lemma5}.
\begin{cor} For any $a<b$, the linearized problem \eqref{laNS} admits a unique solution $\Bv\in H_{0,\sigma}^1(\Omega_{a,b})$.
\end{cor}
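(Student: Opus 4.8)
The plan is to derive the existence of a weak solution to the linearized approximate problem \eqref{laNS} as a direct application of Lemma \ref{lemma5}. The point is that \eqref{laNS} is precisely \eqref{2-12} with the particular choice of right-hand side $\Bh=\Delta\Bg-\Bg\cdot\nabla\Bg$, so the only thing that needs to be checked is that this $\Bh$ lies in the space $L^{4/3}(\Omega_{a,b})$ for which Lemma \ref{lemma5} is stated.

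First I would observe that, by construction in \eqref{defg}, the flux carrier $\Bg$ is built from the smooth cut-off function $\mu$ composed with a logarithmic expression in $f_2(x_1)-x_2$ and $x_2-\bar f(x_1)$; on the support of $\Bg$ both quantities are bounded below by a positive multiple of $f(x_1)$ (see \eqref{1-7}--\eqref{1-10}), and $f_1,f_2$ are smooth, so $\Bg\in C^2(\overline\Omega)$, as already noted in the excerpt. On the bounded, Lipschitz domain $\Omega_{a,b}$ this immediately gives that $\Bg$, $\nabla\Bg$ and $\Delta\Bg$ are all continuous, hence bounded, on $\overline{\Omega_{a,b}}$; consequently $\Delta\Bg-\Bg\cdot\nabla\Bg\in L^\infty(\Omega_{a,b})\subset L^{4/3}(\Omega_{a,b})$, since $|\Omega_{a,b}|<\infty$. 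Then I would apply Lemma \ref{lemma5} with $\Bh=\Delta\Bg-\Bg\cdot\nabla\Bg$ to obtain a unique $\Bv\in H^1_{0,\sigma}(\Omega_{a,b})$ satisfying \eqref{2-12}, which is exactly the weak formulation \eqref{2-2} of the linearized problem \eqref{laNS}. Uniqueness is inherited verbatim from the uniqueness clause of Lemma \ref{lemma5}.

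There is no real obstacle here; the corollary is essentially a bookkeeping statement, and the only mildly delicate point is confirming the smoothness of $\Bg$ up to the boundary of $\Omega_{a,b}$ — but this is guaranteed because $\Bg$ vanishes identically near $\partial\Omega$ (so no singularity from the logarithm reaches the lateral boundary) and the artificial cross-sections $\Sigma(a)$, $\Sigma(b)$ meet $\overline{\Omega}$ only where $\Bg$ and its derivatives are smooth. Hence the proof is a single line: $\Delta\Bg-\Bg\cdot\nabla\Bg\in L^{4/3}(\Omega_{a,b})$ and Lemma \ref{lemma5} applies.
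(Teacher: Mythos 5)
Your proposal is correct and follows essentially the same route as the paper: one notes that $\Bg\in C^2(\overline{\Omega})$ so that $\Delta\Bg-\Bg\cdot\nabla\Bg\in L^{4/3}(\Omega_{a,b})$ on the bounded domain, and then the corollary is an immediate application of Lemma \ref{lemma5} with this choice of $\Bh$. Your additional remarks on why $\Bg$ is smooth up to the boundary are consistent with the construction in Section \ref{secflux} and do not change the argument.
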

 
Finally, the existence of solutions for the approximate problem \eqref{aNS} follows from the application of Leray-Schauder fixed point theorem. 

\begin{pro}\label{appro-existence}
For any $a<b$, the problem \eqref{aNS} has a weak solution $\Bv\in H_{0,\sigma}(\Omega_{a,b})$ satisfying
\begin{equation}\label{2-10-1}
\|\nabla \Bv\|_{L^2(\Omega_{a,b})}^2\leq C_0\int_{\Omega_{a,b}} |\nabla \Bg|^2+|\Bg|^4\,dx,
\end{equation}
where the constant $C_0$ is independent of $a$ and $b$.
\end{pro}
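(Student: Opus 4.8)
The plan is to apply the Leray--Schauder fixed point theorem in the Hilbert space $H_{0,\sigma}^1(\Omega_{a,b})$, using the previous corollary (solvability of the linearized problem \eqref{laNS}) to build the fixed point operator. First I would define $\mathcal{T}:H_{0,\sigma}^1(\Omega_{a,b})\to H_{0,\sigma}^1(\Omega_{a,b})$ by sending $\Bw$ to the unique solution $\Bv$ of \eqref{2-12} with right-hand side $\Bh=\Delta\Bg-\Bg\cdot\nabla\Bg-\Bw\cdot\nabla\Bw$; since $\Bg\in C^2(\bar\Omega)$ and $\Bw\cdot\nabla\Bw\in L^{4/3}(\Omega_{a,b})$ by Lemma \ref{lemmaA2}, this $\Bh$ lies in $L^{4/3}(\Omega_{a,b})$ and Lemma \ref{lemma5} applies. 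A weak solution of \eqref{aNS} is exactly a fixed point of $\mathcal{T}$. Continuity and compactness of $\mathcal{T}$ follow from the compact embedding $H_0^1(\Omega_{a,b})\hookrightarrow L^4(\Omega_{a,b})$ (so that $\Bw_n\to\Bw$ in $H^1$ forces $\Bw_n\cdot\nabla\Bw_n\to\Bw\cdot\nabla\Bw$ in $L^{4/3}$, hence $\mathcal{T}\Bw_n\to\mathcal{T}\Bw$ in $H^1$ by the linear estimate; and $\mathcal{T}$ maps bounded sets into bounded sets of $H^1$, which is compact into $L^4$ -- one runs the argument once more to get $H^1$-precompactness of the image).

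The heart of the matter is the a priori bound: every $\Bv\in H_{0,\sigma}^1(\Omega_{a,b})$ solving $\Bv=\lambda\,\mathcal{T}\Bv$ for some $\lambda\in[0,1]$ satisfies \eqref{2-10-1} with a constant independent of $\lambda$, $a$, $b$. Such a $\Bv$ is a weak solution of \eqref{aNS} with $\Bg$ replaced by $\lambda\Bg$ in the forcing and in the linear terms; concretely, testing the equation with $\Bp=\Bv$ and using the structural identities $\int_{\Omega_{a,b}}\Bg\cdot\nabla\Bv\cdot\Bv\,dx=0$ and $\int_{\Omega_{a,b}}\Bv\cdot\nabla\Bv\cdot\Bv\,dx=0$ (both from integration by parts, since $\operatorname{div}\Bg=\operatorname{div}\Bv=0$ and $\Bv$ vanishes on $\partial\Omega_{a,b}$), and $\int_{\Omega_{a,b}}\Bv\cdot\nabla\Bg\cdot\Bv\,dx=-\int_{\Omega_{a,b}}\Bv\cdot\nabla\Bv\cdot\Bg\,dx$, one arrives at
\begin{equation*}
\|\nabla\Bv\|_{L^2(\Omega_{a,b})}^2
\le \lambda\left|\int_{\Omega_{a,b}}\Bv\cdot\nabla\Bv\cdot\Bg\,dx\right|
+\lambda\left|\int_{\Omega_{a,b}}(\Delta\Bg-\Bg\cdot\nabla\Bg)\cdot\Bv\,dx\right|.
\end{equation*}
For the first term I would use Cauchy--Schwarz together with Lemma \ref{lemma1}, $|\int\Bv\cdot\nabla\Bv\cdot\Bg\,dx|\le\|\nabla\Bv\|_{L^2}(\int\Bv^2\Bg^2\,dx)^{1/2}\le C\varepsilon\Phi\|\nabla\Bv\|_{L^2}^2$, and fix $\varepsilon$ once and for all so small that $C\varepsilon\Phi\le\frac14$, absorbing it into the left side. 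For the second term, write $\int(\Delta\Bg-\Bg\cdot\nabla\Bg)\cdot\Bv\,dx=-\int\nabla\Bg:\nabla\Bv\,dx-\int\Bg\cdot\nabla\Bg\cdot\Bv\,dx$ after integrating the Laplacian by parts (the boundary term vanishes because $\Bv=0$ on $\partial\Omega_{a,b}$), then bound it by $\|\nabla\Bg\|_{L^2}\|\nabla\Bv\|_{L^2}+\|\Bg\|_{L^4}^2\|\Bv\|_{L^4}\le C(\|\nabla\Bg\|_{L^2}+\|\Bg\|_{L^4}^2)\|\nabla\Bv\|_{L^2}$ using Lemma \ref{lemmaA2}; note that the Sobolev constant $M_4(\Omega_{a,b})$ enters here, but it is multiplied against the full $\|\nabla\Bv\|_{L^2}$ and after a Young's inequality step produces precisely $C\,\|\nabla\Bv\|_{L^2}^2/4 + C(\|\nabla\Bg\|_{L^2}^2+\|\Bg\|_{L^4}^4)$. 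Absorbing the two quartic-$\Bv$-free quarter-norms yields \eqref{2-10-1}.

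The main obstacle is ensuring the constant $C_0$ in \eqref{2-10-1} is genuinely independent of $a$ and $b$: this is why the smallness of $\varepsilon$ in Lemma \ref{lemma1} must be chosen depending only on $\Phi$ (uniform in the domain, which Lemma \ref{lemma1} guarantees), and why I phrase the second estimate so that only $\|\nabla\Bg\|_{L^2(\Omega_{a,b})}^2+\|\Bg\|_{L^4(\Omega_{a,b})}^4$ -- the exact quantity appearing on the right of \eqref{2-10-1} -- survives, rather than any factor of $M_4(\Omega_{a,b})$ or $|\Omega_{a,b}|$. (If one is worried about the $\|\Bg\|_{L^4}^2\|\Bv\|_{L^4}$ term, the cleanest route is instead to estimate $\int\Bg\cdot\nabla\Bg\cdot\Bv\,dx=-\int\Bg\otimes\Bv:\nabla\Bg\,dx$ wait -- better: keep it as $\|\Bg\|_{L^4}\|\nabla\Bg\|_{L^2}\|\Bv\|_{L^4}$ is not integrable-dimensionally; the two-term split above is the safe one.) Once the uniform a priori bound is in hand, Leray--Schauder delivers a fixed point $\Bv\in H_{0,\sigma}^1(\Omega_{a,b})$, and this same $\Bv$ (now with $\lambda=1$) satisfies \eqref{2-10-1}, completing the proof of Proposition \ref{appro-existence}.
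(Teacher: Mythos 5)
Your overall strategy is the paper's: build the map $\Bw\mapsto\mathcal{T}(\Delta\Bg-\Bg\cdot\nabla\Bg-\Bw\cdot\nabla\Bw)$ from the linear solvability result, check continuity and compactness via the embedding $H^1(\Omega_{a,b})\hookrightarrow L^4(\Omega_{a,b})$, and close the argument with a uniform a priori bound for $\Bv=\lambda\mathcal{T}\Bv$ obtained by testing with $\Bv$ and absorbing $\int\Bv\cdot\nabla\Bv\cdot\Bg\,dx$ through Lemma \ref{lemma1} with $\varepsilon$ small. All of that matches the intended proof.

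However, there is a genuine gap exactly at the point you flag in your parenthetical and then wave away: your treatment of the term $\int_{\Omega_{a,b}}\Bg\cdot\nabla\Bg\cdot\Bv\,dx$. You bound it by $\|\Bg\|_{L^4}^2\|\Bv\|_{L^4}$ and then invoke Lemma \ref{lemmaA2} to replace $\|\Bv\|_{L^4}$ by $\|\nabla\Bv\|_{L^2}$. This necessarily brings in the constant $M_4(\Omega_{a,b})$ of \eqref{defM4}, which contains the factor $|\Omega_{a,b}|^{1/4}$ (and $M_1\sim\|f\|_{L^\infty(a,b)}$) and therefore blows up as $b-a\to\infty$; after Young's inequality the coefficient of $\|\Bg\|_{L^4}^4$ is $M_4^2(\Omega_{a,b})$, not a universal constant. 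So your claim that ``only $\|\nabla\Bg\|_{L^2}^2+\|\Bg\|_{L^4}^4$ survives, rather than any factor of $M_4(\Omega_{a,b})$ or $|\Omega_{a,b}|$'' is not justified by the estimate you wrote, and the resulting $C_0$ would depend on $a,b$ — which would also ruin the later growth arguments (Lemmas \ref{lemma3} and \ref{lemma4}) that need \eqref{2-10-1} with a uniform constant. The repair is simple and is what the paper does: since $\operatorname{div}\Bg=0$ and $\Bv\in H_0^1(\Omega_{a,b})$, integrate by parts to write $\int_{\Omega_{a,b}}\Bg\cdot\nabla\Bg\cdot\Bv\,dx=-\int_{\Omega_{a,b}}\Bg\cdot\nabla\Bv\cdot\Bg\,dx$, and then H\"older gives the bound $\|\Bg\|_{L^4(\Omega_{a,b})}^2\|\nabla\Bv\|_{L^2(\Omega_{a,b})}$ with an absolute constant, so that the whole right-hand side is controlled by $\bigl(\int_{\Omega_{a,b}}|\nabla\Bg|^2+|\Bg|^4\,dx\bigr)^{1/2}\|\nabla\Bv\|_{L^2(\Omega_{a,b})}$ and \eqref{2-10-1} follows with $C_0$ independent of $a,b$. (Alternatively, the weighted inequality in Lemma \ref{lemma1} applied to $|\Bg||\Bv|$ also gives a domain-independent bound.) With that single change your proof is complete and coincides with the paper's.
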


As long as the existence of weak solution $\Bv$ for the problem \eqref{2-1} is established, one can further obtain the associated pressure for \eqref{aNS} with the aid of the following lemma,  whose proof can be found in \cite[Theorem \uppercase\expandafter{\romannumeral3}.5.3]{Ga}.
\begin{pro}\label{pressure}
The vector field $\Bv\in H_{0,\sigma}^1(\Omega_{a,b})$ is a weak solution of the approximate problem \eqref{aNS} if and only if there exists a function $p\in L^2(\Omega_{a,b})$ such that the identity
\begin{equation}\label{2-11}
\begin{aligned}
\int_{\Omega_{a,b}}\nabla\Bv:\nabla\Bp+(\Bv\cdot \nabla \Bg +(\Bg+\Bv)\cdot \nabla \Bv)\cdot \Bp\,dx&\\
-\int_{\Omega_{a,b}}p{\rm div}\Bp\,dx&=\int_{\Omega_{a,b}}(\Delta \Bg-\Bg\cdot \nabla \Bg)\cdot \Bp\,dx
\end{aligned}
\end{equation}
holds for any $\Bp\in H_0^1(\Omega_{a,b})$.
\end{pro}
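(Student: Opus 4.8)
The plan is to reprove the classical pressure-recovery (de Rham) lemma in the present setting, the only nontrivial ingredient being the solvability of the divergence equation from Lemma \ref{lemmaA5}. The ``if'' direction is trivial: suppose $p\in L^2(\Omega_{a,b})$ satisfies \eqref{2-11} for every $\Bp\in H_0^1(\Omega_{a,b})$. Since $H_{0,\sigma}^1(\Omega_{a,b})\subset H_0^1(\Omega_{a,b})$, one may test \eqref{2-11} against any $\Bp\in H_{0,\sigma}^1(\Omega_{a,b})$; then $\div\Bp=0$, so the term $\int_{\Omega_{a,b}}p\,\div\Bp\,dx$ vanishes and \eqref{2-11} reduces to \eqref{2-1}. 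Hence $\Bv$ is a weak solution of \eqref{aNS}.

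For the ``only if'' direction, assume $\Bv\in H_{0,\sigma}^1(\Omega_{a,b})$ is a weak solution and define the linear functional
\[
\mathcal{F}(\Bp):=\int_{\Omega_{a,b}}\nabla\Bv:\nabla\Bp+\big(\Bv\cdot\nabla\Bg+(\Bg+\Bv)\cdot\nabla\Bv\big)\cdot\Bp\,dx-\int_{\Omega_{a,b}}(\Delta\Bg-\Bg\cdot\nabla\Bg)\cdot\Bp\,dx
\]
on $H_0^1(\Omega_{a,b})$. First I would check that $\mathcal{F}$ is bounded: the term $\int\nabla\Bv:\nabla\Bp$ is controlled by $\|\nabla\Bv\|_{L^2}\|\nabla\Bp\|_{L^2}$; since $\Bg\in C^2(\overline{\Omega})$ and $\Omega_{a,b}$ is bounded, the terms with $\Bv\cdot\nabla\Bg$ and $\Bg\cdot\nabla\Bv$ are bounded by $C\|\Bv\|_{H^1}\|\Bp\|_{L^2}$; the convective term is handled by H\"{o}lder's inequality together with the embedding $H_0^1(\Omega_{a,b})\hookrightarrow L^4(\Omega_{a,b})$ from Lemma \ref{lemmaA2}, giving $\big|\int\Bv\cdot\nabla\Bv\cdot\Bp\big|\le C\|\nabla\Bv\|_{L^2}^2\|\nabla\Bp\|_{L^2}$; and $\Delta\Bg-\Bg\cdot\nabla\Bg\in L^{4/3}(\Omega_{a,b})$ since $\Bg$ is smooth on the closure, so the last term is bounded by $C\|\nabla\Bp\|_{L^2}$. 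By the defining identity \eqref{2-1}, $\mathcal{F}$ vanishes on the closed subspace $H_{0,\sigma}^1(\Omega_{a,b})=\ker\big(\div:H_0^1(\Omega_{a,b})\to L_0^2(\Omega_{a,b})\big)$, the inclusion $\div\big(H_0^1(\Omega_{a,b})\big)\subset L_0^2(\Omega_{a,b})$ being just the divergence theorem.

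It remains to produce $p\in L^2(\Omega_{a,b})$ with $\mathcal{F}(\Bp)=\int_{\Omega_{a,b}}p\,\div\Bp\,dx$ for all $\Bp\in H_0^1(\Omega_{a,b})$, and this is the heart of the argument. Here Lemma \ref{lemmaA5} enters: since $\Omega_{a,b}$ is a bounded Lipschitz domain, the operator $\div:H_0^1(\Omega_{a,b})\to L_0^2(\Omega_{a,b})$ is surjective and admits a bounded right inverse $\mathcal{B}$ (the Bogovskii operator). Consequently $w\mapsto\mathcal{F}(\mathcal{B}w)$ is a bounded linear functional on $L_0^2(\Omega_{a,b})$, hence represented by some $p\in L_0^2(\Omega_{a,b})\subset L^2(\Omega_{a,b})$. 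For a general $\Bp\in H_0^1(\Omega_{a,b})$ one writes $\Bp=\big(\Bp-\mathcal{B}(\div\Bp)\big)+\mathcal{B}(\div\Bp)$, the first term lying in $H_{0,\sigma}^1(\Omega_{a,b})$ so that $\mathcal{F}$ annihilates it, whence $\mathcal{F}(\Bp)=\mathcal{F}\big(\mathcal{B}(\div\Bp)\big)=\int_{\Omega_{a,b}}p\,\div\Bp\,dx$; rearranging this identity gives precisely \eqref{2-11}, and normalizing $\int_{\Omega_{a,b}}p\,dx=0$ makes $p$ unique. The only genuinely nontrivial input is the bounded right inverse of the divergence, which is supplied by Lemma \ref{lemmaA5}; everything else is routine functional analysis, and the argument is essentially that of \cite[Theorem \uppercase\expandafter{\romannumeral3}.5.3]{Ga}.
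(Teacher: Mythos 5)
Your proof is correct and follows essentially the same route as the paper, which simply invokes the standard pressure-recovery result of \cite[Theorem \uppercase\expandafter{\romannumeral3}.5.3]{Ga}: boundedness of the functional, vanishing on the kernel of the divergence, and representation via the Bogovskii right inverse from Lemma \ref{lemmaA5}. The only point worth making explicit is that the map $w\mapsto\mathcal{F}(\mathcal{B}w)$ is well defined and linear because $\mathcal{F}$ annihilates all divergence-free fields in $H_0^1(\Omega_{a,b})$ (so the value depends only on $\div$ of the argument), which your own decomposition already shows.
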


\section{Flows in channels with bounded outlets}\label{secexist1}
In this section, we investigate the flows in channels with bounded outlets. Using the technique developed in \cite{LS}, one can prove the existence of solutions to the problem \eqref{NS}-\eqref{BC} and \eqref{flux constraint} by showing a uniform estimate for the approximate solutions obtained in Proposition \ref{appro-existence}. Since the three-dimensional problem is already solved in \cite{LS}, Theorem \ref{bounded channel} can be proved in a similar way with the aid of Lemmas \ref{lemmaA1}-\ref{lemmaA5}. Here we give a sketch of the proof. One may refer to \cite{LS,SWX22} for the detail.


\begin{lemma}\label{lemma3}
Assume that $\Omega$ is a channel with bounded outlets, i.e., $f$ satisfies \eqref{assumpf}. Let $\Bv^T$ be the solution of the approximate problem \eqref{aNS} in $\Omega_T$, which is obtained in Proposition \ref{appro-existence}. Then one has
\begin{equation}\label{3-0}
\|\nabla\Bv^T\|_{L^2(\Omega_t)}^2 \leq C_3+C_4t \ \ \ \text{ for any }1< t\leq T-1,
\end{equation}
where the constants $C_3$ and $C_4$ are independent of $t$ and $T$.
\end{lemma}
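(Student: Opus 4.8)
The plan is to derive a differential inequality for the local Dirichlet integral
$$
y(t):=\|\nabla\Bv^T\|_{L^2(\Omega_t)}^2
$$
and then invoke part~(1) of Lemma~\ref{lemmaA4} with a linear comparison function $\varphi(t)=C_3+C_4t$. First I would test the weak formulation \eqref{2-1} in $\Omega_T$ against a suitable cutoff of $\Bv^T$ itself. Concretely, fix a smooth scalar function $\eta_t(x_1)$ equal to $1$ for $|x_1|\le t$, equal to $0$ for $|x_1|\ge t+1$, with $|\eta_t'|\le 2$; the obstacle is that $\eta_t\Bv^T$ is no longer divergence-free, so one must correct it by a Bogovskii-type field, i.e. solve $\operatorname{div}\Ba=-\nabla\eta_t\cdot\Bv^T$ on the slab $\Omega_{t,t+1}\cup\Omega_{-t-1,-t}$ with zero boundary data, using Lemma~\ref{lemmaA5}; the key point is that the constant $M_5$ there is \emph{uniform} in $t$ because each slab has bounded width $\le\bar d$ and can be covered by a fixed number of star-shaped pieces with balls of radius bounded below. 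Taking $\Bp=\eta_t\Bv^T-\Ba$ in \eqref{2-1} then yields, after integration by parts, an identity of the schematic form
$$
\int_{\Omega}\eta_t|\nabla\Bv^T|^2\,dx = (\text{flux-carrier terms}) + (\text{cutoff error terms on }\Omega_{t,t+1}\cup\Omega_{-t-1,-t}),
$$
where the error terms involve only $\Bv^T$, $\nabla\Bv^T$, $\Ba$, $\nabla\Ba$, $\Bg$ on the two unit-length slabs at $\pm t$.

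Next I would estimate the right-hand side. The flux-carrier contribution $\int(\Delta\Bg-\Bg\cdot\nabla\Bg)\cdot\Bp+\int(\Bv^T\cdot\nabla\Bg)\cdot\Bp$ is controlled, using Lemma~\ref{lemma1} (which absorbs $\int\Bv\cdot\nabla\Bv\cdot\Bg$ with a small factor $C\veps\Phi<\tfrac12$) and the $L^4$ bound of Lemma~\ref{lemmaA2}, by a constant times $1+\|\nabla\Bv^T\|_{L^2(\Omega_{t+1})}$ plus a term that can be hidden in $\tfrac12\int\eta_t|\nabla\Bv^T|^2$; here the boundedness of $\supp\Bg$'s width matters so that $\int|\nabla\Bg|^2+|\Bg|^4\le C\int f^{-3}\le C(1+t)$ on $\Omega_{t+1}$. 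For the cutoff error terms, Poincaré (Lemma~\ref{lemmaA1}, uniform constant $M_1\le C\bar d$ on a unit slab) and the uniform Bogovskii bound give
$$
\|\Ba\|_{L^2(\Omega_{t,t+1})}+\|\nabla\Ba\|_{L^2(\Omega_{t,t+1})}\le C\|\Bv^T\|_{L^2(\Omega_{t,t+1})}\le C\,\|\nabla\Bv^T\|_{L^2(\Omega_{t,t+1})},
$$
and the trilinear term $\int\Bv^T\cdot\nabla\Bv^T\cdot\Ba$ on the slab is bounded by $C\|\nabla\Bv^T\|_{L^2(\Omega_{t,t+1})}^{3/2}\cdot(\dots)$ via the $L^4$ estimate with the slab diameter kept $O(1)$. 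Collecting everything, and writing $y'(t)\ge\|\nabla\Bv^T\|_{L^2(\Omega_{t+1})}^2-\|\nabla\Bv^T\|_{L^2(\Omega_t)}^2$ up to comparison of slabs, one arrives at an inequality of the form
$$
y(t)\le C\bigl(1+ y'(t) + (y'(t))^{3/2}\bigr)+\tfrac12\,y(t),\qquad 1<t\le T-1,
$$
i.e. $y(t)\le \Psi(y'(t))+\tfrac12\varphi(t)$ with $\Psi(s)=C(1+s+s^{3/2})$ and $\delta_1=\tfrac12$, provided we also note $\varphi(t)=C_3+C_4t$ satisfies $\varphi(t)\ge 2\Psi(\varphi'(t))=2\Psi(C_4)$ once $C_3,C_4$ are chosen large (this fixes $C_4$ first, then $C_3$).

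Finally, the endpoint hypothesis $z(T)\le\varphi(T)$ of Lemma~\ref{lemmaA4}(1) is supplied by the global bound \eqref{2-10-1} from Proposition~\ref{appro-existence}: since $\|\nabla\Bv^T\|_{L^2(\Omega_T)}^2\le C_0\int_{\Omega_T}|\nabla\Bg|^2+|\Bg|^4\,dx\le C(1+T)$ (again using bounded width, $f^{-3}\le C$), one has $y(T)\le C(1+T)\le C_3+C_4T=\varphi(T)$ after possibly enlarging the constants. Applying Lemma~\ref{lemmaA4}(1) then gives $y(t)\le C_3+C_4t$ for all $1<t\le T-1$, which is \eqref{3-0}. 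The main obstacle I anticipate is the bookkeeping that makes \emph{all} constants—in Bogovskii's estimate, in Poincaré's and the $L^4$ inequality, and in the absorption of the $\Bg$-terms—simultaneously independent of both $t$ and $T$; this is exactly where the bounded-width assumption \eqref{bounded} and the careful choice of unit-length truncation slabs are used, and it is the reason the slabs at $\pm t$ (rather than a single growing domain) are the right test geometry.
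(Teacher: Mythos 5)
Your proposal is correct in substance and follows the same overall strategy as the paper: a cutoff at scale $t$, error terms confined to unit-width slabs where all constants (Poincar\'e, $L^4$, Bogovskii) are uniform in $t$ and $T$ thanks to the bounded width, absorption of $\int\Bv\cdot\nabla\Bv\cdot\Bg$ via Lemma \ref{lemma1} with $C\veps\Phi<\tfrac12$, a differential inequality $y\le \Psi(y')+(1-\delta_1)\varphi$, and the endpoint $y(T)\le\varphi(T)$ supplied by \eqref{2-10-1}, closed by Lemma \ref{lemmaA4}(1). The one genuine difference is how you handle the fact that the truncated field is not solenoidal: you correct the test function, solving $\operatorname{div}\Ba=-\nabla\eta_t\cdot\Bv^T$ on the slabs (legitimate, since the zero cross-sectional flux of $\Bv^T$ makes the right-hand side mean-zero) and test the divergence-free formulation \eqref{2-1} with $\eta_t\Bv^T-\Ba$; the paper instead tests the pressure formulation \eqref{2-11} with $\zeta\Bv^T$ directly and then controls the resulting slab term $\int_E p\,v_1\,\partial_{x_1}\zeta$ by applying Bogovskii to $v_1$ and re-inserting that field as a test function. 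Both routes lean on the same two facts (zero flux of $\Bv^T$ and a $t$-uniform Bogovskii constant), so the trade-off is mainly cosmetic: your version never needs the pressure, the paper's version avoids recomputing the trilinear terms against the correction field $\Ba$.

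Two points you should tighten. First, with $y(t):=\|\nabla\Bv^T\|_{L^2(\Omega_t)}^2$ the slab Dirichlet integral is $y(t+1)-y(t)$, not $y'(t)$; to get a literal differential inequality you should (as the paper does) define $y(t)=\int\zeta(x,t)|\nabla\Bv^T|^2\,dx$ with a cutoff moving with $t$ whose slopes satisfy $|\partial_t\zeta|=|\partial_{x_1}\zeta|$ on the slabs, so that $y'(t)$ is exactly the slab integral; your phrase ``up to comparison of slabs'' glosses over this, and Lemma \ref{lemmaA4} is stated for genuine derivatives. Second, Lemma \ref{lemmaA4}(1) requires $\Psi(t,0)=0$, so the additive constant in your $\Psi(s)=C(1+s+s^{3/2})$ must be moved into the $(1-\delta_1)\varphi(t)$ part (harmless, since $\varphi(t)=C_3+C_4t$ with $C_3$ large), and the cubic slab term should read $(y'(t))^{3/2}$, i.e. the cube of the $L^2$ norm of $\nabla\Bv^T$ on the slab. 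With these adjustments your argument matches the paper's proof.
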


According to Lemma \ref{lemma3}, for any $1\leq t\le T-1$,
\begin{equation*}
\|\nabla \Bv^T\|_{L^2(\Omega_{t})}^2\leq C_3 +C_4 t.
\end{equation*}
 Since the constants $C_3$ and $C_4$ are independent of $t$ and $T$, one can extend $\Bv^T$ by zero to the whole channel $\Omega$ and take the limit $T\to \infty$ and select a subsequence which converges weakly in $H^1_{loc}(\Omega)$ to a solution $\Bv$ of \eqref{NS1}. Moreover, $\Bv$  satisfies the estimate
\begin{equation*}
\|\nabla \Bv\|_{L^2(\Omega_{t})}^2\leq C_3 +C_4 t.
\end{equation*}
With the estimate for $\Bg$ in Lemma \ref{lemma1}, one has the following proposition on the existence of solutions.
\begin{pro}\label{straight-existence}
The problem \eqref{NS}-\eqref{BC} and \eqref{flux constraint} has a solution  $\Bu=\Bv+\Bg\in H_\sigma(\Omega)$ satisfying
\begin{equation}\label{3-27}
\|\nabla \Bu\|_{L^2(\Omega_{t})}^2 \leq \tilde{C}(1+t),
\end{equation}
where the constant $\tilde{C}$ depends only on $\Phi$ and $\Omega$.
\end{pro}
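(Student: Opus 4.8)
The plan is to assemble Proposition~\ref{straight-existence} from the approximate-problem machinery already in place. First, I would fix a flux $\Phi$ and, for each large $T>0$, invoke Proposition~\ref{appro-existence} to obtain a weak solution $\Bv^T\in H_{0,\sigma}^1(\Omega_T)$ of the approximate problem \eqref{aNS} on $\Omega_T$, together with the global bound $\|\nabla\Bv^T\|_{L^2(\Omega_T)}^2\leq C_0\int_{\Omega_T}|\nabla\Bg|^2+|\Bg|^4\,dx$. Then I would apply Lemma~\ref{lemma3} (valid since the channel has bounded outlets, so \eqref{assumpf} and \eqref{bounded} hold) to get the crucial \emph{localized} growth estimate $\|\nabla\Bv^T\|_{L^2(\Omega_t)}^2\leq C_3+C_4t$ for $1<t\leq T-1$, with $C_3,C_4$ independent of both $t$ and $T$. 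This $t$- and $T$-uniformity is exactly what makes the limit procedure work.

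Next I would extend each $\Bv^T$ by zero to all of $\Omega$ (legitimate since $\Bv^T\in H_{0,\sigma}^1(\Omega_T)$) and extract a diagonal subsequence. For each fixed $t$, the sequence $\{\Bv^T\}_{T>t+1}$ is bounded in $H^1(\Omega_t)$ by the Lemma~\ref{lemma3} estimate, and the constant stays fixed; by Rellich's theorem and a Cantor diagonal argument over $t\in\mathbb{N}$, a subsequence converges weakly in $H^1_{loc}(\Omega)$ and strongly in $L^2_{loc}(\Omega)$ (indeed in $L^4_{loc}$ by Lemma~\ref{lemmaA2}) to some $\Bv\in H_\sigma(\Omega)$. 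The strong $L^4_{loc}$ convergence lets me pass to the limit in the nonlinear terms $\Bv^T\cdot\nabla\Bv^T$ and the lower-order terms $\Bv^T\cdot\nabla\Bg$, $\Bg\cdot\nabla\Bv^T$ tested against any fixed $\Bp\in H_{0,\sigma}^1(\Omega_{T_0})$: once $T>T_0$ the test function is admissible for the $\Bv^T$-equation, and all integrals converge, so $\Bv$ is a weak solution of \eqref{NS1}. Weak lower semicontinuity of the $H^1(\Omega_t)$ norm then yields $\|\nabla\Bv\|_{L^2(\Omega_t)}^2\leq C_3+C_4t$. Setting $\Bu=\Bv+\Bg$ recovers a solution of \eqref{NS}--\eqref{BC} and \eqref{flux constraint}, since $\Bg$ is a smooth solenoidal flux carrier vanishing near $\partial\Omega$; the estimate $\|\nabla\Bg\|_{L^2(\Omega_t)}^2\leq C(1+t)$ (Lemma~\ref{lemma1}, using boundedness of $f$ so $\int_{-t}^t f^{-3}\leq C t$) combines with the bound on $\nabla\Bv$ via the triangle inequality to give \eqref{3-27} with $\tilde C$ depending only on $\Phi$ and $\Omega$.

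The main obstacle — and the only place requiring genuine care — is the passage to the limit in the nonlinear convective term on an unbounded domain: one must ensure the test function has compact support so that the $T$-truncation eventually contains it, and one must have strong enough local convergence ($L^4_{loc}$, via the Ladyzhenskaya-type embedding of Lemma~\ref{lemmaA2}) to handle the quadratic term $\int\Bv^T\cdot\nabla\Bv^T\cdot\Bp = -\int \Bv^T\otimes\Bv^T:\nabla\Bp$. Everything else is a routine compactness-and-diagonalization argument. Since the three-dimensional analogue is carried out in detail in \cite{LS} and the two-dimensional flux-carrier version in \cite{SWX22}, I would merely indicate these points and refer to those sources for the full details rather than reproducing the computation.
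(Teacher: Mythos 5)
Your proposal is correct and follows essentially the same route as the paper: approximate solutions from Proposition \ref{appro-existence}, the $T$-uniform linear growth bound of Lemma \ref{lemma3}, zero extension and a diagonal weak-$H^1_{loc}$ (strong $L^4_{loc}$) limit to a solution of \eqref{NS1}, then adding $\Bg$ with the Lemma \ref{lemma1} estimate (note it is the lower bound $f\ge\underline{d}$ that controls $\int_{-t}^t f^{-3}\,dx_1\le Ct$, a harmless slip in your wording). The extra compactness details you supply are exactly what the paper omits by referring to \cite{LS,SWX22}.
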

\begin{remark}
	There exists a constant $C>0$ such that
	for any fixed subdomain $\Omega_{a,b}$, if $\Phi>0$ is sufficiently small, one has
	\[
	\int_{\Omega_{a,b}}|\nabla \Bg|^2+|\Bg|^4\,dx\leq C(b-a)\Phi^2.
	\]
	Therefore, there exists a $\Phi_0>0$ such that if $\Phi\in [0, \Phi_0)$, one has
\[
C_3+C_4+\tilde{C}\leq C\Phi^2,
\]
where $C_3,C_4$, and $\tilde{C}$ are the constant appeared in \eqref{3-0} and \eqref{3-27}.
\end{remark}

Similar to Lemma \ref{pressure}, one can also define the pressure of the problem \eqref{NS} and \eqref{BC}.
\begin{pro}\label{pressure1}
The vector field $\Bu\in H_\sigma(\Omega)$ is a weak solution of the problem \eqref{NS} and \eqref{BC} if and only if there exists a function $p\in L^2_{loc}(\Omega)$ such that  for any $\Bp\in H_0^1(\Omega_T)$ with $T>0$, it holds that
\begin{equation}\label{3-14-1}
\int_{\Omega_T}\nabla\Bu:\nabla\Bp+\Bu\cdot \nabla \Bu \cdot \Bp - p{\rm div}\Bp\,dx=0.
\end{equation}

\end{pro}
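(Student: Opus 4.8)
The plan is to prove the two implications separately, with all the substance in the ``only if'' part. The \emph{if} direction is immediate: if such a $p\in L^2_{loc}(\Omega)$ exists, then for any $\Bp\in H_{0,\sigma}^1(\Omega_T)$ we have ${\rm div}\,\Bp=0$, so \eqref{3-14-1} reduces to the weak formulation \eqref{weak solution} and $\Bu$ is a weak solution of \eqref{NS}--\eqref{BC}. The proof of Proposition~\ref{pressure1} is really the non-compact analogue of Proposition~\ref{pressure}, obtained by recovering a pressure on each truncated domain and then gluing.

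For the \emph{only if} direction, fix $T>0$ and consider the linear functional
\[
F_T(\Bp):=\int_{\Omega_T}\nabla\Bu:\nabla\Bp+\Bu\cdot\nabla\Bu\cdot\Bp\,dx,\qquad \Bp\in H_0^1(\Omega_T).
\]
First I would check that $F_T$ is a bounded functional on $H_0^1(\Omega_T)$: since $\Bu\in H_\sigma(\Omega)\subset H^1_{loc}(\Omega)$ we have $\nabla\Bu\in L^2(\Omega_T)$, and by the two-dimensional embedding $H^1(\Omega_T)\hookrightarrow L^4(\Omega_T)$ together with ${\rm div}\,\Bu=0$ one integrates by parts (justified by density, using $\Bp|_{\partial\Omega_T}=0$) to get $\int_{\Omega_T}\Bu\cdot\nabla\Bu\cdot\Bp\,dx=-\int_{\Omega_T}\Bu\cdot\nabla\Bp\cdot\Bu\,dx$, which is bounded by $\|\Bu\|_{L^4(\Omega_T)}^2\|\nabla\Bp\|_{L^2(\Omega_T)}$; hence $|F_T(\Bp)|\le C_T\|\nabla\Bp\|_{L^2(\Omega_T)}$. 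By the definition of weak solution, $F_T$ vanishes on $H_{0,\sigma}^1(\Omega_T)$. Then I would invoke the de Rham-type lemma underlying Proposition~\ref{pressure}, namely \cite[Theorem III.5.3]{Ga}: there exists $p_T\in L^2(\Omega_T)$, unique up to an additive constant, such that $F_T(\Bp)=\int_{\Omega_T}p_T\,{\rm div}\,\Bp\,dx$ for all $\Bp\in H_0^1(\Omega_T)$. Normalize $p_T$ by $\int_{\Omega_T}p_T\,dx=0$.

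The step requiring care is the compatibility of the family $\{p_T\}_{T>0}$. For $T'>T$, extending $\Bp\in H_0^1(\Omega_T)$ by zero gives $F_{T'}(\Bp)=F_T(\Bp)$, hence $\int_{\Omega_T}(p_{T'}-p_T)\,{\rm div}\,\Bp\,dx=0$ for all $\Bp\in H_0^1(\Omega_T)$, which says $\nabla(p_{T'}-p_T)=0$ in $\mathcal{D}'(\Omega_T)$. Since $\Omega$, and therefore each $\Omega_T$, is connected (from the form \eqref{defOmega} and $f>0$), $p_{T'}-p_T$ is constant on $\Omega_T$. Working along the exhaustion $\Omega=\bigcup_{n\in\N}\Omega_n$, I would set $\tilde p_n:=p_n+c_n$ with $c_1=0$ and $c_{n+1}$ chosen so that $\tilde p_{n+1}=\tilde p_n$ on $\Omega_n$ (possible precisely because the difference is constant); the $\tilde p_n$ then agree on overlaps and patch together into a single $p\in L^2_{loc}(\Omega)$. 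Finally, for arbitrary $T>0$ and $\Bp\in H_0^1(\Omega_T)$, pick $n\ge T$, use $p=\tilde p_n=p_n+c_n$ on $\Omega_n\supset\Omega_T$, and observe that the constant $c_n$ does not affect $\int_{\Omega_T}p\,{\rm div}\,\Bp\,dx$ since $\int_{\Omega_T}{\rm div}\,\Bp\,dx=0$ for $\Bp\in H_0^1(\Omega_T)$; this gives \eqref{3-14-1}.

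The main obstacle is exactly this gluing: one must know the truncated domains are connected and organize the additive constants consistently along the exhaustion. Once that bookkeeping is in place, the rest is the standard bounded-domain pressure-recovery argument already recorded in Proposition~\ref{pressure}.
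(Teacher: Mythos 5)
Your argument is correct and follows essentially the route the paper intends: the paper simply invokes the bounded-domain pressure-recovery result (Proposition \ref{pressure}, i.e.\ \cite[Theorem III.5.3]{Ga}) on the truncations $\Omega_T$, and your proof is exactly that, with the gluing of the locally defined pressures along the exhaustion (adjusting additive constants on the connected domains $\Omega_n$) made explicit. No gaps; the compatibility/gluing step you flag is the only detail the paper leaves implicit, and you handle it correctly.
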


Actually, one can show that the Dirichlet norm of the solution $\Bu$ is uniformly bounded in any subdomain $\Omega_{t-1,t}$.
\begin{pro}\label{uniform estimate}
Let $\Bu$ be the solution obtained in Proposition \ref{straight-existence}. Then there exists a constant $C_7$ such that
\begin{equation}\label{estv4.4}
\|\nabla \Bu\|_{L^2(\Omega_{t-1,t})}^2\leq C_7  \quad \text{for any}\,\, t\in \mathbb{R}.
\end{equation}
\end{pro}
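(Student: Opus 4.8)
The plan is to upgrade the linear growth estimate \eqref{3-27} of Proposition~\ref{straight-existence} to a slab‑uniform bound by means of a Caccioppoli‑type (reverse Poincar\'e) local energy inequality. Throughout I would write $\Bv=\Bu-\Bg$, so that $\Bv$ solves \eqref{NS1}; by Lemma~\ref{lemma1} one has $\|\Bg\|_{C^1(\Omega)}\le C$ and $\int_{\Omega_{a,b}}(|\nabla\Bg|^2+|\Bg|^4)\,dx\le C(b-a)$ in the present bounded‑width setting, and by Proposition~\ref{straight-existence} one has $\|\nabla\Bu\|_{L^2(\Omega_t)}^2\le \overline C(1+t)$. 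Since the corresponding bound for $\Bg$ is immediate, it suffices to bound $\int_{\Omega_{t-1,t}}|\nabla\Bv|^2\,dx$ uniformly in $t$.

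First I would fix $t\in\mathbb R$ and choose a smooth $\zeta=\zeta(x_1)$ with $\zeta\equiv1$ on $[t-1,t]$, $\supp\zeta\subset[t-2,t+1]$ and $|\zeta'|\le2$. The field $\zeta^2\Bv$ is not solenoidal, but $\operatorname{div}(\zeta^2\Bv)=(\zeta^2)'v_1$ is supported in the two unit slabs $\Omega_{t-2,t-1}$, $\Omega_{t,t+1}$, and, since $\Bv$ has zero flux through every cross‑section, $\int_{\Sigma(x_1)}(\zeta^2)'v_1\,dx_2=0$ for each $x_1$; in particular it is mean‑free on each of those unit slabs. Hence Lemma~\ref{lemmaA5}, applied separately on the two unit slabs — whose geometry, and therefore whose Bogovskii constant, is controlled only by $\underline d$ and $\overline d$ — yields $\Ba\in H_0^1(\Omega_{t-2,t+1})$ with $\operatorname{div}\Ba=\operatorname{div}(\zeta^2\Bv)$ and $\|\nabla\Ba\|_{L^2}\le C\|v_1\|_{L^2(\Omega_{t-2,t-1}\cup\Omega_{t,t+1})}$, $C$ independent of $t$. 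Then $\Bp:=\zeta^2\Bv-\Ba\in H^1_{0,\sigma}(\Omega_{t-2,t+1})$, and testing the weak formulation of \eqref{NS1} with $\Bp$ isolates $\int\zeta^2|\nabla\Bv|^2\,dx$. Every term produced by $\nabla\zeta$, by $\Ba$, and by the nonlinearities is then supported in $\Omega_{t-2,t-1}\cup\Omega_{t,t+1}$, \emph{except} the term $\int\zeta^2(\Bv\cdot\nabla)\Bv\cdot\Bg\,dx$, which one exposes by integrating by parts in $\int\Bv\cdot\nabla\Bg\cdot\Bp\,dx$ (using $\operatorname{div}\Bg=0$ and that $\Bg$ vanishes near $\partial\Omega$); by Lemma~\ref{lemma1} this term is $\le C\varepsilon\Phi\int\zeta^2|\nabla\Bv|^2\,dx$, which after fixing $\varepsilon$ small is absorbed on the left. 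Estimating the rest with the uniform Poincar\'e and Sobolev constants of Lemmas~\ref{lemmaA1}--\ref{lemmaA2} on the unit slabs, with $\|\Bg\|_{C^1(\Omega)}\le C$ and $|\Omega_{t-2,t+1}|\le3\overline d$, gives a local energy inequality
\[
\int_{\Omega_{t-1,t}}|\nabla\Bv|^2\,dx\ \le\ C\Big(\mathcal J_t+\mathcal J_t^{3/2}\Big)+C,\qquad
\mathcal J_t:=\int_{\Omega_{t-2,t-1}\cup\Omega_{t,t+1}}|\nabla\Bv|^2\,dx,
\]
with $C$ independent of $t$.

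It then remains to close this against the linear growth. Writing $b_k:=\int_{\Omega_{k-1,k}}|\nabla\Bv|^2\,dx$, the inequality reads $b_k\le C(b_{k-1}+b_{k+1})+C(b_{k-1}+b_{k+1})^{3/2}+C$, while \eqref{3-27} gives $\sum_{|k|\le n}b_k\le C(1+n)$. Following \cite{LS,SWX22}, one combines these — controlling the cubic contribution in an averaged sense by the finiteness of the Ces\`aro means of $(b_k)$ and running the difference‑inequality argument of Lemma~\ref{lemmaA4}, first on $\{|k|\le R\}$ with constants independent of $R$ and then letting $R\to\infty$ — to conclude $\sup_k b_k\le C_7$, which is \eqref{estv4.4}.

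I expect the local energy inequality to be the main technical obstacle: one must estimate the pressure term and the inertial term while keeping \emph{all} constants independent of $t$. The pressure term is handled via the Bogovskii map, and the usual degeneration of its operator norm on long or non‑compact domains is avoided precisely by performing the construction slab by slab, which is legitimate only because the zero‑flux condition makes $\operatorname{div}(\zeta^2\Bv)$ mean‑free on each individual unit slab. The genuinely superlinear (cubic) inertial contribution is the remaining delicate point, and is the reason the last step cannot be a plain maximum‑principle argument on $(b_k)$: the cubic term must be absorbed through the averaged control provided by Proposition~\ref{straight-existence}.
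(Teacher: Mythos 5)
Your construction of the slab-localized energy inequality (cutoff supported on $[t-2,t+1]$, Bogovskii correction slab by slab using the zero cross-sectional flux of $\Bv$, absorption of $\int\zeta^2\Bv\cdot\nabla\Bv\cdot\Bg\,dx$ via Lemma \ref{lemma1}) is sound and close in spirit to the paper's estimates. The genuine gap is in the final step: the two facts you end up with, namely
\[
b_k\le C\bigl(b_{k-1}+b_{k+1}\bigr)+C\bigl(b_{k-1}+b_{k+1}\bigr)^{3/2}+C,
\qquad \sum_{|k|\le n}b_k\le C(1+n),
\]
do \emph{not} imply $\sup_k b_k<\infty$, and no Ces\`aro-mean or Lemma \ref{lemmaA4} argument can extract it from them alone. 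Indeed, take $k_0$ large, set $b_{k_0}=k_0$, $b_{k_0\pm j}=k_0^{(2/3)^j}$ for $1\le j\le J$ with $J\sim\log\log k_0$, and $b_k=0$ otherwise: the two-sided recursion holds at every $k$ (the $3/2$-power of the inner neighbor already dominates), while the total mass is $O(k_0)$ and is located near $x_1\approx k_0$, so the cumulative bound $\sum_{|k|\le n}b_k\le C(1+n)$ holds with a fixed constant; yet $\sup_k b_k=k_0$ is arbitrary. Moreover, Lemma \ref{lemmaA4} is a comparison principle for a \emph{monotone} function $z(t)$ satisfying a one-sided inequality against its own derivative; it does not apply to a non-monotone two-sided neighbor recursion, so the sentence ``running the difference-inequality argument of Lemma \ref{lemmaA4} on $\{|k|\le R\}$'' is not an argument.

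What the paper does instead is keep the whole one-parameter family of windows \emph{re-centered at the slab in question}: for fixed $T$ it uses the trapezoidal cutoff $\zeta_T(x,t)$ equal to $1$ on $(T-t+1,T+t-1)$ and supported in $(T-t,T+t)$, sets $y_T(t)=\int_\Omega\zeta_T|\nabla\Bv|^2\,dx$, and derives the same differential inequality $y_T(t)\le C_1\{y_T'(t)+[y_T'(t)]^{3/2}\}+C_2\int_{\Omega_{T-t,T+t}}(|\nabla\Bg|^2+|\Bg|^4)\,dx$ for \emph{all} widths $t\in[1,T]$, with the terminal datum $y_T(T)\le C_3+2C_4T$ coming from Proposition \ref{straight-existence}. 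Lemma \ref{lemmaA4}(1), applied in the width variable with $\tilde\varphi(t)=C_5+C_6t$, then gives $y_T(t)\le C_5+C_6t$ on $[1,T]$, and evaluating at the fixed small width $t=\tfrac32$ yields the uniform bound $\int_{\Omega_{T-1/2,T+1/2}}|\nabla\Bv|^2\,dx\le C_5+\tfrac32C_6$. The information your reduction throws away — the inequality for windows of every width centered at $T$, which rules out exactly the ``spike of height $\sim T$ near $x_1=T$'' scenario above — is what closes the argument; so to repair your proof you should run your Caccioppoli estimate for the full family of cutoffs centered at the target slab rather than only for the unit-width one.
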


With the help of the uniform estimate given in Proposition \ref{uniform estimate}, we can prove the uniqueness of the solution when the flux is sufficiently small.
\begin{pro}\label{uniqueness}
There exists a constant $\Phi_0>0$ such that for any flux $\Phi\in [0,\Phi_0)$, the solution obtained in Proposition \ref{straight-existence} is unique.
\end{pro}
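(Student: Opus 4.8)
The plan is a standard energy/contradiction argument, adapted to the channel geometry using the uniform local estimate of Proposition~\ref{uniform estimate}. Suppose $\Bu_1 = \Bv_1 + \Bg$ and $\Bu_2 = \Bv_2 + \Bg$ are two solutions of \eqref{NS}-\eqref{BC} and \eqref{flux constraint} satisfying the growth bound \eqref{1-4} (equivalently \eqref{3-27}); since they share the same flux, $\Bw := \Bu_1 - \Bu_2 = \Bv_1 - \Bv_2 \in H_\sigma(\Omega)$ has zero flux across every $\Sigma(x_1)$ and vanishes on $\partial\Omega$. Subtracting the weak formulations (Proposition~\ref{pressure1}) and using the identity $\Bu_1\cdot\nabla\Bu_1 - \Bu_2\cdot\nabla\Bu_2 = \Bw\cdot\nabla\Bu_1 + \Bu_2\cdot\nabla\Bw$, one finds that for any $\Bp \in H_{0,\sigma}^1(\Omega_T)$,
\[
\int_{\Omega}\nabla\Bw:\nabla\Bp + (\Bw\cdot\nabla\Bu_1 + \Bu_2\cdot\nabla\Bw)\cdot\Bp\,dx = 0.
\]
First I would localize: take $\Bp = \zeta_t^2 \Bw$ where $\zeta_t$ is a Lipschitz cutoff equal to $1$ on $\Omega_{t}$, supported in $\Omega_{t+1}$, with $|\nabla\zeta_t| \le C$ — but note $\zeta_t^2\Bw$ is not divergence-free, so instead one uses the Bogovskii correction $\Bp = \zeta_t^2 \Bw - \Ba_t$ where $\operatorname{div}\Ba_t = \nabla(\zeta_t^2)\cdot\Bw$ on the collar $\Omega_{t,t+1}\cup\Omega_{-t-1,-t}$, with $\|\nabla\Ba_t\|_{L^2} \le C\|\Bw\|_{L^2(\text{collar})}$ by Lemma~\ref{lemmaA5} (the constant being uniform in $t$ by the same argument as in \cite{SWX22}).

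The core of the argument is the energy inequality obtained this way:
\[
\int_{\Omega}\zeta_t^2|\nabla\Bw|^2\,dx \le \left|\int_{\Omega}\zeta_t^2\,\Bw\cdot\nabla\Bu_1\cdot\Bw\,dx\right| + (\text{collar terms involving } |\nabla\zeta_t|).
\]
The key term $\int \zeta_t^2 \Bw\cdot\nabla\Bu_1\cdot\Bw$ is controlled by writing $\Bu_1 = \Bv_1 + \Bg$. For the $\Bg$-part, integration by parts moves a derivative off $\Bg$, and Lemma~\ref{lemma1} (the Hardy-type bound $\int \Bg^2 w^2 \le C\Phi^2\varepsilon^2 \int|\partial_{x_2}w|^2$, valid for $\Bw$ vanishing on $\partial\Omega$) gives a bound $\le C\Phi\varepsilon\int\zeta_t^2|\nabla\Bw|^2$, absorbable for small $\Phi$ (or small $\varepsilon$, which is already fixed by $\Phi$). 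For the $\Bv_1$-part, one uses $\|\nabla\Bv_1\|_{L^2(\Omega_{k,k+1})} \le C_7$ uniformly (Proposition~\ref{uniform estimate}), together with the two-dimensional interpolation $\|\Bw\|_{L^4(\Omega_{k,k+1})}^2 \le C\|\Bw\|_{L^2}\|\nabla\Bw\|_{L^2}$ and the Poincaré inequalities of Lemma~\ref{lemmaA1} (applicable since $\Bw\cdot\Bn = 0$ on $\partial\Omega$ and the horizontal flux of $\Bw$ is zero), to get, on each unit block, $|\int_{\Omega_{k,k+1}}\Bw\cdot\nabla\Bu_1\cdot\Bw| \le C\|\nabla\Bv_1\|_{L^2(\Omega_{k,k+1})}\|\nabla\Bw\|_{L^2(\Omega_{k,k+1})}^2 \le CC_7\|\nabla\Bw\|_{L^2(\Omega_{k,k+1})}^2$; this is \emph{not} small per block, which is the crux of the difficulty.

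To close the estimate I would set $y(t) = \int_\Omega \zeta_t^2|\nabla\Bw|^2\,dx$ and argue that the collar contributions are controlled by $y'(t) = \int_{\text{collar}}|\nabla\Bw|^2$ plus lower-order pieces handled by Bogovskii and Poincaré, arriving at a differential inequality of the form $y(t) \le C_1(y'(t) + y'(t)^{3/2}) + \theta\, y(t)$ on the part where the $\Bv_1$ coefficient is genuinely small, but since $\|\nabla\Bv_1\|_{L^2(\Omega_{k,k+1})}$ need not be small we cannot simply absorb $CC_7 y(t)$. The resolution — and the main obstacle — is to exploit that $\Phi$ small forces $C_7$ small: by the Remark after Proposition~\ref{straight-existence}, for $\Phi \in [0,\Phi_0)$ the constants $C_3, C_4, \tilde C$ (hence $C_5, C_6$ and ultimately $C_7$) are all $\le C\Phi^2$, so $CC_7 \le C\Phi^2 < 1/2$ for $\Phi$ small. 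Then the inequality becomes $y(t) \le C_1(y'(t) + y'(t)^{3/2})$ for all $t$ with $y(T) \le \tilde C(1+T)$ sublinear, so Lemma~\ref{lemmaA4}(2) — comparing $y$ against the solution $\tilde z(t)$ of $\tilde z = \delta_1^{-1}\Psi(\tilde z')$, which grows faster than linearly — forces $y(t)\equiv 0$, i.e., $\Bw \equiv 0$. I expect verifying the uniformity of all Bogovskii and Korn/Poincaré constants across the collars (so that $C_1$ does not degenerate as $t\to\infty$), and bookkeeping the dependence of every constant on $\Phi$ so that the final coefficient is $<1$, to be the delicate points; the rest is routine given Lemmas~\ref{lemmaA1}--\ref{lemmaA5} and Proposition~\ref{uniform estimate}.
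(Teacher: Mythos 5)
Your proposal is correct and follows essentially the same route as the paper's proof: test the difference equation with a linear cutoff, handle the non-solenoidal/pressure contribution on the collars via Bogovskii (using the zero flux of the difference), bound the bulk trilinear term block-by-block using the uniform local Dirichlet estimate of Proposition \ref{uniform estimate} together with Lemmas \ref{lemmaA1}--\ref{lemmaA2}, absorb it because those constants are $O(\Phi)$-small for small flux, and conclude from the resulting inequality $y\leq C\{y'+(y')^{3/2}\}$ and Lemma \ref{lemmaA4} that a nontrivial difference would grow like $t^3$, contradicting the linear growth class \eqref{1-4}. The only cosmetic deviations (squared cutoff with a corrected test function instead of the pressure identity \eqref{3-14-1}, Hardy treatment of the $\Bg$-part, and invoking part (2) rather than part (3) of Lemma \ref{lemmaA4}) do not change the argument.
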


In particular, if an outlet of the channel is straight, for example,
\begin{equation*}
\Sigma(x_1)=\Sigma^\sharp(x_1):=(-1,1)\ \ \ \text{ when $x_1>0$},
\end{equation*}
we shall show that the solution obtained in Proposition \ref{straight-existence} tends to Poiseuille flow $\BU=U(x_2)\Be_1=\frac{3\Phi}{2}(1- x_2^2)\Be_1$ at infinity, where $\BU$ is the  solution of the Navier-Stokes system with Dirichlet boundary condition in the straight channel ${\Omega}^\sharp=\{(x_1,x_2):~x_1\in \R,~x_2\in (-1,1)\}$.
\begin{pro}\label{Poiseuille}
Assume that the outlet $\Omega^+=\{ x\in\Omega:~x_1>0\}= (0, +\infty) \times (-1, 1)$ is straight. There exists a constant $\Phi_1>0$, such that if  $\Phi\in [0,\Phi_1)$,  and the solution $\Bu$ of the problem \eqref{NS}-\eqref{BC} and \eqref{flux constraint}  satisfies
\begin{equation}\label{3-26}
\liminf_{t\rightarrow + \infty} t^{-3}\int_{\Omega^+_t} |\nabla \Bu|^2\,dx =0,
\end{equation}
where $\Omega^+_t=\{ x\in\Omega:~0<x_1<t\}$, then it holds that
\begin{equation*}
\|\Bu-\BU\|_{H^1(\Omega^+)}<\infty.
\end{equation*}
\end{pro}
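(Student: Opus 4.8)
The plan is to set $\Bw = \Bu - \BU$ on the straight outlet $\Omega^+ = (0,\infty)\times(-1,1)$ and derive an energy identity for $\Bw$ that yields a differential inequality for the local Dirichlet norm $y(t) = \int_{\Omega^+_t}|\nabla\Bw|^2\,dx$ (suitably truncated), and then invoke Lemma \ref{lemmaA4}(2) together with the hypothesis \eqref{3-26} to conclude $y$ is bounded. Since $\BU$ solves the stationary Navier--Stokes system in $\Omega^\sharp$ with the same flux $\Phi$, the difference $\Bw$ is divergence-free, vanishes on $\{x_2 = \pm 1\}$, has zero flux through each cross section $\Sigma^\sharp(x_1)$ for $x_1>0$, and satisfies the perturbation system
\[
-\Delta\Bw + \Bw\cdot\nabla\BU + \BU\cdot\nabla\Bw + \Bw\cdot\nabla\Bw + \nabla q = 0 \quad\text{in }\Omega^+,
\]
which is structurally identical to \eqref{3-15} with $\BU$ in the role of $\Bu$; the crucial point is that $\BU = U(x_2)\Be_1$ has Dirichlet norm on $\Omega^+_{t-1,t}$ that is \emph{uniformly bounded} in $t$ (indeed constant), so the analogue of Proposition \ref{uniform estimate} is automatic here.

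First I would fix the one-sided truncation function $\zeta(x_1,t)$ supported in $\{x_1 < t\}$, equal to $1$ on $\{x_1 < t-1\}$ and linear on $[t-1,t]$ (and also cutting off near $x_1 = 0$ in the obvious harmless way, or keeping the test function genuinely supported away from $x_1=0$ by an extra inner cutoff), and test the weak formulation of the perturbation system by $\zeta\Bw$. Integration by parts produces exactly the terms appearing in \eqref{3-16}: the good term $\int\zeta|\nabla\Bw|^2$, the ``interior'' trilinear term $\int\zeta\,\Bw\cdot\nabla\Bw\cdot\BU$, the pressure term $\int q\,w_1\,\partial_{x_1}\zeta$, and the boundary-layer terms supported in $E^+ = \{t-1 < x_1 < t\}$. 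The interior trilinear term is handled precisely as in \eqref{3-22}: partition $\Omega^+_{t-1}$ into unit-length slabs, apply Lemmas \ref{lemmaA1}--\ref{lemmaA2} on each (constants uniform because the outlet is straight), and use $\|\nabla\BU\|_{L^2(\text{slab})} = C\Phi$ to absorb it into $\tfrac12\int\zeta|\nabla\Bw|^2$ once $\Phi < \Phi_1$ is small enough. The pressure term on $E^+$ is treated via the Bogovskii map exactly as in \eqref{3-25}, using that $w_1 \in L^2_0(E^+)$ (zero flux) and that $M_5(E^+)$ is uniform on the straight outlet, to bound it by $C(\|\nabla\Bw\|_{L^2(E^+)}^2 + \|\nabla\Bw\|_{L^2(E^+)}^3)$; the remaining $E^+$ terms are bounded the same way as in \eqref{3-24-0}--\eqref{3-24}.

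Collecting everything, with $y(t) = \int_{\Omega^+}\zeta|\nabla\Bw|^2\,dx$ so that $y'(t) = \int_{E^+}|\nabla\Bw|^2\,dx$, one arrives at the autonomous differential inequality
\[
y(t) \le C\bigl(y'(t) + [y'(t)]^{3/2}\bigr), \qquad t \ge 1,
\]
i.e. $y(t)\le \Psi(y'(t))$ with $\Psi(s) = C(s + s^{3/2})$, and $\Psi(s)\le 2Cs^{3/2}$ for large $s$. Now I would apply Lemma \ref{lemmaA4}(2) with $\varphi$ a suitable linear function (or directly the dichotomy in part (2)/(3)): the solution $\widetilde z$ of $\widetilde z = \delta_1^{-1}\Psi(\widetilde z\,')$ grows like $t^3$, so since \eqref{3-26} gives $\liminf_{t\to\infty} t^{-3} y(t) = 0$, the hypothesis $\lim y(t)/\widetilde z(t) = 0$ along a sequence (equivalently $\liminf y/\varphi < 1$ for appropriate $\varphi$) forces $y(t)$ to stay bounded for all $t\ge 1$. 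Letting $t\to\infty$ then gives $\int_{\Omega^+}|\nabla\Bw|^2\,dx < \infty$, and combined with the Poincaré inequality of Lemma \ref{lemmaA1} on $\Omega^+$ (valid since $\Bw$ vanishes on the lateral boundary and has finite Dirichlet norm), this yields $\|\Bw\|_{H^1(\Omega^+)} < \infty$, i.e. $\|\Bu - \BU\|_{H^1(\Omega^+)} < \infty$.

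The main obstacle is the smallness bookkeeping needed to absorb the interior term $\int\zeta\,\Bw\cdot\nabla\Bw\cdot\BU$ into the left-hand side: this requires both the slab decomposition with constants uniform in $t$ (which is where straightness of the outlet is essential — the constants $M_1, M_4, M_5$ must not degenerate) and the observation that $\|\nabla\BU\|_{L^2}$ over a unit slab is proportional to $\Phi$, so that the threshold $\Phi_1$ can be chosen \emph{independently of $t$}. Once that uniformity is in hand, the rest is a routine adaptation of the proofs of Lemma \ref{lemma3} and Proposition \ref{uniqueness}, with $\BU$ playing the role of the background flow and the one-sided cutoff replacing the symmetric one.
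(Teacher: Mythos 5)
Your overall route is exactly the paper's: write $\oBv=\Bu-\BU$, test the weak formulation with a one-sided cutoff that ramps up on $[0,1]$ and down on $[t-1,t]$, subtract the Poiseuille pressure, absorb the interior trilinear term by a unit-slab decomposition (the slab norms of $\BU$ are proportional to $\Phi$, and the constants $M_1,M_4,M_5$ are uniform on the straight outlet), handle the pressure on $E^+=\{t-1<x_1<t\}$ via the Bogovskii map using that $\oBv$ has zero flux, and close with the differential-inequality Lemma \ref{lemmaA4} together with hypothesis \eqref{3-26}, finishing with the Poincar\'e inequality of Lemma \ref{lemmaA1}. So the structure is right; but the endgame as you wrote it has two related inaccuracies.

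First, the inner ramp on $\{0<x_1<1\}$ is not ``harmless'' in the sense of disappearing: the boundary-layer terms it produces (including a Bogovskii pressure term there) do not vanish, and they contribute a $t$-independent additive constant, so the inequality you can actually derive is $y(t)\le C\{y'(t)+[y'(t)]^{3/2}\}+C_{10}$, not $y(t)\le C\{y'(t)+[y'(t)]^{3/2}\}$. The constant-free version cannot hold in general: combined with Lemma \ref{lemmaA4}(3) and \eqref{3-26} it would force $y\equiv 0$, i.e. $\nabla(\Bu-\BU)\equiv 0$ on the plateau, which is absurd. Second, the way you invoke Lemma \ref{lemmaA4} does not give boundedness as stated: part (2) with a \emph{linear} $\varphi$ only yields $y(t)\le C(1+t)$, which is not enough for $\|\Bu-\BU\|_{H^1(\Omega^+)}<\infty$, and the alternative hypothesis in part (2) is the full limit $\lim_{t\to\infty} y(t)/\tilde z(t)=0$, whereas \eqref{3-26} only provides a liminf along a sequence. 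The correct mechanism is the dichotomy you mention only in passing: $y$ is nondecreasing in $t$, so if $y(t_*)>2C_{10}$ at some $t_*$, then $C_{10}<y/2$ for all $t\ge t_*$ and hence $y\le 2C\{y'+(y')^{3/2}\}$ there; Lemma \ref{lemmaA4}(3) with $m=\tfrac32$ then gives $\liminf_{t\to\infty}t^{-3}y(t)>0$. Since $\int_{\Omega^+_t}|\nabla\BU|^2\,dx\le C\Phi^2 t$, hypothesis \eqref{3-26} implies $\liminf_{t\to\infty}t^{-3}y(t)=0$, a contradiction; hence $y(t)\le 2C_{10}$ for all $t$, and the $H^1$ bound follows. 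With these two corrections (keep the constant $C_{10}$, and conclude via the part-(3) dichotomy rather than a linear comparison function), your argument coincides with the paper's proof.
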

Combining Propositions \ref{straight-existence}, \ref{uniqueness}, and \ref{Poiseuille} together finishes the proof of Theorem \ref{bounded channel}.

\section{Flows in channels with unbounded outlets}\label{secexist2}
In this section, we study the flows in channels with unbounded width. Recall the definition of $\beta$ which is given in \eqref{assumpf}. In the rest of this section, $(4\beta)^{-1}$ is used frequently to here and there.  For convenience, denote
\begin{equation*}
	\beta^*:=(4\beta)^{-1}.
\end{equation*}
Clearly, one has
\begin{equation*}
	\|f'\|_{L^\infty}= 2\beta=(2\beta^*)^{-1}
\end{equation*}
and
\begin{equation}\label{4-17}
	\frac12 f(t)\leq f(\xi) \leq \frac32f(t) \text{ for any }\xi\in[t-\beta^* f(t),\,  t+\beta^* f(t)].
\end{equation}
Define
\begin{equation*}
	k(t):=\int_0^t f^{-\frac53}(\xi)\,d\xi
\end{equation*}
and let $h(t)$ be the inverse function of $k(t)$. Then one has
\begin{equation*}
	t=\int_0^{h(t)}f^{-\frac53}(\xi)\,d\xi
\quad \text{and}\quad
	h'(t)=f^{\frac53}(h(t)).
\end{equation*}
Denote
\begin{equation}\label{defh}
	h_L(t)=h(-t)+\beta^*f(h(-t))  \text{ and }h_R(t)=h(t)-\beta^*f(h(t)).
\end{equation}
Direct computations give
\begin{equation}\label{4-1}
	\frac{d}{dt}h_L(t)=-h'(-t)-\beta^*f'(h(-t))h'(-t)=-[1+\beta^*f'(h(-t))]f^\frac53(h(-t))\leq -\frac{\underline{d}^\frac53}{2}
\end{equation}
and
\begin{equation}\label{4-2}
	\frac{d}{dt}h_R(t)=h'(t)-\beta^*f'(h(t))h'(t)=[1-\beta^*f'(h(t))]f^\frac53(h(-t))\ge \frac{\underline{d}^\frac53}{2}.
\end{equation}


The existence of the solutions for problem \eqref{NS}, \eqref{BC} and \eqref{flux constraint} is investigated in three cases, according to  the range of $k$.

{\bf Case 1. The range of $k(t)$ is $(-\infty, \infty)$.} In this case, the function $h(t)$ is defined on $(-\infty,\infty)$. It follows from \eqref{4-1} and \eqref{4-2} that for suitably large $t$, one has
\begin{equation*}
	h_L(t)<h_R(t).
\end{equation*}
 Then we introduce a new truncating function  $\hat{\zeta}( x,t)$ on $\Omega$ as follows,
\begin{equation} \label{cut-off-hat}
\hat{\zeta}( x,t)=\left\{
\begin{aligned}
&0,~~~~~~~~~~~~~~ &&\text{ if }x_1\in (-\infty,h(-t))\cup(h(t),\infty),\\
&\frac{h(t)-x_1}{f(h(t))},~~~~~~&&\text{ if }x_1\in [h_R(t),\, h(t)],\\
&\beta^*,~~~~~~~~~~~~~~ &&\text{ if }x_1\in (h_L(t),\, h_R(t)),\\
&\frac{-h(-t)+x_1}{f(h(-t))},~~~~~~&&\text{ if }x_1\in [h(-t),\, h_L(t)].
\end{aligned}\right.
\end{equation}
For the sake of convenience, one denotes
\begin{equation}\label{4-6}
\hat{\Omega}_{t}=\{ x\in \Omega:~x_1\in (h(-t),h(t))\}\ \ \ \text{ and } \ \ \ \breve{\Omega}_{t}=\hat{\Omega}_{t}\setminus \overline{\hat{ E}},
\end{equation}
where
$\hat{ E}=\hat{ E}^+\cup \hat{ E}^-$
with
\begin{equation}\label{4-7}
\hat{ E}^-=\{ x\in\Omega:x_1\in (h(-t),h_L(t))\},~\hat{ E}^+=\{ x\in\Omega:x_1\in (h_R(t),h(t))\}.
\end{equation}
Clearly, $\nabla \hat{\zeta}$ and $\partial_t\hat{\zeta}$ vanish outside $\hat{ E}$ and satisfy
\begin{equation}\label{4-3}
|\nabla\hat{\zeta}|=|\partial_{x_1}\hat{\zeta}|= [f(h(\pm t))]^{-1}\,\,\text{in}\,\,\hat{E}^\pm,
\end{equation}
and
\begin{equation}\label{4-4}
\partial_t\hat{\zeta}=\frac{h'(\pm t)}{f(h(\pm t))}\left[1\mp\frac{\pm h(\pm t)\mp x_1}{f(h(\pm t))}f'(h(\pm t))\right]\ge \frac12\frac{h'(\pm t)}{f(h(\pm t))}=\frac12[f(h(\pm t))]^\frac23\,\,\text{in}\,\,\hat{E}^\pm.
\end{equation}

With the help of the new truncating function $\hat{\zeta}( x,t)$, we have the following lemma which is used to prove the uniform local estimate for approximate solutions.

\begin{lemma}\label{lemma4}
Assume that the domain $\Omega$ satisfies \eqref{deffbar}, and
\begin{equation*}
	\int_{-\infty}^0 f^{-\frac53}(\tau) \, d\tau = \infty, \ \ \ \ \ \int_0^{+\infty} f^{-\frac53}(\tau) \, d\tau = \infty.
\end{equation*}
Let $\Bv^T$ be the solution of the approximate problem \eqref{aNS} on $\hat{\Omega}_T$, which is obtained in Proposition \ref{appro-existence} and satisfies the energy estimate \eqref{2-10-1}. Then there exists a positive constant $C_{15}$ independent of $t$ and $T$ such that
\begin{equation}\label{4-9}
\|\nabla \Bv^T\|_{L^2(\breve{\Omega}_{t})}^2\leq C_{15}\left(1+\int_{h(-t)}^{h(t)} f^{-3}(\tau)\,d\tau\right) \,\,\text{for any }t^* \leq t\leq T,
\end{equation}
 where
\begin{equation}\label{deft*}
t^*=\sup\{t>0:h_L(t)\ge  h_R(t)\}.
\end{equation}
\end{lemma}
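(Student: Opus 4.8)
The plan is to run the scheme of Lemma \ref{lemma3}, but with the $f$-adapted cut-off $\hat{\zeta}(\cdot,t)$ from \eqref{cut-off-hat} in place of $\zeta$, keeping careful track of the weights $F_\pm:=f(h(\pm t))$ that now enter through $|\nabla\hat{\zeta}|=F_\pm^{-1}$ on $\hat{E}^\pm$ (cf. \eqref{4-3}) and $\partial_t\hat{\zeta}\ge\tfrac12 F_\pm^{2/3}$ on $\hat{E}^\pm$ (cf. \eqref{4-4}). Throughout I drop the superscript $T$ and fix $\varepsilon$ small (depending only on $\Phi$, via Lemma \ref{lemma1}) so that the Hardy-type term below is absorbable. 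Testing the pressure form \eqref{2-11} on $\hat{\Omega}_T$ with $\Bp=\hat{\zeta}\Bv$ — admissible since $\hat{\zeta}\Bv\in H_0^1(\hat{\Omega}_T)$, although $\operatorname{div}(\hat{\zeta}\Bv)=v_1\partial_{x_1}\hat{\zeta}\neq0$, which is why the pressure term genuinely appears — and integrating by parts using $\Bv=0$ on $\partial\Omega$ and $\hat{\zeta}(\cdot,t)=0$ on $\{x_1=h(\pm t)\}$, I obtain an energy identity for $\int_{\hat{\Omega}_T}\hat{\zeta}|\nabla\Bv|^2\,dx$ whose right-hand side consists of the Hardy-type term $\int\hat{\zeta}\,\Bv\cdot\nabla\Bv\cdot\Bg$, the lower-order bulk term $\int\hat{\zeta}(-\nabla\Bg:\nabla\Bv+\Bg\cdot\nabla\Bv\cdot\Bg)$, and boundary-type integrals over $\hat{E}=\hat{E}^+\cup\hat{E}^-$ carrying the factor $\partial_{x_1}\hat{\zeta}$, among them the pressure integral $\int_{\hat{E}}p\,v_1\,\partial_{x_1}\hat{\zeta}$.

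Each piece is estimated as in Lemma \ref{lemma3}. By the Hardy inequality of Lemma \ref{lemma1} (applied componentwise to $\Bv$, which vanishes on $\partial\Omega$, using that $\hat{\zeta}$ depends only on $x_1$), the Hardy-type term is $\le C\varepsilon\Phi\int\hat{\zeta}|\nabla\Bv|^2\le\tfrac12\int\hat{\zeta}|\nabla\Bv|^2$; by Young's inequality together with $\hat{\zeta}\le\beta^*$ and Lemma \ref{lemma1}, the bulk term is $\le\tfrac14\int\hat{\zeta}|\nabla\Bv|^2+C(\Phi^2+\Phi^4)\int_{h(-t)}^{h(t)}f^{-3}(\tau)\,d\tau$. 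On $\hat{E}^\pm$ I use Lemmas \ref{lemmaA1}--\ref{lemmaA2} combined with \eqref{4-17}, which yields $M_1(\hat{E}^\pm)\le CF_\pm$, $M_4(\hat{E}^\pm)^2\le CF_\pm$, $|\hat{E}^\pm|\le CF_\pm^2$, $|\partial_{x_1}\hat{\zeta}|=F_\pm^{-1}$, $|\Bg|\le C\Phi F_\pm^{-1}$ on $\hat{E}^\pm$, and $\|\nabla\Bg\|_{L^2(\hat{E}^\pm)}^2+\|\Bg\|_{L^4(\hat{E}^\pm)}^4\le C(\Phi^2+\Phi^4)\int_{\hat{E}^\pm}f^{-3}$ (Lemma \ref{lemma1}); the key point is that all powers of $F_\pm$ then cancel and the full collection of $\hat{E}$-terms, pressure included, is bounded by
\[
C\sum_{\pm}\left(\|\nabla\Bv\|_{L^2(\hat{E}^\pm)}^2+F_\pm\|\nabla\Bv\|_{L^2(\hat{E}^\pm)}^3\right)+C(\Phi^2+\Phi^4)\int_{h(-t)}^{h(t)}f^{-3}(\tau)\,d\tau.
\]
The pressure term is handled as in Lemma \ref{lemma3}: since $\int_{\Sigma(x_1)}v_1\,ds=0$ one has $v_1\in L_0^2(\hat{E}^\pm)$, so Lemma \ref{lemmaA5} provides $\Ba\in H_0^1(\hat{E}^\pm)$ with $\operatorname{div}\Ba=v_1$ and $\|\nabla\Ba\|_{L^2(\hat{E}^\pm)}\le M_5(\hat{E}^\pm)\|v_1\|_{L^2(\hat{E}^\pm)}$, after which one uses \eqref{2-11} with $\Bp=\Ba$ and integration by parts on the $\Bg$-terms. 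For uniformity one needs $M_5(\hat{E}^\pm)\le M_5^*$ independently of $t$; this is where \eqref{4-17} and the uniform bounds $\|f_i'\|_{C(\mathbb{R})}\le2\beta$, $f\ge\underline{d}$ are decisive: the rescaled domain $F_\pm^{-1}\hat{E}^\pm$ has bounded geometry, can be written as a union of a bounded number of subdomains, each star-shaped with respect to a ball whose radius is bounded below, so the constant $C_D$ in \eqref{A5-3} is uniform and, $M_5$ being scale-invariant, the claim follows.

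Now set $y(t):=\int_{\hat{\Omega}_T}\hat{\zeta}(\cdot,t)|\nabla\Bv|^2\,dx$, which equals $\int_{\hat{\Omega}_t}\hat{\zeta}|\nabla\Bv|^2\,dx$ since $\hat{\zeta}(\cdot,t)$ is supported in $\hat{\Omega}_t$; then $y'(t)=\int_{\hat{E}}\partial_t\hat{\zeta}\,|\nabla\Bv|^2\,dx$, and \eqref{4-4} gives $\|\nabla\Bv\|_{L^2(\hat{E}^\pm)}^2\le2F_\pm^{-2/3}y'(t)\le2\underline{d}^{-2/3}y'(t)$ as well as $F_\pm\|\nabla\Bv\|_{L^2(\hat{E}^\pm)}^3\le2\sqrt{2}\,(y'(t))^{3/2}$. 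Collecting all the estimates and absorbing $\tfrac34 y(t)$ from the left, one reaches the differential inequality
\[
y(t)\le C_1\left(y'(t)+(y'(t))^{3/2}\right)+C_2\left(1+\int_{h(-t)}^{h(t)}f^{-3}(\tau)\,d\tau\right),\qquad t^*\le t\le T,
\]
with $C_1,C_2$ depending only on $\Phi$ and $\Omega$. I then apply Lemma \ref{lemmaA4}(1) with $\Psi(s)=C_1(s+s^{3/2})$, $\delta_1=\tfrac12$, and $\varphi(t)=C_3+C_4\int_{h(-t)}^{h(t)}f^{-3}(\tau)\,d\tau$, where the large constant is kept additive: since $h'(\pm t)=f^{5/3}(h(\pm t))$ one computes $\varphi'(t)=C_4(f^{-4/3}(h(t))+f^{-4/3}(h(-t)))\le2C_4\underline{d}^{-4/3}$, which stays bounded by a fixed constant, so $\varphi(t)\ge C_3\ge\delta_1^{-1}\Psi(\varphi'(t))$ for all $t$ once $C_3$ is chosen large enough depending on $C_4$, while $(1-\delta_1)\varphi\ge C_2(1+\int f^{-3})$ holds once $C_3,C_4\ge2C_2$; moreover $y(T)\le\varphi(T)$ follows from \eqref{2-10-1} and Lemma \ref{lemma1} after possibly enlarging $C_4$. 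Lemma \ref{lemmaA4}(1) thus gives $y(t)\le\varphi(t)\le(C_3+C_4)(1+\int_{h(-t)}^{h(t)}f^{-3})$ on $[t^*,T]$, and since $\hat{\zeta}\equiv\beta^*=(4\beta)^{-1}$ on $\breve{\Omega}_t$ this yields $\|\nabla\Bv^T\|_{L^2(\breve{\Omega}_t)}^2=(\beta^*)^{-1}\int_{\breve{\Omega}_t}\hat{\zeta}|\nabla\Bv|^2\,dx\le4\beta(C_3+C_4)(1+\int_{h(-t)}^{h(t)}f^{-3})$, i.e. \eqref{4-9} with $C_{15}=4\beta(C_3+C_4)$. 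The main obstacle is the uniform bound $M_5(\hat{E}^\pm)\le M_5^*$ on the Bogovskii constant — together with, of comparable weight, the cancellation of all $F_\pm$-powers that delivers the scale-free differential inequality above; the choice of $\varphi$ keeping the large constant additive, so that $\varphi'$ stays a fixed constant, is precisely what makes Lemma \ref{lemmaA4}(1) applicable.
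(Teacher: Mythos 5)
Your proposal is correct and follows essentially the same route as the paper: test \eqref{2-11} with $\hat{\zeta}\Bv$, absorb the Hardy-type term via Lemma \ref{lemma1}, control the $\hat{E}^\pm$ terms (pressure included, via the Bogovskii field with $t$-uniform $M_5(\hat{E}^\pm)$) using the scalings $M_1(\hat{E}^\pm)\sim f(h(\pm t))$, $M_4(\hat{E}^\pm)\sim f(h(\pm t))^{1/2}$ so the weights cancel, and then close with the differential inequality $\hat y\le C(\hat y'+(\hat y')^{3/2})+C\int_{h(-t)}^{h(t)}f^{-3}$ and Lemma \ref{lemmaA4}(1) with the comparison function $\varphi(t)=C_{13}+C_{14}\int_{h(-t)}^{h(t)}f^{-3}(\tau)\,d\tau$, exactly as in the paper. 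Your additional remarks (the rescaling justification for the uniform Bogovskii constant and the explicit bookkeeping of the $F_\pm$-powers) only elaborate points the paper states more tersely.
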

\begin{proof}
The superscript $T$ will be omitted throughout the proof. The proof is quite similar to that for Lemma \ref{lemma3}. Taking the test function $\Bp=\hat{\zeta} \Bv$ in \eqref{2-11} yields
\begin{equation}\label{4-10}
\begin{aligned}
	\int_{\hat{\Omega}_T}\hat{\zeta} |\nabla\Bv|^2\,dx= &\int_{\hat{\Omega}_T }\hat{\zeta} \Bv\cdot\nabla\Bv\cdot \Bg+ \hat{\zeta}  (-\nabla \Bg:\nabla \Bv+\Bg\cdot\nabla\Bv\cdot \Bg)\,dx +\int_{\hat{E}} pv_1\partial_{x_1}\hat{\zeta} \,dx\\
	&+\int_{\hat{E}} \left[\frac12|\Bv|^2(g_1+v_1) +  (g_1+ v_1)\Bv\cdot \Bg -  \partial_{x_1} (\Bg+\Bv)\cdot \Bv  \right] \partial_{x_1}\hat{\zeta} \,dx.
\end{aligned}
\end{equation}

First, using Lemma \ref{lemma1} and choosing sufficiently small $\varepsilon$, one has 
\begin{equation}\label{4-30}
	\begin{aligned}
		\left|\int_{\hat{\Omega}_T }\hat{\zeta}\Bv\cdot\nabla\Bv\cdot\Bg\,dx\right|\leq&
		\left(\int_{\hat{\Omega}_T}\hat{\zeta}|\Bv|^2|\Bg|^2 \,dx\right)^{\frac{1}{2}}\cdot\left(\int_{\hat{\Omega}_T}\hat{\zeta}|\nabla\Bv|^2\,dx\right)^{\frac{1}{2}}\\
		\leq&\frac{1}{2} \int_{\hat{\Omega}_T}\hat{\zeta}|\nabla\Bv|^2\,dx.
	\end{aligned}
\end{equation}
Then using Young's inequality gives 
\begin{equation}\label{4-15}
	\left|\int_{\hat{\Omega}_T}\hat{\zeta} (-\nabla \Bg:\nabla \Bv+\Bg\cdot\nabla\Bv\cdot \Bg)\,dx\right|\leq \frac{1}{4}\int_{\hat{\Omega}_T}\hat{\zeta}|\nabla\Bv|^2\,dx+C \int_{\hat{\Omega}_t} | \nabla \Bg|^2 + |\Bg|^4 \, dx.
	\end{equation}

Furthermore, by Lemma \ref{lemma1}, one has 
\[\|\Bg\|_{L^\infty(\hat{E}^\pm)}\leq [f(h(\pm t))]^{-1}.\]
This, together with \eqref{4-3} and Lemmas \ref{lemmaA1}-\ref{lemmaA2}, shows that
\begin{equation}\label{4-11}
	\begin{aligned}
		 &\int_{\hat{ E}^\pm}\frac12 (v_1+g_1)|\Bv|^2\partial_{x_1}\hat{\zeta}\,dx \\
	 \leq&\frac12 [f(h(\pm t))]^{-1} (\|\Bv\|_{L^4(\hat{ E}^\pm)}^2+\|\Bg\|_{L^\infty(\hat{E}^\pm)}\|\Bv\|_{L^2(\hat{E}^\pm)})\|\Bv\|_{L^2(\hat{ E}^\pm)}\\
		\leq&C[f(h(\pm t))]^{-2}M_1^2(\hat{E}^\pm) \|\nabla \Bv\|_{L^2(\hat{ E}^\pm)}^2+C[f(h(\pm t))]^{-1}M_1(\hat{E}^\pm)M_4^2(\hat{E}^\pm)\|\nabla \Bv\|_{L^2(\hat{ E}^\pm)}^3,
	\end{aligned}
	\end{equation}
	\begin{equation}
		\begin{aligned}
			\int_{\hat{E}^\pm} (g_1+ v_1)\Bv\cdot \Bg  \partial_{x_1}\hat{\zeta}\,dx \leq &[f(h(\pm t))]^{-1}( \|\Bv\|_{L^2(E^\pm)}\|\Bg\|_{L^4(\hat{E}^\pm)}^2+ \|\Bg\|_{L^\infty(\hat{E}^\pm)}\|\Bv\|_{L^2(\hat{E}^\pm)}^2)\\
			\leq&  C[f(h(\pm t))]^{-2}M_1^2(\hat{E}^\pm)\|\nabla\Bv\|_{L^2(\hat{E}^\pm)}^2+ C\|\Bg\|_{L^4(\hat{E}^\pm)}^4,
		\end{aligned}
	\end{equation}
	and 
	\begin{equation}\label{4-12}
		\begin{aligned}
			&\int_{\hat{E}^\pm}  -\partial_{x_1} (\Bg+\Bv)\cdot \Bv  \partial_{x_1}\hat{\zeta}\,dx\\
			 \leq& C[f(h(\pm t))]^{-1}(\|\Bv\|_{L^2(\hat{E}^\pm)}\|\nabla \Bv\|_{L^2(\hat{E}^\pm)}+ C\|\nabla\Bg\|_{L^2(\hat{E}^\pm)}\|\Bv\|_{L^2(\hat{E}^\pm)})\\
			\leq& C([f(h(\pm t))]^{-2}M_1^2(\hat{E}^\pm)+[f(h(\pm t))]^{-1}M_1(\hat{E}^\pm))\|\nabla\Bv\|_{L^2(\hat{E}^\pm)}^2 + C\|\nabla\Bg\|_{L^2(\hat{E}^\pm)}^2.
		\end{aligned}
	\end{equation}
	Finally, one applies Lemmas \ref{lemmaA1}-\ref{lemmaA5} and integration by parts to conclude
\begin{equation*}
	\begin{aligned}
		&\left|\int_{\hat{ E}^\pm}pv_1\partial_{x_1}\hat{\zeta}\,dx\right|=[f(h(\pm t))]^{-1}\left|\int_{\hat{ E}^\pm}p{\rm div}\,\Ba\,dx\right|\\
		=&[f(h(\pm t))]^{-1}\left|\int_{\hat{ E}^\pm}\nabla \Bv:\nabla\Ba+(\Bv\cdot \nabla \Bg +(\Bg+\Bv)\cdot\nabla \Bv-\Delta \Bg+\Bg\cdot \nabla \Bg)\cdot \Ba\,dx\right|\\
		=&[f(h(\pm t))]^{-1}\left|\int_{\hat{ E}^\pm}\nabla \Bv:\nabla\Ba-\Bv\cdot \nabla \Ba\cdot \Bg -(\Bg+\Bv)\cdot\nabla \Ba\cdot \Bv+\nabla \Bg:\nabla \Ba-\Bg\cdot \nabla \Ba\cdot\Bg\,dx\right|\\
		\le &C[f(h(\pm t))]^{-1}\|\nabla\Ba\|_{L^2(\hat{ E}^\pm)}\left(\|\nabla \Bv\|_{L^2(\hat{ E}^\pm)}+\|\Bv\|_{L^4(\hat{ E}^\pm)}^2+\|\nabla\Bg\|_{L^2(\hat{ E}^\pm)}+\|\Bg\|_{L^4(\hat{ E}^\pm)}^2\right)\\
		\leq&C[f(h(\pm t))]^{-1}M_5(\hat{E}^\pm)M_1(\hat{E}^\pm)\|\nabla \Bv\|_{L^2(\hat{ E}^\pm)}\left(\|\nabla \Bv\|_{L^2(\hat{ E}^\pm)}+M_4^2(\hat{E}^\pm)\|\nabla\Bv\|_{L^2(\hat{ E}^\pm)}^2\right.\\
		&\left.+\|\nabla\Bg\|_{L^2(\hat{ E}^\pm)}+\|\Bg\|_{L^4(\hat{ E}^\pm)}^2\right),
	\end{aligned}
\end{equation*}
where $\Ba\in H_0^1(\hat{E}^\pm)$ satisfies
\[
\text{div}~\Ba =v_1 \quad \text{in}\,\, \hat{E}^{\pm}
\]
and
\begin{equation}\label{4-5}
\|\nabla \Ba\|_{L^2(\hat{E}^\pm)}\leq M_5(\hat{E}^\pm)\|v_1\|_{L^2(\hat{E}^\pm)}.
\end{equation}
Here the constant $M_5(\hat{E}^\pm)$ in \eqref{4-5} is uniform with respect to $t$ provided $f'$ is bounded. Then it follows from using Young's inequality that
\begin{equation}\label{4-16}
	\begin{aligned}
		\left|\int_{\hat{ E}^\pm}pv_1\partial_{x_1}\hat{\zeta}\,dx\right|\leq&C[f(h(\pm t))]^{-1}M_1(\hat{E}^\pm) \left(\|\nabla \Bv\|_{L^2(\hat{ E}^\pm)}^2+M_4^2(\hat{E}^\pm)\|\nabla\Bv\|_{L^2(\hat{ E}^\pm)}^3\right)\\
		&+C[f(h(\pm t))]^{-2} M_1^2(\hat{E}^\pm) \|\nabla\Bv\|_{L^2(\hat{ E}^\pm)}^2 + C \int_{\hat{ E}^\pm} |\nabla \Bg|^2 + |\Bg|^4 \, dx .
	\end{aligned}
\end{equation}
Moreover, it follows from Lemmas \ref{lemmaA1} and \ref{lemmaA2} that there exists a uniform constant $C>0$ such that the constants $M_1(\hat{E}^\pm)$ and $M_4(\hat{E}^\pm)$ appeared in \eqref{4-11}-\eqref{4-16} satisfy
\begin{equation*}
C^{-1} f(h(\pm t)) \leq M_1(\hat{E}^\pm)\leq C f(h(\pm t))\ \ \ \mbox{and}\ \ \ C^{-1} [f(h(\pm t))]^\frac12 \leq M_4(\hat{E}^\pm)\leq C [f(h(\pm t))]^\frac12.
\end{equation*}
Define
\begin{equation*}
\hat{y}(t)=\int_{\hat{\Omega}_T}\hat{\zeta} |\nabla \Bv|^2\,dx.
\end{equation*}
By virtue of \eqref{4-4}, we have 
\begin{equation*}\begin{aligned}
	\hat{y}'(t)=&\int_{\hat{\Omega}_T}\partial_t\hat{\zeta} |\nabla \Bv|^2\,dx
	\ge\frac12 [f(h(-t))]^\frac23\int_{\hat{ E}^-} |\nabla \Bv|^2\,dx
	+\frac12 [f(h(t))]^\frac23\int_{\hat{ E}^+} |\nabla \Bv|^2\,dx.
\end{aligned}
\end{equation*}
Using Lemma \ref{lemma1}, one can combine \eqref{4-10}-\eqref{4-16} to conclude
\begin{equation*}
\begin{aligned}
	\hat{y}(t)
	\leq& C\|\nabla \Bv\|_{L^2(\hat{E})}^2+Cf(h(-t))\|\nabla \Bv\|_{L^2(\hat{E}^-)}^3+Cf(h(t))\|\nabla \Bv\|_{L^2(\hat{E}^+)}^3+C\int_{\hat{\Omega}_t } |\nabla \Bg|^2 + |\Bg|^4 \, dx\\
	\leq& C\left\{[f(h(-t))^{-\frac23}+f(h(t))^{-\frac23}]\hat{y}'(t)+  [\hat{y}'(t)]^\frac32\right\}+C\int_{h(-t)}^{h(t)} f^{-3}(\tau)\,d\tau\\
	\leq &C_{11}\left\{\hat{y}'(t)+  [\hat{y}'(t)]^\frac32 \right\}+C_{12}\int_{h(-t)}^{h(t)} f^{-3}(\tau)\,d\tau.
\end{aligned}
\end{equation*}

Define
\begin{equation*}
\hat\Psi(\tau)=C_{11}\left(\tau+ \tau^\frac{3}{2}  \right)\quad
\text{and}
\quad
\hat\varphi(t)=C_{13}+C_{14}\int_{h(-t)}^{h(t)} f^{-3}(\tau)\,d\tau,
\end{equation*}
where $C_{13}$ and $C_{14}$ are large enough such that
\begin{equation*}
C_{12}\int_{h(-t)}^{h(t)} f^{-3}(\tau)\,d\tau\leq \frac12\varphi(t)\ \  \text{ and }\ \
\hat\varphi(t)\ge 2 \hat\Psi(\hat\varphi'(t))\quad \text{for any }t\geq t^*.
\end{equation*}
 This holds since
\begin{equation*}
\begin{aligned}
	|\hat\varphi'(t)|=&C_{14}\left|\frac{d}{dt}\int_{h(-t)}^{h(t)}f^{-3}(\tau)\,d\tau\right|
	=C_{14} \left|\frac{h'( t)}{[f(h( t))]^3} + \frac{h'(-t)}{[f(h(-t))]^3}\right| \\
	\leq& C_{14}[f(h(t))]^{-\frac43} +C_{14}[f(h(-t))]^{-\frac43} \\
	\leq& 2 C_{14} \underline{d}^{-\frac43},
\end{aligned}
\end{equation*}
where $d$ is defined in \eqref{assumpf}. The estimate \eqref{2-10-1} shows
\begin{equation*}
\hat{y}(T)=\|\hat{\zeta}(\cdot, T)^\frac12\nabla \Bv\|_{L^2(\Omega)}^2\leq C_0\int_{\hat{\Omega}_T} | \nabla \Bg|^2 + |\Bg|^4 \, dx \leq \hat\varphi(T),
\end{equation*}
provided  $C_{13}$ and $C_{14}$ are large enough. Hence it follows from Lemma \ref{lemmaA4}  that for any $ t^*\leq t\leq T$, one has
\begin{equation*}
\hat{y}(t)=\|\hat{\zeta}(\cdot, t)^\frac12\nabla \Bv \|_{L^2(\Omega )}^2\leq C_{13}+C_{14}\int_{h(-t)}^{h(t)} f^{-3}(\tau)\,d\tau.
\end{equation*}
 In particular, one has
\begin{equation*}
\|\nabla \Bv \|_{L^2(\breve{\Omega}_t )}^2\leq C_{13}(\beta^*)^{-1}+C_{14}(\beta^*)^{-1}\int_{h(-t)}^{h(t)} f^{-3}(\tau)\,d\tau.
\end{equation*}
This finishes the proof of the lemma.
\end{proof}

With the help of Lemma \ref{lemma4}, one could find at least one  solution of \eqref{NS1} in a way similar to Proposition \ref{straight-existence}.
\begin{pro}\label{unbounded exits-1}  Assume that the domain $\Omega$ satisfies the assumptions in Lemma \ref{lemma4},  the problem \eqref{NS}-\eqref{BC} and \eqref{flux constraint} has a solution $\Bu=\Bv+\Bg\in H_\sigma(\Omega)$ satisfying
\begin{equation}
\|\nabla \Bu\|_{L^2(\breve{\Omega}_{t})}^2\leq C_{16}\left(1+\int_{h(-t)}^{h(t)} f^{-3}(\tau)\,d\tau\right),
\end{equation}
where the constant $C_{16}$ depends only on  $\Phi$ and $\Omega$.
\end{pro}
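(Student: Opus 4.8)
The plan is to repeat the limiting argument of Proposition~\ref{straight-existence}, now along the exhaustion of $\Omega$ by the domains $\hat\Omega_T=\Omega_{h(-T),h(T)}$, using the $T$-uniform local estimate of Lemma~\ref{lemma4}.

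First, for each $T>t^*$ let $\Bv^T\in H^1_{0,\sigma}(\hat\Omega_T)$ be the weak solution of the approximate problem \eqref{aNS} on $\hat\Omega_T$ furnished by Proposition~\ref{appro-existence}, which in addition satisfies \eqref{2-10-1}; extend it by zero to all of $\Omega$. Since in Case~1 the range of $k$ is $\R$, the inverse $h$ is defined on $\R$ with $h(\mp T)\to\mp\infty$, so $\{\hat\Omega_T\}$ exhausts $\Omega$; by \eqref{4-2} the endpoint $h_R$ is strictly increasing with $h_R(t)<h(t)$, hence every relatively compact $\Omega_{a,b}\subset\Omega$ is contained in some $\breve\Omega_{t}$ and, by Lemma~\ref{lemma4}, $\|\nabla\Bv^T\|_{L^2(\breve\Omega_t)}^2\le C_{15}\big(1+\int_{h(-t)}^{h(t)}f^{-3}\big)$ for all $T\ge t$, uniformly. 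Combining with Lemma~\ref{lemmaA1} to control the full $H^1$ norm, the $\Bv^T$ are bounded in $H^1(\Omega_{a,b})$ for every such $\Omega_{a,b}$; a diagonal argument then extracts a subsequence converging to some $\Bv$ weakly in $H^1_{loc}(\Omega)$ and, by the compact embedding $H^1\hookrightarrow L^4$ of Lemma~\ref{lemmaA2}, strongly in $L^4_{loc}(\Omega)$.

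Next I would verify that $\Bv$ is a weak solution of \eqref{NS1}. The limit is divergence-free, vanishes on $\partial\Omega$, and inherits $\int_{\Sigma(x_1)}\Bv\cdot\Bn\,ds=0$, so $\Bv\in H_\sigma(\Omega)$. Fixing $\Bp\in H^1_{0,\sigma}(\Omega_S)$ and taking $T$ so large that $\Omega_S\subset\hat\Omega_T$, the identity \eqref{2-1} holds for $\Bv^T$ tested against $\Bp$; the Dirichlet and $\Bg$-terms pass to the limit by weak $H^1$ and weak $L^2$ convergence (recall $\Bg$ is smooth and bounded on $\supp\Bp$), and the convective term $\int\Bv^T\cdot\nabla\Bv^T\cdot\Bp$ passes to the limit by combining weak $H^1$ with strong $L^4$ convergence on $\supp\Bp$. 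Thus $\Bv$ solves \eqref{NS1} weakly, and by weak lower semicontinuity of the $L^2$ norm,
\[
\|\nabla\Bv\|_{L^2(\breve\Omega_t)}^2\le\liminf_{T\to\infty}\|\nabla\Bv^T\|_{L^2(\breve\Omega_t)}^2\le C_{15}\Big(1+\int_{h(-t)}^{h(t)}f^{-3}(\tau)\,d\tau\Big).
\]

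Finally, setting $\Bu=\Bv+\Bg$, the construction of $\Bg$ in \eqref{defg} shows $\Bu\in H_\sigma(\Omega)$ is a weak solution of \eqref{NS}--\eqref{BC} obeying the flux constraint \eqref{flux constraint}. For the estimate, $\breve\Omega_t\subset\hat\Omega_t=\Omega_{h(-t),h(t)}$, so Lemma~\ref{lemma1} gives $\|\nabla\Bg\|_{L^2(\breve\Omega_t)}^2\le C(\Phi^2+\Phi^4)\int_{h(-t)}^{h(t)}f^{-3}$, and the triangle inequality yields $\|\nabla\Bu\|_{L^2(\breve\Omega_t)}^2\le C_{16}\big(1+\int_{h(-t)}^{h(t)}f^{-3}(\tau)\,d\tau\big)$ with $C_{16}$ depending only on $\Phi$ and $\Omega$. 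The only point requiring care is the geometric bookkeeping --- checking that $\{\hat\Omega_T\}$ genuinely exhausts $\Omega$ and that the $T$-independence of $C_{15}$ survives passage to the limit on each fixed $\breve\Omega_t$; the extraction of subsequences, the limit in the nonlinearity, and the lower semicontinuity are routine and parallel the bounded-outlet case.
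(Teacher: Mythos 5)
Your proposal is correct and follows essentially the same route as the paper, which proves this proposition by exactly the limiting argument of Proposition \ref{straight-existence}: extend the approximate solutions $\Bv^T$ on $\hat\Omega_T$ by zero, use the $T$-uniform bound of Lemma \ref{lemma4} on each fixed $\breve\Omega_t$ to extract a weakly convergent subsequence in $H^1_{loc}(\Omega)$, pass to the limit in the weak formulation, and conclude with weak lower semicontinuity plus the bound on $\nabla\Bg$ from Lemma \ref{lemma1}. The details you supply (exhaustion of $\Omega$ by $\breve\Omega_t$ via the monotonicity of $h_L,h_R$, the diagonal extraction, and the treatment of the convective term via strong $L^4_{loc}$ convergence) are precisely the steps the paper leaves implicit.
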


Next, we prove that the solution $\Bu$ satisfies the estimate \eqref{1-9}.
\begin{pro}\label{unbounded exits-2}
Let $\Bu=\Bv+\Bg$ be the solution obtained in Proposition \ref{unbounded exits-1}. There exists a constant $C_{21}$ depending only on $\Phi$ and $\Omega$ such that for any $t\ge 0$, one has
\begin{equation}\label{est111.5}
\|\nabla \Bu\|_{L^2(\Omega_{0,t})}^2\leq C_{21}\left(1+\int_0^t f^{-3}(\tau )\,d\tau\right)
\end{equation}
and
\begin{equation}\label{est11.6}
\|\nabla \Bu\|_{L^2(\Omega_{-t,0})}^2\leq C_{21}\left(1+\int_{-t}^0 f^{-3}(\tau )\,d\tau\right).
\end{equation}
\end{pro}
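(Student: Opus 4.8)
The plan is to redo the energy estimate of Lemma~\ref{lemma4} with an \emph{asymmetric} cut-off that is frozen at a fixed location on the left and moves only with the right outlet, so that the left half of the channel never enters the bound. Since $\Bu=\Bv+\Bg$ is already a weak solution of \eqref{NS}--\eqref{BC} on all of $\Omega$, with an associated pressure (Proposition~\ref{pressure1}), this can be done directly on $\Omega$, without another limiting procedure. Two elementary facts are used throughout. (a) The substitution $\tau=h(u)$, together with $h'(u)=f^{\frac53}(h(u))$ and $f\ge\underline{d}$, gives $\int_{h(-t)}^{h(t)}f^{-3}(\tau)\,d\tau=\int_{-t}^{t}f^{-\frac43}(h(u))\,du\le 2\underline{d}^{-\frac43}t$; combined with Proposition~\ref{unbounded exits-1} this shows every local Dirichlet norm of $\Bv$ on $\breve\Omega_t$ is $O(t)$, hence $o(t^3)$. (b) On any interval of the form $[h(\sigma)-\beta^* f(h(\sigma)),\,h(\sigma)]$ one has $f\ge\tfrac12 f(h(\sigma))$ by \eqref{4-17}, so $\int f^{-3}$ over such an interval is at most $8\beta^* f(h(\sigma))^{-2}\le 8\beta^*\underline{d}^{-2}$; applied on $\hat{E}^+$ and, when $h_R(t)=s$, on $[s,h(t)]$, this bounds both $\int_{\hat{E}^+}|\na\Bg|^2+|\Bg|^4\,dx$ and $\int_s^{h(t)}f^{-3}$ by a universal constant.

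Introduce $\hat{\zeta}^+(x,t)$ equal to $0$ for $x_1<-1$ or $x_1>h(t)$, to $x_1+1$ on $[-1,0]$, to $1$ on $[0,h_R(t)]$, and to $(h(t)-x_1)/(\beta^* f(h(t)))$ on $[h_R(t),h(t)]$; it is Lipschitz and compactly supported, so $\Bp=\hat{\zeta}^+\Bv\in H_0^1(\Omega_T)$ for $T>h(t)$ is admissible. Now $\na\hat{\zeta}^+$ and $\partial_t\hat{\zeta}^+$ are supported in $\hat{E}^+\cup E_0$, where $E_0=\{-1<x_1<0\}$; on $\hat{E}^+$ one has $|\partial_{x_1}\hat{\zeta}^+|=(\beta^* f(h(t)))^{-1}$ and $\partial_t\hat{\zeta}^+\ge c\,f^{\frac23}(h(t))$ as in \eqref{4-3}--\eqref{4-4}, while $\partial_t\hat{\zeta}^+\equiv0$ on the fixed set $E_0$. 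Testing the weak formulation with pressure by $\Bp=\hat{\zeta}^+\Bv$ gives an identity of the shape \eqref{4-10}, now with error terms on $\hat{E}^+$ \emph{and} on $E_0$. The $\hat{E}^+$ terms are controlled exactly as in the proof of Lemma~\ref{lemma4} (using $\int_{\hat{E}^+}|\na\Bv|^2\lesssim f^{-\frac23}(h(t))\,(\hat{y}^+)'(t)$ with $(\hat{y}^+)'(t)=\int_\Omega\partial_t\hat{\zeta}^+|\na\Bv|^2\,dx$, and $M_1(\hat{E}^+)\asymp f(h(t))$, $M_4(\hat{E}^+)\asymp f(h(t))^{\frac12}$, $M_5(\hat{E}^+)\asymp1$), and contribute $C_1\{(\hat{y}^+)'(t)+[(\hat{y}^+)'(t)]^{\frac32}\}$ plus a constant. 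On the fixed set $E_0$ the constants $M_1(E_0),M_4(E_0),M_5(E_0)$ and the relevant norms of $\Bg$ are all fixed, and $\|\na\Bv\|_{L^2(E_0)}$ is a fixed constant because $E_0\subset\breve\Omega_\sigma$ for $\sigma$ large and Proposition~\ref{unbounded exits-1} applies there; hence the $E_0$ terms contribute only a constant. Writing $\hat{y}^+(t)=\int_\Omega\hat{\zeta}^+(x,t)|\na\Bv|^2\,dx$, after absorbing $\tfrac14\hat{y}^+$ and the term $C\varepsilon\Phi\,\hat{y}^+$ (from $\int\hat{\zeta}^+\Bv\cdot\na\Bv\cdot\Bg$, via Lemma~\ref{lemma1} with $\varepsilon$ small) into the left-hand side, one obtains, for all $t\ge t_1$ with $t_1$ fixed so that $h_R(t_1)>0$ and $h_L(t_1)\le0$,
\[
\hat{y}^+(t)\le \hat{\Psi}\big((\hat{y}^+)'(t)\big)+C_2\int_0^{h(t)}f^{-3}(\tau)\,d\tau+C_0',\qquad \hat{\Psi}(\tau)=C_1\big(\tau+\tau^{\frac32}\big).
\]

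To close, put $\hat{\varphi}(t)=C_{13}+C_{14}\int_0^{h(t)}f^{-3}$; since $\hat{\varphi}'(t)=C_{14}f^{-\frac43}(h(t))\le C_{14}\underline{d}^{-\frac43}$ is bounded, for $C_{13},C_{14}$ large the pair $(\hat{y}^+,\hat{\varphi})$ satisfies \eqref{A4-1}--\eqref{A4-2} with $\delta_1=\tfrac12$, and fact (a) gives $\hat{y}^+(t)=o(t^3)$, i.e.\ $\hat{y}^+(t)/\widetilde{z}(t)\to0$ for the solution $\widetilde{z}\sim ct^3$ of $\widetilde{z}=2\hat{\Psi}(\widetilde{z}')$; hence Lemma~\ref{lemmaA4}(2) yields $\hat{y}^+(t)\le\hat{\varphi}(t)$ for all $t\ge t_1$. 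Given $s\ge h_R(t_1)$, choose $t(s)$ with $h_R(t(s))=s$ (possible since $h_R$ increases onto $[h_R(t_1),\infty)$); then $\hat{\zeta}^+(\cdot,t(s))\equiv1$ on $\Omega_{0,s}$, so by fact (b),
\[
\|\na\Bv\|_{L^2(\Omega_{0,s})}^2\le\hat{y}^+(t(s))\le C_{13}+C_{14}\int_0^{s}f^{-3}+C_{14}\int_s^{h(t(s))}f^{-3}\le C\Big(1+\int_0^{s}f^{-3}(\tau)\,d\tau\Big).
\]
Adding $\|\na\Bg\|_{L^2(\Omega_{0,s})}^2\le C\int_0^s f^{-3}$ (Lemma~\ref{lemma1}) gives \eqref{est111.5} for $s\ge h_R(t_1)$, while for $0\le s<h_R(t_1)$ it is immediate from $\Omega_{0,s}\subset\breve\Omega_{t_1}$ and Proposition~\ref{unbounded exits-1}; and \eqref{est11.6} follows by the mirror-image argument with the left end of the cut-off frozen at $x_1=1$. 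The only genuine obstacle is bookkeeping: making the $E_0$ error terms truly independent of $t$, and securing the $o(t^3)$ hypothesis of Lemma~\ref{lemmaA4}(2) — both of which rest on the change-of-variables identity in fact (a) and on the local bound already supplied by Proposition~\ref{unbounded exits-1}.
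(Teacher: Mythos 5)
Your proposal is correct and follows essentially the same route as the paper: a one-sided cut-off frozen near $x_1=0$ and sloped over $[h_R(t),h(t)]$, the energy identity and Bogovskii/Poincar\'e bounds from Lemma \ref{lemma4} (with the fixed-strip error terms absorbed into constants via Proposition \ref{unbounded exits-1}), Lemma \ref{lemmaA4}(2) applied using the at-most-linear growth of $\int_{h(-t)}^{h(t)}f^{-3}$ to verify the $o(t^3)$ criterion, and the estimate \eqref{4-17}-type bound on $\int_{h_R(t)}^{h(t)}f^{-3}$ to pass from $h(t)$ to $h_R(t)$. The only differences (ramp on $[-1,0]$ versus $[0,1]$, cut-off normalized to $1$ versus $\beta^*$, keeping the linear term in $\Psi$) are cosmetic.
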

\begin{proof}
It's sufficient to prove \eqref{est111.5} since the proof for \eqref{est11.6} is similar.  First, for $t$ suitably large,  we introduce the following truncating function
\begin{equation*}
\hat{\zeta}^+( x,t)=\left\{
\begin{aligned}
	&0, ~~~~~~&&\text{ if }x_1\in (-\infty,\,0),\\
	&\beta^* x_1, ~~~~~~&&\text{ if }x_1\in [0,\,1],\\
	&\beta^*,~~~~~~~~~~~~~~ &&\text{ if }x_1\in (1,\,h_R(t)),\\
	&\frac{h(t)-x_1}{f(h(t))},~~~~~~&&\text{ if }x_1\in [h_R(t),\, h(t)],\\
	&0,~~~~~~~~~~~~~~ &&\text{ if }x_1\in (h(t),\, \infty),
\end{aligned}\right.
\end{equation*}
where $h_R(t)$ is defined in \eqref{defh}. Taking the test function $\Bp=\hat{\zeta}^+ \Bv$ in \eqref{2-11} and following the  proof of  Lemma \ref{lemma4}, one has
\begin{equation*}
\begin{aligned}
	\hat{y}^+\leq& C\left\{\int_{ E_0}|\nabla \Bv|^2\,dx+\left(\int_{ E_0}|\nabla \Bv|^2\,dx\right)^\frac32+(\hat{y}^+)'+[(\hat{y}^+)']^\frac32\right\}+C\int_{0}^{h(t)}f^{-3}(\tau)\,d\tau\\
	\leq& C\left\{1+[(\hat{y}^+)']^\frac32\right\}+C\int_{0}^{h(t)}f^{-3}(\tau)\,d\tau\\
	\leq& C_{17}[ (\hat{y}^+)']^\frac32 +C_{18}\left(1+\int_{0}^{h(t)}f^{-3}(\tau)\,d\tau\right),
\end{aligned}
\end{equation*}
where $E_0 = \{ x\in \Omega:\ 0<x_1 < 1 \}$ and
\begin{equation*}
\hat{y}^+(t)=\int_{\Omega}\hat{\zeta}^+|\nabla \Bv |^2\,dx.
\end{equation*}
Set
\begin{equation*}
 \delta_1=\frac12,\quad  \tilde\Psi(\tau)=C_{17}\tau^\frac32,\quad \text{and}\quad \tilde\varphi(t)=C_{19}+C_{20}\int_{0}^{h(t)}f^{-3}(\tau)\,d\tau.
\end{equation*}
Similar to the proof of Lemma \ref{lemma4}, we choose the constants $C_{19}$ and $C_{20}$ to be sufficiently large such that
\begin{equation*}
C_{18}\left(1+\int_{0}^{h(t)}f^{-3}(\tau)\,d\tau\right)\leq \frac{1}{2}\tilde\varphi(t)\ \ \text{ and }\ \ \tilde\varphi(t)\ge 2 \tilde\Psi(\tilde\varphi'(t)).
\end{equation*}
It also follows from the proof of Lemma \ref{lemma4} that one has
\begin{equation*}
\hat{y}^+(t) \leq C_{13} + C_{14}\int_{h(-t)}^{h(t)} f^{-3}(\tau)\,d\tau
\end{equation*}
and
\begin{equation*}
\left|\frac{d}{dt}\int_{h(-t)}^{h(t)} f^{-3}(\tau)\,d\tau\right|\leq  C  \underline{d}^{-\frac43}.
\end{equation*}
Hence, it holds that
\begin{equation*}
\liminf_{t\rightarrow + \infty} \frac{\hat{y}^+(t) }{\tilde{z}(t)} = 0,
\end{equation*}
where $\tilde{z}(t)=\frac{1}{108C_{17}^2}t^3$ is a nonnegative solution to the ordinary differential equation
\begin{equation*}
\tilde{z}(t)=\delta_1^{-1}\Psi(\tilde{z}'(t))=2C_{17}[\tilde{z}'(t)]^\frac32.
\end{equation*}
It follows from Lemma \ref{lemmaA4} that one has
\begin{equation}\label{4-24}
\hat{y}^+(t)\leq C_{19}+C_{20}\int_{0}^{h(t)} f^{-3}(\tau)\,d\tau.
\end{equation}
With the help of \eqref{4-17}, one has further
\begin{equation}\label{4-25}
\begin{aligned}
	\int_{0}^{h(t)} f^{-3}(\tau)\,d\tau=&\int_{0}^{h_R(t)} f^{-3}(\tau)\,d\tau+\int_{h_R(t)}^{h(t)} f^{-3}(\tau)\,d\tau\\
	\leq&\int_{0}^{h_R(t)} f^{-3}(\tau)\,d\tau+  \max_{\xi\in [h_R(t),\, h(t)]}f^{-3}(\xi )\cdot \beta^* f(h(t))\\
	\leq&\int_{0}^{h_R(t)} f^{-3}(\tau)\,d\tau+ 2^3 \beta^* f^{-2}(h(t))\\
	\leq&\int_{0}^{h_R(t)} f^{-3}(\tau)\,d\tau+  2^3 \beta^*  d ^{-2},
\end{aligned}
\end{equation}
where $h_R(t)$ is defined in \eqref{defh}. Combining  \eqref{4-24} and \eqref{4-25} yields
\begin{equation*}
\|\nabla \Bv\|_{L^2(\Omega_{0,h_R(t)})}^2\leq C_{21}\left(1+\int_{0}^{h_R(t)} f^{-3}(\tau)\,d\tau\right).
\end{equation*}
This, together with Lemma \ref{lemma1}, finishes the proof of the proposition.
\end{proof}

Hence we finish the proof for Part (i) of Theorem \ref{unbounded channel} in the case that the range of $k(t)$ is $(-\infty, \infty)$.


{\bf Case 2. The range of $k(t)$ is $(-L, \, R)$, $0< L, R< \infty.$}  In this case, it holds that
\[
\int_{-\infty}^{+\infty} f^{-\frac53}(\tau) \, d\tau= R+ L < \infty.
\]

Let $v^T$ be the solution of the approximate problem \eqref{aNS} on $\Omega_T$, which is obtained in Proposition \ref{appro-existence} and satisfies \eqref{2-10-1}. Hence, one has
\[
\int_{-\infty}^{+\infty} f^{-\frac53}(\tau) \, d\tau= R+ L < \infty.
\]

With the help of this uniform estimate and Lemma \ref{lemma1}, there exists at least one solution of \eqref{NS1}, which satisfies the estimate
\begin{equation}\label{estimate-case2}
\| \nabla \Bu\|_{L^2(\Omega)}^2  \leq C.
\end{equation}
Hence we finish the proof for Part (i) of Theorem \ref{unbounded channel} in the case that the range of $k(t)$ is $(-L, R)$.

\vspace{3mm}
{\bf Case 3. The range of $k(t)$ is $(-L, \, \infty)$ or $(-\infty, \, R)$, $0<L, R< \infty$.}  Without loss of generality, we assume that
the range of $k(t)$ is $(-L, \infty)$. In this case, $h(t)$ is defined on $(-L,\infty)$. By \eqref{4-2}, one has $h_R(t)=h(t)-\beta^*f(h(t))>0$ for suitably large $t$. Then we introduce the new truncating function as follows,
\begin{equation} \label{cut-off-hat-new}
\hat{\zeta}^L_T ( x,t)=\left\{
\begin{aligned}
		&\beta^*,~~~~~~~~~~~~~~ &&\text{ if }x_1\in (-T,\, h_R(t)),\\
	&\frac{h(t)-x_1}{f(h(t))},~~~~~~&& \text{ if } x_1\in [h_R(t),\, h(t)],\\
	&0,~~~~~~~~~~~~~~ &&\text{ if } x_1\in (h(t),\,\infty),
\end{aligned}\right.
\end{equation}
where $h_R(t)$ is defined in \eqref{defh}.  Denote
\begin{equation}\label{defhatt}
	\hat{t}=\sup\{t>0:\ h_R(t) \leq 0 \}.
\end{equation}

\begin{lemma}\label{lemma-case3}  Assume that the domain $\Omega$ satisfies \eqref{assumpf''},  and
\begin{equation*}
\int_{-\infty}^0 f^{-\frac53}(\tau) \, d\tau = L< \infty, \ \ \ \ \ \int_0^{+\infty} f^{-\frac53}(\tau) \, d\tau = \infty.
\end{equation*}
Let $\Bv^T$ be the solution of the approximate problem \eqref{aNS} in $\Omega_{-T, h(T)}$, which is obtained in Proposition \ref{appro-existence} and satisfies the energy estimate \eqref{2-10-1}. Then there exists a positive constant $C_{22}$ independent of $t$ and $T$ such that for any $\hat{t} \leq t\leq T$, one has
\begin{equation}\label{case3-1}
	\|\nabla \Bv^T\|_{L^2(\Omega_{-T,\,  h_R(t)} )}^2\leq C_{22}\left(1+\int_0^{h(t)} f^{-3}(\tau)\,d\tau\right),
\end{equation}
 where $h_R(t)$ is defined in \eqref{defh} and $C_{22}$ is independent of $T$.
\end{lemma}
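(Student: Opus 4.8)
The argument follows the pattern of Lemma~\ref{lemma4} (I drop the superscript $T$ and write $\Bv$ for $\Bv^T$), the essential new feature being that the left outlet is \emph{not} truncated. Since $\int_{-\infty}^0 f^{-\frac53}(\tau)\,d\tau<\infty$ forces $f(x_1)\to\infty$ as $x_1\to-\infty$, one has $f^{-3}\le f^{-\frac53}$ for $|x_1|$ large and hence $\int_{-\infty}^0 f^{-3}(\tau)\,d\tau<\infty$; thus the Dirichlet energy coming from the whole left half is a bounded additive constant and need not be tracked through a differential inequality. This is why $\hat{\zeta}^L_T$ in \eqref{cut-off-hat-new} may be taken equal to the constant $\beta^*$ all the way down to $x_1=-T$: because $\Bv$ vanishes on the left face $\{x_1=-T\}$ of the approximate domain $\Omega_{-T,h(T)}$, testing the weak formulation produces no boundary contribution there, and all boundary correction terms live only in $\hat{E}^+=\{x_1\in(h_R(t),h(t))\}$, mirroring the $\hat{E}^+$-part of Lemma~\ref{lemma4}.

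First I would take $\Bp=\hat{\zeta}^L_T\Bv$ in the pressure identity \eqref{2-11} on $\Omega_{-T,h(T)}$ (legitimate since $\hat{\zeta}^L_T\Bv\in H_0^1(\Omega_{-T,h(T)})$) and integrate by parts. Using $|\partial_{x_1}\hat{\zeta}^L_T|=[f(h(t))]^{-1}$ on $\hat{E}^+$ and $\partial_{x_1}\hat{\zeta}^L_T=0$ elsewhere, this yields, in analogy with \eqref{4-10},
\[
\int_{\Omega_{-T,h(t)}}\hat{\zeta}^L_T|\nabla\Bv|^2\,dx=\int_{\Omega_{-T,h(t)}}\hat{\zeta}^L_T\big(\Bv\cdot\nabla\Bv\cdot\Bg-\nabla\Bg:\nabla\Bv+\Bg\cdot\nabla\Bv\cdot\Bg\big)\,dx+R^+,
\]
where $R^+$ collects the $\hat{E}^+$-terms, including the pressure term $\int_{\hat{E}^+}p\,v_1\,\partial_{x_1}\hat{\zeta}^L_T\,dx$. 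The interior integrals are treated as before: the triple product is absorbed into the left side after choosing $\varepsilon$ small in Lemma~\ref{lemma1}, and Young's inequality on the remaining two leaves $\frac14\int\hat{\zeta}^L_T|\nabla\Bv|^2\,dx+C\int_{\Omega_{-T,h(t)}}|\nabla\Bg|^2+|\Bg|^4\,dx$; by Lemma~\ref{lemma1} and the convergence of $\int_{-\infty}^0 f^{-3}$, this last integral is $\le C\int_{-T}^{h(t)}f^{-3}(\tau)\,d\tau\le C\big(1+\int_0^{h(t)}f^{-3}(\tau)\,d\tau\big)$. For $R^+$ I would recycle the estimates \eqref{4-11}--\eqref{4-16} essentially verbatim: by Lemmas~\ref{lemmaA1}--\ref{lemmaA2} one has $M_1(\hat{E}^+)$ of order $f(h(t))$ and $M_4(\hat{E}^+)$ of order $[f(h(t))]^{\frac12}$, while by \eqref{4-17} the cross-sections vary only by a fixed factor across $\hat{E}^+$, so the Bogovskii constant $M_5(\hat{E}^+)$ coming from $\operatorname{div}\Ba=v_1$ in $\hat{E}^+$ (with $v_1$ of zero mean there, as $\Bv$ has zero flux) is uniform in $t$ and $T$. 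This gives
\[
|R^+|\le C\|\nabla\Bv\|_{L^2(\hat{E}^+)}^2+C\,f(h(t))\,\|\nabla\Bv\|_{L^2(\hat{E}^+)}^3+C\int_{\hat{E}^+}|\nabla\Bg|^2+|\Bg|^4\,dx.
\]

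Set $\hat{y}^L(t)=\int_{\Omega_{-T,h(T)}}\hat{\zeta}^L_T(\cdot,t)|\nabla\Bv|^2\,dx$. The lower bound $\partial_t\hat{\zeta}^L_T\ge\frac12[f(h(t))]^{\frac23}$ on $\hat{E}^+$ (the analogue of \eqref{4-4}) gives $(\hat{y}^L)'(t)\ge\frac12[f(h(t))]^{\frac23}\int_{\hat{E}^+}|\nabla\Bv|^2\,dx$, hence $\int_{\hat{E}^+}|\nabla\Bv|^2\,dx\le 2\underline{d}^{-\frac23}(\hat{y}^L)'(t)$ and $f(h(t))\|\nabla\Bv\|_{L^2(\hat{E}^+)}^3\le C[(\hat{y}^L)'(t)]^{\frac32}$. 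Collecting everything, one reaches
\[
\hat{y}^L(t)\le C_{11}\big\{(\hat{y}^L)'(t)+[(\hat{y}^L)'(t)]^{\frac32}\big\}+C_{12}\Big(1+\int_0^{h(t)}f^{-3}(\tau)\,d\tau\Big),\qquad \hat{t}\le t\le T.
\]
I would then invoke Lemma~\ref{lemmaA4}(1) with $\delta_1=\frac12$, $\Psi(\tau)=C_{11}(\tau+\tau^{\frac32})$, and $\varphi(t)=C_{13}+C_{14}\int_0^{h(t)}f^{-3}(\tau)\,d\tau$. One checks $C_{12}\big(1+\int_0^{h(t)}f^{-3}(\tau)\,d\tau\big)\le\frac12\varphi(t)$ for $C_{13},C_{14}$ large; that $\varphi'(t)=C_{14}h'(t)[f(h(t))]^{-3}=C_{14}[f(h(t))]^{-\frac43}\le C_{14}\underline{d}^{-\frac43}$ is bounded, so $\varphi(t)\ge 2\Psi(\varphi'(t))$ once $C_{13}$ is large; and, using $\hat{\zeta}^L_T\le\beta^*$ together with \eqref{2-10-1} and Lemma~\ref{lemma1}, that $\hat{y}^L(T)\le\beta^*\|\nabla\Bv\|_{L^2(\Omega_{-T,h(T)})}^2\le C\big(1+\int_0^{h(T)}f^{-3}(\tau)\,d\tau\big)\le\varphi(T)$. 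Lemma~\ref{lemmaA4}(1) then yields $\hat{y}^L(t)\le\varphi(t)$ for $\hat{t}\le t\le T$, and restricting to the plateau $x_1\in(-T,h_R(t))$, where $\hat{\zeta}^L_T=\beta^*$, gives $\beta^*\|\nabla\Bv\|_{L^2(\Omega_{-T,h_R(t)})}^2\le\varphi(t)$, i.e.\ \eqref{case3-1} with $C_{22}=(\beta^*)^{-1}\max\{C_{13},C_{14}\}$. As in Lemma~\ref{lemma4}, the one genuinely delicate point is keeping every constant---above all the Bogovskii constant $M_5(\hat{E}^+)$ entering the pressure estimate---uniform in both $t$ and $T$, which rests entirely on \eqref{4-17}; the conceptual ingredient, by contrast, is immediate once noticed: the feeble left outlet contributes only the finite quantity $\int_{-\infty}^0 f^{-3}$, so it may be left uncut, and this is exactly what makes the one-sided estimate \eqref{case3-1} possible.
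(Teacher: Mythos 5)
Your proposal is correct and follows essentially the same route as the paper, which itself proves this lemma by repeating the argument of Lemma \ref{lemma4} with the test function $\hat{\zeta}^L_T\Bv$ in \eqref{2-11}, arriving at the differential inequality $\hat{y}^L\le C\{(\hat{y}^L)'+[(\hat{y}^L)']^{3/2}\}+C(1+\int_0^{h(t)}f^{-3}(\tau)\,d\tau)$ and then invoking Lemma \ref{lemmaA4} exactly as you do; your filling in of the $\hat{E}^+$ boundary estimates, the uniform Bogovskii constant, and the comparison function $\varphi$ matches the paper's intent. (Minor remark: the finiteness of $\int_{-\infty}^0 f^{-3}$ follows even more directly from $f\ge\underline{d}$, since $f^{-3}\le\underline{d}^{-4/3}f^{-5/3}$, without arguing that $f\to\infty$.)
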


\begin{proof}
The superscript $T$ is omitted throughout the proof.
We follow the proof of Lemma \ref{lemma4} by taking the test function $\Bp=\hat{\zeta}^L_T \Bv$ in \eqref{2-11}. Similarly, one has
\begin{equation}\label{case3-4}
\begin{aligned}
	\hat{y}^L(t) & \leq C \left\{ (\hat{y}^L)^{\prime} + \left[ (\hat{y}^L)^\prime    \right]^{\frac32}   \right\} + C \int_{-T}^{h(t)}
	f^{-3}(\tau) \, d\tau \\
	& \leq C \left\{ (\hat{y}^L)^{\prime} + \left[ (\hat{y}^L)^\prime    \right]^{\frac32}   \right\} + C \left(1+ \int_{0}^{h(t)}
	f^{-3}(\tau) \, d\tau \right) ,
\end{aligned}
\end{equation}
where
\begin{equation*}
\hat{y}^L (t) = \int_{\Omega_{-T, h(T)}} \hat{\zeta}^L_T |\nabla \Bv|^2 \, dx .
\end{equation*}
Hence, the same argument as in the proof of Lemma \ref{lemma4} yields
\be \label{case3-6}
\hat{y}^L(t) \leq C \left( 1  + \int_0^{h(t)} f^{-3} (\tau) \, d\tau \right).
\ee
This completes the proof of the lemma.
\end{proof}

\begin{pro}\label{prop-case3}
Assume that the domain $\Omega$ satisfies the assumptions of Lemma \ref{lemma-case3},  the problem \eqref{NS}-\eqref{BC} and \eqref{flux constraint} has a solution $\Bu=\Bv+\Bg\in H_\sigma(\Omega)$ satisfying
\begin{equation}
	\|\nabla \Bu\|_{L^2(\Omega_{0, t} )}^2\leq C_{23}\left(1+\int_0^t f^{-3}(\tau)\,d\tau\right)
\end{equation}
and
\be
\|\nabla \Bu\|_{L^2(\Omega_{-t , 0} )}^2 \leq C_{23},
\ee
where the constant $C_{23}$ depends only on  $\Phi$ and $\Omega$.
\end{pro}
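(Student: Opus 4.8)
The proof follows the same architecture as Propositions \ref{unbounded exits-1} and \ref{unbounded exits-2}: realize the solution as a weak $H^1_{loc}$-limit of the approximate solutions $\Bv^T$ on $\Omega_{-T,h(T)}$ produced by Proposition \ref{appro-existence}, and then read off the two one-sided Dirichlet estimates from the truncated-domain bound of Lemma \ref{lemma-case3} together with one supplementary estimate for the \emph{left} outlet. First I would note that, by Lemma \ref{lemma-case3}, for every fixed bounded subdomain $\Omega_{a,b}$ the quantities $\|\nabla\Bv^T\|_{L^2(\Omega_{a,b})}$ are bounded uniformly in $T$ (once $T$ is large enough that $[a,b]\subset(-T,h_R(T))$), hence by Lemma \ref{lemmaA1} so are the full $H^1(\Omega_{a,b})$-norms. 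Extending each $\Bv^T$ by zero to $\Omega$ and using a diagonal argument, a subsequence converges weakly in $H^1_{loc}(\Omega)$ and, by the compact embedding $H^1(\Omega_{a,b})\hookrightarrow L^4(\Omega_{a,b})$, strongly in $L^4_{loc}(\Omega)$, to some $\Bv\in H_\sigma(\Omega)$ satisfying $\int_{\Sigma(x_1)}v_1\,ds=0$. The strong $L^4_{loc}$-convergence passes to the limit in the nonlinear terms, so $\Bv$ is a weak solution of \eqref{NS1}, and $\Bu:=\Bv+\Bg$ solves \eqref{NS}--\eqref{BC} and \eqref{flux constraint}.

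\textbf{Right outlet.} Passing to the limit in \eqref{case3-1} (using weak lower semicontinuity of $\|\nabla\cdot\|_{L^2}$ and the fact that $\Bv^T$ vanishes for $x_1<-T$), one gets for every $t\ge \hat t$ that $\|\nabla\Bv\|_{L^2(\Omega_{-\infty,h_R(t)})}^2\le C_{22}\bigl(1+\int_0^{h(t)}f^{-3}(\tau)\,d\tau\bigr)$. By \eqref{4-2} together with $\|f'\|_{L^\infty}=(2\beta^*)^{-1}$, the map $h_R$ is strictly increasing with $h_R(\hat t)=0$ and $h_R(t)\to\infty$, so given $s>0$ there is a unique $t\ge\hat t$ with $h_R(t)=s$. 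On the interval $[s,h(t)]=[h(t)-\beta^*f(h(t)),h(t)]$ estimate \eqref{4-17} gives $f\ge \tfrac12 f(h(t))$, hence $\int_s^{h(t)}f^{-3}\le 8\beta^* f^{-2}(h(t))\le 8\beta^*\underline d^{-2}$; therefore $\int_0^{h(t)}f^{-3}\le \int_0^s f^{-3}+C$, and since $\Omega_{0,s}\subset\Omega_{-\infty,h_R(t)}$ we conclude $\|\nabla\Bv\|_{L^2(\Omega_{0,s})}^2\le C\bigl(1+\int_0^s f^{-3}\bigr)$. Adding the bound $\|\nabla\Bg\|_{L^2(\Omega_{0,s})}^2\le C\int_0^s f^{-3}$ from Lemma \ref{lemma1} yields the first asserted estimate.

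\textbf{Left outlet.} Here the key structural fact is that $\int_{-\infty}^0 f^{-5/3}(\tau)\,d\tau=L<\infty$ and $f\ge\underline d$ force $\int_{-\infty}^0 f^{-3}(\tau)\,d\tau\le \underline d^{-4/3}L<\infty$. I would test \eqref{2-11} with $\Bp=\hat\zeta^-\Bv^T$, where $\hat\zeta^-$ depends only on $x_1$, equals $\beta^*$ for $x_1\le 0$, interpolates linearly to $0$ on $[0,1]$, and vanishes for $x_1\ge 1$, and run the computation of Lemmas \ref{lemma3}--\ref{lemma4} with this fixed cut-off. The term $\int\hat\zeta^-\Bv\cdot\nabla\Bv\cdot\Bg$ is absorbed as in those proofs: since $\hat\zeta^-=\hat\zeta^-(x_1)$, the slicewise Hardy inequality underlying Lemma \ref{lemma1} gives $\int\hat\zeta^-|\Bv|^2|\Bg|^2\le C\varepsilon^2\Phi^2\int\hat\zeta^-|\partial_{x_2}\Bv|^2$, so for $\varepsilon$ small $|\int\hat\zeta^-\Bv\cdot\nabla\Bv\cdot\Bg|\le\tfrac12\int\hat\zeta^-|\nabla\Bv|^2$; the $\nabla\Bg$ and $\Bg^2$ terms contribute $\le\tfrac14\int\hat\zeta^-|\nabla\Bv|^2+C\int_{-\infty}^1(|\nabla\Bg|^2+|\Bg|^4)$, which is finite by Lemma \ref{lemma1} and the preceding observation; and \emph{every} boundary term arising from $\partial_{x_1}\hat\zeta^-$ (including the pressure term, handled via the Bogovskii map on the fixed slab $\Omega_{0,1}$, where $M_5$ is a fixed constant) is supported in $\Omega_{0,1}$, hence dominated by the $T$-uniform bound $\|\nabla\Bv^T\|_{L^2(\Omega_{-1,1})}\le C$ from Step 1 and fixed powers of it. Absorbing gives $\int\hat\zeta^-|\nabla\Bv^T|^2\le C$ uniformly in $T$, so $\|\nabla\Bv^T\|_{L^2(\Omega_{-T,0})}^2\le(\beta^*)^{-1}C$; passing to the limit and adding $\Bg$ yields $\|\nabla\Bu\|_{L^2(\Omega_{-t,0})}^2\le C_{23}$ for all $t$.

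\textbf{Main obstacle.} The one genuinely delicate point is Step 3: absorbing $\int\hat\zeta^-\Bv\cdot\nabla\Bv\cdot\Bg$ with a non-constant weight, and — more importantly — verifying that all cut-off contributions, above all the pressure term, localize to the fixed slab $\Omega_{0,1}$, so that no uncontrolled quantity leaks in from $\{x_1<0\}$ where the Dirichlet integral is a priori known to be finite only for each fixed $T$ and not yet uniformly. Once one observes that $\hat\zeta^-$ is bounded and depends only on $x_1$, these are routine, but the bookkeeping must keep every constant independent of $T$. (Alternatively, Step 3 can be carried out with a moving left cut-off and Lemma \ref{lemmaA4}(2), exploiting that $\int_{\,\cdot}^0 f^{-3}$ stays bounded; the fixed cut-off above seems cleaner.)
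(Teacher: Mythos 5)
Your proposal is correct, and its existence part and right-outlet estimate follow essentially the same route as the paper: pass to a weak $H^1_{loc}$ limit of the $\Bv^T$ as in Proposition \ref{straight-existence}, then convert $\int_0^{h(t)}f^{-3}$ into $\int_0^{h_R(t)}f^{-3}$ plus a bounded remainder exactly as in \eqref{4-25}, and add the flux-carrier contribution via Lemma \ref{lemma1}. The only place where you genuinely diverge is the left outlet, and there your extra work is unnecessary: since $h_R$ is continuous and increasing with $h_R(\hat t)=0$, taking $t=\hat t$ in \eqref{case3-1} already gives the $T$-uniform bound $\|\nabla\Bv^T\|_{L^2(\Omega_{-T,0})}^2\leq C_{22}\bigl(1+\int_0^{h(\hat t)}f^{-3}(\tau)\,d\tau\bigr)\leq C$, which is precisely how the paper concludes (its estimate \eqref{case3-12}); indeed your own limit statement in the right-outlet paragraph, $\|\nabla\Bv\|_{L^2(\Omega_{-\infty,h_R(t)})}^2\leq C_{22}\bigl(1+\int_0^{h(t)}f^{-3}\bigr)$, yields the left bound at $t=\hat t$ without further argument. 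Your Step 3 — testing with a fixed cut-off $\hat\zeta^-$ supported in $\{x_1<1\}$, absorbing $\int\hat\zeta^-\Bv\cdot\nabla\Bv\cdot\Bg$ by the slicewise Hardy inequality, handling the pressure on the fixed slab $\Omega_{0,1}$ with the Bogovskii map, and using $\int_{-\infty}^0 f^{-3}\leq \underline{d}^{-4/3}\int_{-\infty}^0 f^{-5/3}<\infty$ — is sound and would be the right tool if the available estimates did not already reach down to $x_1=-T$, but here it only re-proves what Lemma \ref{lemma-case3} at $t=\hat t$ gives for free.
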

\begin{proof}
With the help of Lemma \ref{lemma-case3}, one can find at least one solution $\Bu = \Bv + \Bg$ of \eqref{NS1} in a way similar to Proposition \ref{straight-existence}. Following the same argument in the estimate \eqref{4-25} yields
\be \label{case3-10}
\begin{aligned}
	 \|\nabla \Bv\|_{L^2 (\Omega_{0, h_R(t)}) }^2  
	\leq &
	C \left(1 +  \int_0^{h(t)} f^{-3}(\tau) \, d\tau \right)\\
	\leq & C \left(1 +  \int_0^{h_R(t)} f^{-3}(\tau) \, d\tau \right) + C \int_{h_R(t)}^{ h(t)} f^{-3}(\tau) \, d\tau \\
	\leq & C_{23} \left(1+  \int_0^{h_R(t)} f^{-3}(\tau) \, d\tau \right),
\end{aligned}
\ee
where $h_R(t)$ is defined in \eqref{defh}. Hence one has
\be \label{case3-11}
\|\nabla \Bv\|_{L^2(\Omega_{0,h_R(t)} )}^2  \leq C_{23} \left(1 +
\int_0^{h_R(t)} f^{-3}(\tau) \, d\tau \right).
\ee
On the other hand, according to Lemma \ref{lemma-case3}, it holds that
\be \label{case3-12}
\|\nabla \Bv\|_{L^2(\Omega_{-T, 0} )}^2 \leq C\left(1 + \int_0^{h({\hat{t}}) } f^{-3}(\tau) \, d\tau\right) \leq C_{23}.
\ee
Combining the estimates \eqref{case3-11}-\eqref{case3-12} and Lemma \ref{lemma1} finishes the proof of the proposition.
\end{proof}

Hence we finish the proof for Part (i) of Theorem \ref{unbounded channel} in the case that the range of $k(t)$ is $(-L, +\infty)$. The same proof applies to the case that the range of $k(t)$ is $(-\infty, R)$. The proof of existence for flows in channels with unbounded outlets is completed.

We are ready to prove the uniqueness of solutions when the flux $\Phi$ is small.
In fact, one can derive some refined estimate for the local Dirichlet norm of $\Bu$, which plays an important role in proving the uniqueness when $\Phi$ is small.


\begin{pro}\label{decay rate-right}
Let $\Bu=\Bv+\Bg$ be the solution obtained in Part (i) of Theorem \ref{unbounded channel}. Assume further that either
\begin{equation}\label{4-27-2}
	\left|\int_0^{\infty} f^{-3}(\tau)\,d\tau\right|=\infty,\ \ \ \lim_{t\to \infty}f'(t)= 0,
\end{equation}
or
\begin{equation}\label{4-27-3}
	\left|\int_0^{\infty}f^{-3}(\tau)\,d\tau\right|<\infty,\ \ \ \lim_{t\to \infty}\frac{\sup_{\tau \ge t} f'(\tau)}{\left|\int_t^{ \infty} f^{-3}(\tau)\,d\tau\right|^\frac12}= 0.
\end{equation}
 Then there exists a constant $C_{31}$ depending only on $ \Phi$, and $\Omega$ such that for any $t\geq 0$, one has
\begin{equation*}
\|\nabla \Bu\|_{L^2(\Omega_{t-\beta^* f(t),t})}^2\leq \frac{C_{31}}{  f^2(t)}.
\end{equation*}

\end{pro}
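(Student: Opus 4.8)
I would prove the estimate for $\Bv=\Bu-\Bg$, since the flux-carrier part is already of the right size: by Lemma \ref{lemma1},
\[
\int_{\Omega_{t-\beta^* f(t),t}}|\nabla\Bg|^2+|\Bg|^4\,dx\le C\int_{t-\beta^* f(t)}^{t}f^{-3}(\tau)\,d\tau\le C\beta^* f(t)\,f^{-3}(t)=Cf^{-2}(t).
\]
For $t$ in a fixed compact set the inequality is trivial, as $\|\nabla\Bv\|_{L^2(\Omega_{t-\beta^* f(t),t})}^2$ is bounded there while $f^{-2}(t)$ is bounded below; so I may take $t$ large and, writing $t=h(s)$ with $s=k(t)$, reduce to proving $\|\nabla\Bv\|_{L^2(\hat E^+(s))}^2\le Cf^{-2}(h(s))$, where $\hat E^+(s)=\Omega_{h_R(s),h(s)}$ as in \eqref{4-7}.

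The heart of the argument is a localized energy estimate on $\hat E^+(s)$. I would take a cutoff $\chi(\cdot\,;s)$ equal to $1$ on $\hat E^+(s)$ and tapering linearly to $0$ on two short intervals $J_s^\pm$ flanking $\hat E^+(s)$ on its two sides, each of $x_1$-width a small fixed fraction of $\beta^* f(h(s))$, so that by \eqref{4-17} (applied finitely often) one has $f\asymp f(h(s))$ on $\supp\chi$ and $|\chi'|\le Cf^{-1}(h(s))$ there. Since $\chi\Bv$ has compact $x_1$-support but is no longer divergence-free, I would test the pressure formulation \eqref{3-14-1} (equivalently the $\Bv$-version of \eqref{2-11} on $\Omega$) with $\Bp=\chi\Bv$ and run, line for line, the computations in the proofs of Lemmas \ref{lemma3} and \ref{lemma4}: absorb $\int\chi\,\Bv\cdot\nabla\Bv\cdot\Bg$ on the left via Lemma \ref{lemma1} for $\varepsilon$ small; eliminate the pressure with a Bogovskii map on $J_s^\pm$, whose norm $M_5(J_s^\pm)$ is uniform in $s$ because $f'$ is bounded; estimate the terms supported on $J_s^\pm$ with Lemmas \ref{lemmaA1}--\ref{lemmaA2} using $M_1(J_s^\pm)\asymp f(h(s))$ and $M_4(J_s^\pm)\asymp f^{1/2}(h(s))$; and control the $\Bg$-terms by $C\int_{\supp\chi}|\nabla\Bg|^2+|\Bg|^4\,dx\le Cf^{-2}(h(s))$. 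The outcome is a Caccioppoli/Saint-Venant inequality
\[
\|\nabla\Bv\|_{L^2(\hat E^+(s))}^2\le C\|\nabla\Bv\|_{L^2(J_s)}^2+Cf(h(s))\,\|\nabla\Bv\|_{L^2(J_s)}^3+Cf^{-2}(h(s)),\qquad J_s:=J_s^-\cup J_s^+ .
\]

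To convert this into the pointwise bound I would reintroduce the monotone quantity $\hat y^+(s)=\int_\Omega\hat\zeta^+(x,s)|\nabla\Bv|^2\,dx$ from the proof of Proposition \ref{unbounded exits-2}. By \eqref{4-3}--\eqref{4-4}, $(\hat y^+)'(s)\asymp f^{2/3}(h(s))\,\|\nabla\Bv\|_{L^2(\hat E^+(s))}^2$, while Propositions \ref{unbounded exits-2} and \ref{prop-case3} (and, in the remaining case, the global bound \eqref{estimate-case2}) give $\hat y^+(s)\le C\big(1+\int_0^{h(s)}f^{-3}(\tau)\,d\tau\big)$, and $\frac{d}{ds}\int_0^{h(s)}f^{-3}(\tau)\,d\tau=f^{-4/3}(h(s))$. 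Under the change of variables $x_1=h(\sigma)$ the windows $\hat E^+(s)$ and $J_s^\pm$ become adjacent $\sigma$-intervals of length $\asymp\beta^* f^{-2/3}(h(s))$, and the displayed inequality turns into a differential inequality in $s$ for $(\hat y^+)'$ whose source sits precisely at the order $f^{-4/3}(h(s))=\frac{d}{ds}\int_0^{h(s)}f^{-3}$ that $\hat y^+$ tolerates. I would then apply Lemma \ref{lemmaA4}: the hypotheses \eqref{4-27-2}/\eqref{4-27-3} are exactly the tail conditions required by its part~(2) (the case $\int_0^\infty f^{-3}=\infty$ using $f'\to0$, the case $\int_0^\infty f^{-3}<\infty$ using the decay of $\sup_{\tau\ge t}f'$ against $(\int_t^\infty f^{-3})^{1/2}$), while the cubic term is absorbed via $\hat y^+\le C(1+\int_0^{h(s)}f^{-3})$. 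This yields $(\hat y^+)'(s)\le Cf^{-4/3}(h(s))$, hence $\|\nabla\Bv\|_{L^2(\hat E^+(s))}^2\le Cf^{-2}(h(s))$; adding back $\Bg$ completes the proof.

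I expect the third step to be the real difficulty. The energy identity produces only a Saint-Venant-type inequality coupling $\|\nabla\Bv\|_{L^2}^2$ on $\hat E^+(s)$ to its values on neighbouring windows of the same, $s$-dependent, scale $\beta^* f$, whereas all the differential inequalities used earlier in the paper bound the relevant energy from above and its derivative only from below. Recovering genuinely pointwise (rather than merely integrated) control therefore rests on a careful matching of the length scales under $x_1=h(\sigma)$, so that both the source and the self-improvement terms land at the orders compatible with the growth rate of $\hat y^+$; on a correct accounting of the cubic nonlinearity through the growth estimate already in hand; and on the verification that \eqref{4-27-2}/\eqref{4-27-3} indeed supply the hypotheses of Lemma \ref{lemmaA4}(2). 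Disposing of the possibly unbounded set of $s$ on which $f(h(s))$ stays bounded -- where the channel effectively has bounded width and one falls back on a uniform local estimate in the spirit of Proposition \ref{uniform estimate} -- is an additional but routine point.
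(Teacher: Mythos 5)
Your first two steps (reducing to $\Bv$ and deriving a localized energy inequality by testing with a cutoff times $\Bv$, removing the pressure with a Bogovskii map on the flanking strips, and absorbing $\int \chi\,\Bv\cdot\nabla\Bv\cdot\Bg$ via Lemma \ref{lemma1}) are sound and parallel the paper. The gap is in your third step, and it is not merely a technical matching of scales: the mechanism you propose cannot produce the conclusion. A Caccioppoli--Saint-Venant inequality of the form
\begin{equation*}
\|\nabla\Bv\|_{L^2(\hat E^+(s))}^2\le C\|\nabla\Bv\|_{L^2(J_s)}^2+Cf(h(s))\|\nabla\Bv\|_{L^2(J_s)}^3+Cf^{-2}(h(s))
\end{equation*}
bounds one window by neighbouring windows of the same size with a constant that is not small, so it has no self-improving content by itself. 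You then claim to upgrade it to $(\hat y^+)'(s)\le Cf^{-4/3}(h(s))$ by invoking Lemma \ref{lemmaA4}(2), but that lemma (in all its parts) only ever yields upper bounds on $z$ itself, never on $z'$; and the bound on $\hat y^+$ you would feed into it, namely $\hat y^+(s)\le C\bigl(1+\int_0^{h(s)}f^{-3}\bigr)$, is exactly what Proposition \ref{unbounded exits-2} already gives and does not control increments of $\hat y^+$ pointwise (the derivative could spike on short intervals). Moreover, the hypotheses \eqref{4-27-2}/\eqref{4-27-3} are not ``the tail conditions required by Lemma \ref{lemmaA4}(2)'': those conditions are of the form $\liminf z/\varphi<1$ or $z/\tilde z\to0$, and have nothing to do with $f'$.

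What is missing is the paper's actual device: a two-sided \emph{expanding} family of cutoffs anchored at the target location $T$. One sets $\hat T=T-\beta^*f(T)$, $T_1=\hat T-\beta^*f(\hat T)$, $T=T_2-\beta^*f(T_2)$, lets the left and right edges move according to the ODEs \eqref{4-26} (speeds $f^{5/3}$ at the moving edges), and runs the comparison of Lemma \ref{lemmaA4}(1) \emph{backwards}: the terminal value $\tilde y^+(t_1)$, reached when the left edge hits a fixed large $T_0$, is controlled by the growth estimates of Propositions \ref{unbounded exits-2} and \ref{prop-case3}, and the comparison function is $\varphi(t)=C\int_{T_1-m_1(t)}^{T_2+m_2(t)}f^{-3}\,d\tau+C\,f^{-2}(T)$, whose additive term $f^{-2}(T)$ is what ultimately produces the decay rate when one reads the inequality at $t=0$ on the window $\Omega_{\hat T,T}$. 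Verifying $\varphi\ge 2\Psi(t,\varphi')$ is precisely where \eqref{4-27-2}/\eqref{4-27-3} enter, through the identity $f^{-2}(T_2+m_2)+f^{-2}(T_1-m_1)=2f^{-2}(T)+2\int_{T_1-m_1}^{T}(f'f^{-3})-2\int_T^{T_2+m_2}(f'f^{-3})$ and the smallness of $\sup_{t\ge T_0}|f'|$ either absolutely or relative to $\bigl(\int_{T_0}^\infty f^{-3}\bigr)^{1/2}$. Without this anchored expanding-window comparison (or an equivalent substitute), your local inequality plus the known growth bound does not yield $\|\nabla\Bu\|_{L^2(\Omega_{t-\beta^*f(t),t})}^2\le C/f^2(t)$.
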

\begin{proof} We divide the proof into three steps.

{\em Step 1. Truncating function.}
Clearly,
\begin{equation*}
	\frac{d}{dt}(t\pm \beta^* f(t))=1\pm \beta^* f'(t) \ge \frac12.
\end{equation*}
Hence the function $t\pm \beta^* f(t)$ are strictly monotone increasing functions on $\mathbb{R}$.
 For any fixed $T>0$, one can uniquely define the numbers $\hat{T},T_1$, and $T_2$ by
\begin{equation*}
\hat{T}= T - \beta^* f(T), \ \  T_1= \hat{T} - \beta^* f(\hat{T}), \ \ \mbox{and}\ \ T= T_2 - \beta^* f(T_2).
\end{equation*}

 Let $T_0\ge 1$ be a positive constant to be determined.
We introduce two monotone increasing functions $m_i(t)(i=1,2)$ such that for any  $t\in [0,t_1]$,
\begin{equation}\label{4-26}
\left\{\begin{aligned}
&	\frac{d}{dt}m_1(t)=f^{\frac53}(T_1-m_1(t)),\\
&	\frac{d}{dt}m_2(t)=f^{\frac53}(T_2+m_2(t)),\\
&	m_i(0)=0, i=1, 2,
\end{aligned}\right.
\end{equation}
where $t_1$ is the number satisfying
\begin{equation}\label{4-21}
m_1(t_1)=T_1-T_0.
\end{equation}
Noting that  $\frac{d}{dt}m_1(t)\ge \underline{d}^\frac{5}{3}>0$, the number $t_1$ is well-defined. Then  we define the new truncating function $\tilde{\zeta}^+$ as follows,
\begin{equation*}
\tilde{\zeta}^+( x,t)=\left\{
\begin{aligned}
	&\frac{x_1-T_1+m_1(t)}{f(T_1-m_1(t))},~~&&\text{ if }x_1\in [T_1-m_1(t),T_1^*(t)],\\
	&\beta^*,~~~~~~~~~ &&\text{ if }x_1\in (T_1^*(t),
		T_2^*(t)),\\
	&\frac{T_2+m_2(t)-x_1}{f(T_2+m_2(t))},~~&&\text{ if }x_1\in [T_2^*(t),T_2+m_2(t)],\\
	&0,~~~~~~~~ &&\text{ if }x_1\in (-\infty,T_1-m_1(t) )\cup (T_2+m_2(t),\infty),
\end{aligned}\right.
\end{equation*}
where
\[
T_1^*(t)=T_1-m_1(t)+\beta^* f(T_1-m_1(t))\quad \text{and}\quad T_2^*(t)=T_2+m_2(t)-\beta^* f(T_2+m_2(t)).
\]
With the help of \eqref{4-26},  similar to \eqref{4-3}-\eqref{4-4}, one has
\begin{equation*}
|\nabla \tilde{\zeta}^+|=|\partial_{x_1}\tilde{\zeta}^+|=\frac{1}{f(T_i\pm m_i(t))}
\quad\text{and}\quad
|\partial_t\tilde{\zeta}^+|\geq \frac12 [f(T_i\pm m_i(t))]^\frac23  \,\,\text{in}\,\, \operatorname{supp} \nabla \tilde{\zeta}^+=\operatorname{supp} \partial_t \tilde{\zeta}^+.
\end{equation*}

{\em Step 2. Energy estimate. } Taking the test function $\Bp=\tilde{\zeta}^+ \Bv$ in \eqref{2-11} and following the proof of Lemma \ref{lemma4} yield that for any $t\in [0,t_1]$,
\begin{equation}\label{eq130.5}
	\begin{aligned}
\tilde{y}^+\leq & C_{24}\left\{[f^{-\frac23}(T_2+m_2(t))+f^{-\frac23}(T_1-m_1(t))](\tilde{y}^+)'+[(\tilde{y}^+)']^\frac32\right\}\\
 &+C_{25}\int_{T_1-m_1(t)}^{T_2+m_2(t)} f^{-3}(\tau) \,d\tau,
\end{aligned}
\end{equation}
 where
\begin{equation*}
\tilde{y}^+(t)=\int_{\Omega}\tilde{\zeta}^+|\nabla \Bv |^2\,dx.
\end{equation*}
By virtue of Propositions \ref{unbounded exits-2} and \ref{prop-case3}, one has
\begin{equation}\label{4-22-1}
	\begin{aligned}
		\tilde{y}^+(t_1)\leq &C \left(1+\int_{0}^{T_2+m_2(t_1)}f^{-3}(\tau) \,d\tau\right)\\
		\leq& C\int_{T_0}^{T_2+m_2(t_1)}f^{-3}(\tau) \,d\tau+C \left(1+\int_{0}^{T_0}f^{-3}(\tau) \,d\tau\right).
	\end{aligned}
\end{equation}
{\em Step 3. Analysis for flows in channels satisfying \eqref{4-27-2}.}
Firstly, under the assumption \eqref{4-27-2}, choose $T_0$ and $T$ to be sufficiently large such that
\begin{equation}\label{4-23-2}
	1+\int_{0}^{T_0}f^{-3}(\tau)\,d\tau\leq 2\int_{0}^{T_0}f^{-3}(\tau)\,d\tau\leq 2\int_{T_0}^{T_1}f^{-3}(\tau)\,d\tau.
\end{equation}
Recalling that $T_1-m_1(t_1)=T_0$, one uses \eqref{4-22-1} and \eqref{4-23-2} to obtain
\begin{equation*}
	\tilde{y}^+(t_1)\leq C_{26}\int_{T_1-m_1(t_1)}^{T_2+m_2(t_1)}f^{-3}(\tau) \,d\tau.
\end{equation*}
Now, we set $\delta_1=\frac12$,
\begin{equation*}
	\Psi(t,\tau)=C_{24}\left\{ [f^{-\frac23}(T_2+m_2(t))+f^{-\frac23}(T_1-m_1(t))]\tau +\tau ^\frac32 \right\},
\end{equation*}
and
\begin{equation*}
	\varphi(t)=(2C_{25}+C_{26})\int_{T_1-m_1(t)}^{T_2+m_2(t)}f^{-3}(\tau)\,d\tau+C_{27}f^{-2}(T),
\end{equation*}
where $C_{27}$ is to be chosen. Thus, one has
\begin{equation}\label{eqyPsiphi}
	\tilde{y}^+\leq \Psi(t,(\tilde{y}^+)')+\frac12 \varphi(t)
\quad
\text{and}
\quad
	\tilde{y}^+(t_1)\leq \varphi(t_1).
\end{equation}
Moreover, according to \eqref{4-26} and the definition of $\varphi(t)$ and $\Psi(t,\tau)$, it holds that
\begin{equation*}
	\begin{aligned}
		\varphi'(t)
		=&(2C_{25}+C_{26})\left(\frac{m_2'(t)}{f^3(T_2+m_2(t))}+\frac{m_1'(t)}{f^3(T_1-m_1(t))}\right)\\
		=&(2C_{25}+C_{26})\left(f^{-\frac43}(T_2+m_2(t))+f^{-\frac43}(T_1-m_1(t))\right).
	\end{aligned}
\end{equation*}
Therefore, it holds that
\begin{equation*}
	\begin{aligned}
		\Psi(t, \varphi'(t))=&C_{24}\left\{ [f^{-\frac23}(T_2+m_2(t))+f^{-\frac23}(T_1-m_1(t))]\varphi'(t)+[\varphi'(t) ]^\frac32 \right\}\\	
		\leq &C\left[f^{-2}(T_2+m_2(t))+f^{-2}(T_1-m_1(t))\right]\\
		=&C\left[ 2f^{-2}(T)+2\int_{T_1-m_1(t)}^T(f'f^{-3})(\tau)\,d\tau-2\int_T^{T_2+m_2(t)}(f'f^{-3})(\tau)\,d\tau\right]\\
		\leq &2C\left( f^{-2}(T)+\int_{T_1-m_1(t)}^{T_2+m_2(t)}(f'f^{-3})(\tau)\,d\tau\right)\\
		\leq&2C\left( f^{-2}(T)+ \gamma_0(T_0) \int_{T_1-m_1(t)}^{T_2+m_2(t)}f^{-3}(\tau)\,d\tau\right),
	\end{aligned}
\end{equation*}
where
\begin{equation*}
	\gamma_0(T_0): =\sup_{t\ge T_0}|f'(t)|.
\end{equation*}
According to the assumption \eqref{4-27-2}, one could choose sufficiently large $T_0$ and $C_{27}$ such that
\begin{equation}\label{eq133.5}
	\varphi(t)\ge 2\Psi(t, \varphi'(t)).
\end{equation}
Now, it follows from Lemma \ref{lemmaA4} that one has
\begin{equation}\label{4-29-2}
	\tilde{y}^+(t)\leq \varphi(t) \text{ for any }t\in[0,t_1].
\end{equation}

{\em Step 4. Analysis for flows in channels satisfying \eqref{4-27-3}. }
If instead of \eqref{4-27-2}, the assumption \eqref{4-27-3} holds, we choose $T_0$ and $T$ to be sufficiently large such that
\begin{equation*}
	\int_{T_0}^\infty f^{-3}(\tau)\,d\tau\leq 1\quad \text{and}\quad
	\int_{T_0}^{T_1} f^{-3}(\tau)\,d\tau\ge \frac12 \int_{T_0}^\infty f^{-3}(\tau)\,d\tau.
\end{equation*}
Hence, it holds that
\begin{equation}\label{4-23-3}
	1+\int_{0}^{T_0}f^{-3}(\tau)\,d\tau\leq 1+\int_{0}^{\infty}f^{-3}(\tau)\,d\tau\leq \frac{2C}{\int_{T_0}^\infty f^{-3}(\tau)\,d\tau} \int_{T_0}^{T_1}f^{-3}(\tau)\,d\tau.
\end{equation}
Recalling that  $T_1-m_1(t_1)=T_0$, one combines \eqref{4-22-1} and \eqref{4-23-3} to obtain
\begin{equation*}
	\begin{aligned}
		\tilde{y}^+(t_1)\leq &C \int_{T_0}^{T_2+m_2(t_1)}f^{-3}(\tau) \,d\tau+C \left(1+\int_{0}^{T_0}f^{-3}(\tau) \,d\tau\right)\\
		\leq& \frac{C_{26}}{\int_{T_0}^\infty f^{-3}(\tau)\,d\tau}\int_{T_1-m_1(t_1)}^{T_2+m_2(t_1)}f^{-3}(\tau) \,d\tau.
	\end{aligned}
\end{equation*}
Now,  set $\delta_1=\frac12$,
\begin{equation*}
	\Psi(t,\tau)=C_{24}\left([f^{-\frac23}(T_2+m_2(t))+f^{-\frac23}(T_1-m_1(t))]\tau +\tau ^\frac32 \right),
\end{equation*}
and
\begin{equation*}
	\varphi(t)=\left(2C_{25}+\frac{C_{28}}{\int_{T_0}^\infty f^{-3}(\tau)\,d\tau}\right)\int_{T_1-m_1(t)}^{T_2+m_2(t)}f^{-3}(\tau)\,d\tau+C_{29}f^{-2}(T),
\end{equation*}
where $C_{29}$ is to be determined. Then the inequalities in \eqref{eqyPsiphi} still hold.
Moreover, according to \eqref{4-26} and the definition of $\varphi(t)$ and $\Psi(t,\tau)$, one has
\begin{equation*}
	\begin{aligned}
		\varphi'(t)
		=&\left(2C_{25}+\frac{C_{28}}{\int_{T_0}^\infty f^{-3}(\tau)\,d\tau}\right)\left(\frac{m_2'(t)}{f^3(T_2+m_2(t))}+\frac{m_1'(t)}{f^3(T_1-m_1(t))}\right)\\
		=&\left(2C_{25}+\frac{C_{28}}{\int_{T_0}^\infty f^{-3}(\tau)\,d\tau}\right)\left(f^{-\frac43}(T_2+m_2(t))+f^{-\frac43}(T_1-m_1(t))\right).
	\end{aligned}
\end{equation*}
Hence,
\begin{equation*}
	\begin{aligned}
	\Psi(t, \varphi'(t))=&C_{24}\left([f^{-\frac23}(T_2+m_2(t))+f^{-\frac23}(T_1-m_1(t))]\varphi'(t)+[\varphi'(t) ]^\frac32 \right)\\	
		\leq &\frac{C}{\left(\int_{T_0}^\infty f^{-3}(\tau)\,d\tau\right)^\frac32}\left[f^{-2}(T_2+m_2(t))+f^{-2}(T_1-m_1(t))\right]\\	
		=&\frac{2C}{\left(\int_{T_0}^\infty f^{-3}(\tau)\,d\tau\right)^\frac32}\left( f^{-2}(T)+\int_{T_1-m_1(t)}^T(f'f^{-3})(\tau)\,d\tau-\int_T^{T_2+m_2(t)}(f'f^{-3})(\tau)\,d\tau\right)\\
		\leq&\frac{2C}{\left(\int_{T_0}^\infty f^{-3}(\tau)\,d\tau\right)^\frac32}\left( f^{-2}(T)+ \int_{T_1-m_1(t)}^{T_2+m_2(t)}| f'f^{-3}|(\tau)\,d\tau\right)\\
		\leq&2C\left( \frac{f^{-2}(T)}{\left(\int_{T_0}^\infty f^{-3}(\tau)\,d\tau\right)^\frac32} +\frac{\gamma_1(T_0)}{\int_{T_0}^\infty f^{-3}(\tau)\,d\tau}\int_{T_1-m_1(t)}^{T_2+m_2(t)}f^{-3}(\tau)\,d\tau\right),
	\end{aligned}
\end{equation*}
where
\begin{equation*}
	\gamma_1(T_0) =\frac{\sup_{t\ge T_0}|f'(t)|}{\left(\int_{T_0}^\infty f^{-3}(\tau)\,d\tau\right)^\frac12}.
\end{equation*}
According to the assumption \eqref{4-27-3}, one could choose sufficiently large $T_0$ and $C_{29}$ such that \eqref{eq133.5} holds. One can also get \eqref{4-29-2} with the aid of Lemma \ref{lemmaA4}.

{\em Step 5. Growth estimate.}
In particular, taking $t=0$ in \eqref{eq130.5} gives
\begin{equation}\label{4-28}
\|\nabla \Bv\|_{L^2(\Omega_{\hat{T},T})}^2\leq C\int_{T_1}^{T_2}f^{-3}(\tau)\,d\tau+Cf^{-2}(T).
\end{equation}
Finally, using the inequality \eqref{4-17}, one has
\begin{equation}\label{4-29}
\begin{aligned}
	\int_{T_1}^{T_2}f^{-3}(\tau)\,d\tau= &\int_{T_1}^{\hat{T}}f^{-3}(\tau)\,d\tau+\int_{\hat{T}}^Tf^{-3}(\tau)\,d\tau+  \int_{T}^{T_2}f^{-3}(\tau)\,d\tau\\
	\leq& 27\beta^* f^{-3}(\hat{T})f(T_1)+8\beta^* f^{-3}(T)f(T)+ 27 \beta^*f^{-3}(T)f(T_2)\\
	\leq& 54 \beta^* f^{-2}(\hat{T})+8\beta^* f^{-2}(T)+ 27 \beta^* f^{-3}(T) f(T_2)\\
	\leq &Cf^{-2}(T).
\end{aligned}
\end{equation}
Combining \eqref{4-28} and \eqref{4-29} gives
\begin{equation}\label{4-30}
\|\nabla \Bv\|_{L^2(\Omega_{\hat{T},T})}^2\leq \frac{C_{30}}{f^2(T)}.
\end{equation}
This, together with Lemma \ref{lemma1}, finishes the proof of the proposition.
\end{proof}

Similarly, one can also prove the estimate for $t<0$.
\begin{pro}\label{decay rate-left}
Let $\Bu=\Bv+\Bg$ be the solution obtained in Part (i) of Theorem \ref{unbounded channel}. Assume further that either
\begin{equation}\label{4-27-4}
	\int_{-\infty}^{0} f^{-3}(\tau)\,d\tau= \infty,\ \ \ \lim_{t\to -\infty}f'(t)= 0,
\end{equation}
or
\begin{equation}\label{4-27-5}
\int_{-\infty}^{0} f^{-3}(\tau)\,d\tau <\infty,\ \ \ \lim_{t\to -\infty}\frac{\sup_{ \tau \leq t} |f'(\tau) | }{\left|\int_{- \infty}^t  f^{-3}(\tau)\,d\tau\right|^\frac12}= 0.
\end{equation}
 Then there exists a constant $C_{31}$ depending only on $\Phi$, and $\Omega$ such that for any $t\geq 0$, one has
\begin{equation*}
\|\nabla \Bu\|_{L^2(\Omega_{-t,-t+\beta^* f(-t)})}^2\leq \frac{C_{31}}{  f^2(-t)}.
\end{equation*}

\end{pro}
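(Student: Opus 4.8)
The plan is to mirror the proof of Proposition~\ref{decay rate-right} with the two outlets interchanged, so that the decay is extracted at the left end $x_1\to-\infty$ under the hypotheses \eqref{4-27-4}--\eqref{4-27-5}. As there, I would first record that $t\mapsto t\pm\beta^* f(t)$ is strictly increasing, so that for a fixed $T>0$ one can uniquely define three points $\hat{T},T_1,T_2$ --- the mirror images of those in Proposition~\ref{decay rate-right}, now sitting in the left outlet --- arranged so that the slab $\Omega_{-T,\,-T+\beta^* f(-T)}$ plays the role of $\Omega_{\hat{T},T}$ and is nested inside a plateau. One then introduces the left-outlet truncating function $\tilde{\zeta}^-(x,t)$: the mirror of $\tilde{\zeta}^+$, equal to $\beta^*$ on a plateau containing $[-T,\,-T+\beta^* f(-T)]$ and dropping off linearly to $0$ on two ramps whose widths are comparable to the local channel width and whose moving endpoints are driven by auxiliary functions $m_1,m_2$ solving the ODEs of \eqref{4-26}; the parameter $t_1$ is fixed so that the inner moving endpoint reaches a cutoff $-T_0<0$. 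The gradient and $t$-derivative estimates for $\tilde{\zeta}^-$ are the mirror of \eqref{4-3}--\eqref{4-4}.

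Next I would take $\Bp=\tilde{\zeta}^-\Bv$ in the pressure identity \eqref{2-11} and repeat line for line the estimates of Lemma~\ref{lemma4} and of Step~2 of Proposition~\ref{decay rate-right}: Cauchy--Schwarz together with Lemma~\ref{lemma1} (and small $\varepsilon$) for $\tilde{\zeta}^-\Bv\cdot\nabla\Bv\cdot\Bg$, Young's inequality for the $\Bg$-forcing $-\nabla\Bg:\nabla\Bv+\Bg\cdot\nabla\Bv\cdot\Bg$, Lemmas~\ref{lemmaA1}--\ref{lemmaA2} for the ramp terms, and the Bogovskii map of Lemma~\ref{lemmaA5} (with $t$-uniform norm on the ramp strips) for the pressure term, arriving at a differential inequality of the form
\[
\tilde{y}^-(t)\leq C\Big\{\big[f^{-\frac23}(\cdot)+f^{-\frac23}(\cdot)\big](\tilde{y}^-)'(t)+\big[(\tilde{y}^-)'(t)\big]^{\frac32}\Big\}+C\int f^{-3}(\tau)\,d\tau,\qquad t\in[0,t_1],
\]
where $\tilde{y}^-(t)=\int_{\Omega}\tilde{\zeta}^-|\nabla\Bv|^2\,dx$ and the integral runs over the current support of $\tilde{\zeta}^-$, exactly as in \eqref{eq130.5}.

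The a priori value $\tilde{y}^-(t_1)$ is then bounded by the \emph{left} energy estimate \eqref{est11.6} of Proposition~\ref{unbounded exits-2} (resp. the corresponding bound in Proposition~\ref{prop-case3}), since the support of $\tilde{\zeta}^-$ at time $t_1$ lies in $\{x_1\le -T_0\}\subset\{x_1<0\}$; invoking hypothesis \eqref{4-27-4} or \eqref{4-27-5} to absorb the term $1+\int_{-T_0}^{0}f^{-3}$ into the growing integral gives $\tilde{y}^-(t_1)\leq C\int f^{-3}$ over the support at $t_1$. Choosing $\delta_1=\tfrac12$, $\Psi(t,\tau)=C\{[f^{-\frac23}(\cdot)+f^{-\frac23}(\cdot)]\tau+\tau^{\frac32}\}$, and a comparison function $\varphi$ of the same shape as in Steps~3--4 of Proposition~\ref{decay rate-right}, namely a large multiple of $\int f^{-3}(\tau)\,d\tau$ over the support plus a term $Cf^{-2}(-T)$, one verifies $\tilde{y}^-\leq\Psi(t,(\tilde{y}^-)')+\tfrac12\varphi$, $\tilde{y}^-(t_1)\leq\varphi(t_1)$, and $\varphi(t)\geq2\Psi(t,\varphi'(t))$; it is in this last inequality that the decay of $f'$ encoded in \eqref{4-27-4}--\eqref{4-27-5} enters, via $\sup_{\tau\le -T_0}|f'(\tau)|$ being small. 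Lemma~\ref{lemmaA4}(1) then yields $\tilde{y}^-(t)\leq\varphi(t)$ on $[0,t_1]$; evaluating at $t=0$, bounding $\int_{T_2}^{T_1}f^{-3}\leq Cf^{-2}(-T)$ via \eqref{4-17} as in \eqref{4-29}, and adding the flux-carrier contribution controlled by Lemma~\ref{lemma1} ($|\Bg|\leq C\Phi/f$, $|\nabla\Bg|\leq C\Phi/f^2$) gives $\|\nabla\Bu\|_{L^2(\Omega_{-T,\,-T+\beta^* f(-T)})}^2\leq C/f^2(-T)$, i.e. the claim with $T=t$.

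I expect the only genuine work, beyond bookkeeping the reflection of signs and domains, to be the same point as in Lemma~\ref{lemma4}: checking that the constants $M_1,M_4,M_5$ from Lemmas~\ref{lemmaA1}--\ref{lemmaA5} and the Bogovskii constant on the sliding ramp strips remain uniform in $T$ and $t$ as these strips move off into the unbounded left outlet. This is again guaranteed by the global Lipschitz bound $\|f'\|_{L^\infty}=2\beta$ together with the comparison \eqref{4-17}, so no new ideas are required; a pure reflection $x_1\mapsto-x_1$ does not shorten the argument because it reverses the sign of the flux, hence the mirrored-proof route above.
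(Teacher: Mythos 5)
Your proposal is correct and is essentially the paper's own argument: the paper proves Proposition \ref{decay rate-left} simply by remarking that the proof of Proposition \ref{decay rate-right} carries over "similarly" to the left outlet, i.e.\ exactly the mirrored construction you describe (left-outlet truncating function driven by the ODEs \eqref{4-26}, the energy/Bogovskii estimates as in Lemma \ref{lemma4}, the a priori bound from \eqref{est11.6} or Proposition \ref{prop-case3}, the comparison argument via Lemma \ref{lemmaA4} under \eqref{4-27-4} or \eqref{4-27-5}, and the final reduction to $Cf^{-2}(-T)$ via \eqref{4-17} and Lemma \ref{lemma1}). Your closing remark about uniformity of $M_1,M_4,M_5$ on the sliding strips and about why a naive reflection is not used matches the paper's treatment, so no gap remains.
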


With the help of the decay rate of the local Dirichlet norm of solutions obtained in Propositions \ref{decay rate-right}-\ref{decay rate-left}, we are ready to prove the uniqueness of solution when the flux $\Phi$ is sufficiently small.
\begin{pro}\label{unique-unbounded} Under the assumptions of Propositions \ref{decay rate-right}-\ref{decay rate-left}, there exists a constant $\Phi_2$ such that for any  $\Phi\in [0, \Phi_2)$, the solution $\Bu$ obtained in Part (i) of Theorem \ref{unbounded channel} is unique.
\end{pro}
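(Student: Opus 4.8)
The scheme is that of Proposition \ref{uniqueness}, the only structural novelty being that the uniform local bound supplied there by Proposition \ref{uniform estimate} must be replaced by the sharp decay estimates of Propositions \ref{decay rate-right}--\ref{decay rate-left}, which is exactly what compensates for the fact that, in a widening channel, the Sobolev constants on a flux--length strip grow with $f$. Let $\Bu=\Bv+\Bg$ be the solution from Part (i) of Theorem \ref{unbounded channel}, and let $\tilde\Bu$ be any other solution of \eqref{NS}--\eqref{BC} and \eqref{flux constraint} lying in the class \eqref{1-9}. Then $\oBu:=\tilde\Bu-\Bu$ solves the homogeneous system
\begin{equation*}
-\Delta\oBu+\oBu\cdot\nabla\Bu+\Bu\cdot\nabla\oBu+\oBu\cdot\nabla\oBu+\nabla p=0,\qquad \operatorname{div}\oBu=0\ \text{ in }\Omega,\qquad \oBu=0\ \text{ on }\partial\Omega,
\end{equation*}
with $\int_{\Sigma(x_1)}\oBu\cdot\Bn\,ds=0$ for every $x_1$ and \emph{no} source term. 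Since $\oBu$ vanishes on $\partial\Omega$, Lemma \ref{lemmaA1} applies to it, and since its flux vanishes at every section, $\bar u_1\in L_0^2$ on each boundary strip, so the Bogovskii construction of Lemma \ref{lemmaA5} is available for the pressure. One tests the equation with $\hat\zeta\,\oBu$, where $\hat\zeta(x,t)$ is the $k$--indexed truncating function \eqref{cut-off-hat} (or its one--sided analogue \eqref{cut-off-hat-new}) associated with the relevant geometric configuration of Section \ref{secexist2}, and integrates by parts to reach an identity of the form \eqref{3-16} with $\Bu$ and $\hat\zeta$ in place of $\BU$ and $\zeta$.

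The one genuinely new point is the interior convective term. Split $\hat\Omega_t$ into the boundary layer $\hat E$ and $\breve\Omega_t$, and cut $\breve\Omega_t$ into finitely many subdomains $\Omega^i=\{A_{i-1}<x_1<A_i\}$ of streamwise width comparable to $\beta^*f$ near $\Omega^i$, so that $M_4(\Omega^i)^2\le Cf$ by \eqref{defM4}. Using H\"older's inequality and Lemma \ref{lemmaA2} (applicable to both $\oBu$ and $\Bu$, which vanish on the lateral boundary $\partial\Omega\cap\partial\Omega^i$), together with $\hat\zeta\le\beta^*$,
\begin{equation*}
\int_{\Omega^i}\hat\zeta\,|\oBu\cdot\nabla\oBu\cdot\Bu|\,dx\le \beta^*\,M_4(\Omega^i)^2\,\|\nabla\Bu\|_{L^2(\Omega^i)}\,\|\nabla\oBu\|_{L^2(\Omega^i)}^2 .
\end{equation*}
Now Propositions \ref{decay rate-right}--\ref{decay rate-left} give $\|\nabla\Bu\|_{L^2(\Omega^i)}^2\le C_{31}f^{-2}$, so $M_4(\Omega^i)^2\|\nabla\Bu\|_{L^2(\Omega^i)}\le C\,C_{31}^{1/2}$ uniformly in $i$ and $t$; since $C_{31}$ inherits the scaling $C_{31}=O(\Phi^2)$ as $\Phi\to0$ from Lemma \ref{lemma1} and the energy estimates of Section \ref{secexist2}, summation over $i$ yields
\begin{equation*}
\Big|\int_{\breve\Omega_t}\hat\zeta\,\oBu\cdot\nabla\oBu\cdot\Bu\,dx\Big|\le C\Phi\sum_i\|\nabla\oBu\|_{L^2(\Omega^i)}^2\le C\Phi\!\int_{\hat\Omega_t}\hat\zeta\,|\nabla\oBu|^2\,dx ,
\end{equation*}
which for $\Phi<\Phi_2$ small is absorbed into $\tfrac12\int\hat\zeta|\nabla\oBu|^2$. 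The contribution of $\hat E$ to this term, the remaining boundary--layer terms, and the pressure term are handled exactly as in Lemma \ref{lemma4} and Proposition \ref{uniqueness}: one uses $|\partial_{x_1}\hat\zeta|=[f(h(\pm t))]^{-1}$ and $\partial_t\hat\zeta\ge\tfrac12[f(h(\pm t))]^{2/3}$ to convert the powers of $M_1(\hat E^\pm)\sim f$ and $M_4(\hat E^\pm)\sim f^{1/2}$ into clean powers of $\hat y'(t):=\int_{\hat E}|\nabla\oBu|^2\,dx$, with $M_5(\hat E^\pm)$ uniform in $t$ because $f'$ is bounded. Altogether
\begin{equation*}
\hat y(t):=\int_{\hat\Omega_t}\hat\zeta\,|\nabla\oBu|^2\,dx\le C_9\big\{\hat y'(t)+[\hat y'(t)]^{3/2}\big\}\qquad\text{for all sufficiently large }t .
\end{equation*}

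Since $\hat y$ is nonnegative and nondecreasing, Part (3) of Lemma \ref{lemmaA4} (with $\Psi(s)\le2C_9s^{3/2}$, i.e. $m=\tfrac32$) forces the dichotomy: either $\oBu\equiv0$ or $\liminf_{t\to\infty}t^{-3}\hat y(t)>0$. To exclude the second alternative one needs $\hat y(t)=o(t^3)$, which is where the hypotheses enter. Under \eqref{1-16} the channel widens slowly enough that $h(\pm t)=o(t^3)$, while $\tfrac{d}{dt}\int_0^{h(t)}f^{-3}(\tau)\,d\tau=f^{-4/3}(h(t))\le\underline d^{-4/3}$; combined with \eqref{1-9} and the existence estimates \eqref{est111.5}--\eqref{est11.6} and \eqref{case3-10}, this gives $\hat y(t)\le C(1+t+h(t)+|h(-t)|)=o(t^3)$. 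Under \eqref{1-17} one has $|\int_0^{\pm\infty}f^{-3}|<\infty$, so every solution in the class \eqref{1-9} has finite Dirichlet integral over the corresponding outlet, whence $\hat y(t)$ is bounded, a fortiori $o(t^3)$. In either case the cubic lower bound is contradicted, so $\oBu\equiv0$ and $\tilde\Bu=\Bu$.

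The heart of the argument is the uniform--in--$t$ control of the interior convective term: the widening makes the Sobolev constant $M_4(\Omega^i)^2$ grow like $f$, and this growth is cancelled precisely by the $f^{-2}$ decay of the local Dirichlet norm from Propositions \ref{decay rate-right}--\ref{decay rate-left} --- which is exactly why the hypotheses \eqref{1-16}--\eqref{1-17} are imposed --- and one must in addition keep track of the fact that $C_{31}$ is of order $\Phi^2$ so that the term is absorbable for small flux. A secondary, purely technical obstacle is to verify, in each of the three geometric configurations of Section \ref{secexist2} and their left/right combinations, that the truncating function can be chosen with a uniform Bogovskii constant $M_5(\hat E^\pm)$ and that the resulting bound on $\hat y(t)$ is genuinely $o(t^3)$.
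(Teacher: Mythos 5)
Your proposal follows essentially the same route as the paper's proof: test the equation for $\oBu=\tilde\Bu-\Bu$ with $\hat\zeta\,\oBu$ from \eqref{cut-off-hat}, split the convective term into the boundary layer $\hat E$ and a decomposition of $\breve\Omega_t$ into strips of streamwise width comparable to $\beta^*f$, cancel the growth $M_4^2\sim f$ against the decay $\|\nabla\Bu\|_{L^2}^2\le C f^{-2}$ from Propositions \ref{decay rate-right}--\ref{decay rate-left}, absorb for small $\Phi$, treat the remaining boundary and pressure terms as in Lemma \ref{lemma4} and Proposition \ref{uniqueness}, and conclude via the differential inequality $\hat y\le C\{\hat y'+(\hat y')^{3/2}\}$ and Lemma \ref{lemmaA4}. (That the absorbing constant vanishes as $\Phi\to 0$ is asserted at the same level of detail as in the paper, where the analogous constant $C_{32}$ is simply stated to tend to zero; estimating through $\Bu$ directly rather than through $\Bv$ and $\Bg$ separately is an immaterial variant.)

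The one step whose justification as written would fail is the exclusion of the alternative $\liminf_{t\to\infty}t^{-3}\hat y(t)>0$ under \eqref{1-16}: you assert that ``$h(\pm t)=o(t^3)$'' and bound $\hat y(t)\le C(1+t+h(t)+|h(-t)|)$. The first claim is false in general: \eqref{1-16} only requires $\int_0^{\pm\infty}f^{-3}=\infty$ and $f'\to0$, and one can build Lipschitz profiles $f$ with slowly decaying oscillations (tall plateaus separated by narrow dips to $\underline d$) for which $\int f^{-3}$ still diverges while $k(x)=\int_0^x f^{-5/3}$ grows only logarithmically in comparison with $x$, so that $h(t)$ grows superpolynomially; then $C(1+t+h(t)+|h(-t)|)$ is of no use. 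The correct (and simpler) route, which you already have in hand, is the one implicit in the paper: since $\frac{d}{dt}\int_{h(-t)}^{h(t)}f^{-3}(\tau)\,d\tau=f^{-\frac43}(h(t))+f^{-\frac43}(h(-t))\le 2\underline d^{-\frac43}$, one has $\int_{h(-t)}^{h(t)}f^{-3}\le Ct$, and then the class bound \eqref{1-9} (in its one-sided form, as in Propositions \ref{unbounded exits-2} and \ref{prop-case3}) applied to $\Bu$ and $\tilde\Bu$ gives $\hat y(t)\le \beta^*\|\nabla\oBu\|^2_{L^2(\Omega_{h(-t),h(t)})}\le C(1+t)=o(t^3)$, which rules out the cubic growth without any bound on $h$ itself. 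With this replacement your argument is complete and coincides with the paper's; under \eqref{1-17} your bounded-Dirichlet-integral argument is fine as stated.
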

\begin{proof}We divide the proof into three steps.

{\em Step 1. Set up.}
Assume that $\tilde{\Bu}$ is also a solution of problem \eqref{NS}-\eqref{BC} and \eqref{flux constraint} satisfying
\begin{equation*}
\|\nabla \tilde\Bu \|_{L^2(\Omega_{t})}^2\leq C\left(1+\int_{-t}^t f^{-3}(\tau)\,d\tau\right)\quad \text{for any}\,\, t>0.
\end{equation*}
 Then $\oBu :=
\tilde\Bu-\Bu$ is a weak solution to the problem 
\begin{equation}\label{3-15}
\left\{\begin{aligned}
&-\Delta \oBu +\oBu \cdot \nabla\Bu +\Bu\cdot \nabla \oBu +\oBu\cdot \nabla \oBu +\nabla p=0  ~~~~&\text{ in }\Omega,\\
&{\rm div}~\oBu =0&\text{ in }\Omega,\\
&\oBu =0  &\text{ on } \partial\Omega,\\
&\int_{\Sigma(x_1)}\oBu  \cdot \Bn \,ds=0 &\text{ for any }x_1\in \R.
\end{aligned}\right.
\end{equation}
Let $\hat{\zeta}( x,t)$ be the truncating function defined in \eqref{cut-off-hat}.  Testing the problem \eqref{3-15} by $\hat{\zeta}\oBu $ and using integration by parts $\Omega$ yield
\begin{equation}\label{4-31}
\begin{aligned}
	\int_{\Omega }\hat{\zeta} |\nabla\oBu |^2\,dx=&\int_{\breve{\Omega}_t} \hat{\zeta} \oBu\cdot \nabla\oBu\cdot\Bu\,dx+\int_{\hat{E}} \hat{\zeta} \oBu\cdot \nabla\oBu\cdot\Bu\,dx+ \int_{\Omega}p\overline{u}_1\partial_{x_1}\zeta\,dx\\
&+\int_{\hat{E}}\left[- \partial_{x_1}\oBu\cdot \oBu
+\frac{1}{2}(u_1+\overline{u}_1) |\oBu|^2+(\Bu\cdot \oBu) \overline{u}_1\right]\partial_{x_1}\hat{\zeta}\,dx
\end{aligned}
\end{equation}
where $\breve{\Omega}_t$ and $\hat{E}^\pm$ are defined in \eqref{4-6} and \eqref{4-7}.

{\em Step 2. Estimate for the Dirichlet norm.} Let $\breve{\Omega}_t^i=\{ x\in\Omega:~x_1\in(A_{i-1},A_i),~i=1,2,\cdots, N(t)\}$. Here the sequence $\{A_i\}$ satisfies $h_L(t)=A_0<\cdots<A_j=0 <A_{j+1}<\cdots < A_{N(t)}=h_R(t)$,
\begin{equation*}
\frac{\beta^*}{2}f(A_i) \leq  A_{i+1}-A_{i}\leq \beta^* f(A_{i}) \text{ for any }0\leq i\leq j-1
\end{equation*}
and
\begin{equation*}
\frac{\beta^*}{2} f(A_{i+1}) \leq  A_{i+1}-A_{i}\leq \beta^* f(A_{i+1})\text{ for any }j\leq i\leq  N(t)-1.
\end{equation*}
By Lemmas \ref{lemmaA1}, \ref{lemmaA2}, and \ref{lemma1}, and Propositions \ref{decay rate-right}-\ref{decay rate-left}, one has
\begin{equation}\label{4-32}
\begin{aligned}
	\int_{\breve{\Omega}_{t}}\hat{\zeta}\oBu \cdot\nabla \oBu \cdot \Bu\,dx
	\leq & \beta^* \sum_{i=1}^{N(t)}\int_{\breve{\Omega}_t^i }|\oBu \cdot\nabla \oBu \cdot \Bu|\,dx \\
	\leq &  \beta^* \sum_{i=1}^{N(t)}\|\nabla\oBu \|_{L^2(\breve{\Omega}_t^i )} \|\oBu \|_{L^4(\breve{\Omega}_t^i )}(\|\Bv\|_{L^4(\breve{\Omega}_t^i )}+\|\Bg\|_{L^4(\breve{\Omega}_t^i )}) \\
	\leq&\beta^* \sum_{i=1}^{N(t)}\|\nabla\oBu \|_{L^2(\breve{\Omega}_t^i )}^2(M_4^2\|\nabla\Bv\|_{L^2(\breve{\Omega}_t^i)}+M_4\|\Bg\|_{L^4(\breve{\Omega}_t^i )}) \\
	\leq&C\sum_{i=1}^{j}\|\nabla\oBu \|_{L^2(\breve{\Omega}_t^i )}^2(f(A_{i-1})\cdot f^{-1}(A_{i-1})+f^{\frac12}(A_{i-1})f^{-\frac12}(A_{i-1})) \\
	&+C\sum_{i=j+1}^{N(t)}\|\nabla\oBu \|_{L^2(\breve{\Omega}_t^i )}^2(f(A_i)\cdot f^{-1}(A_i)+f^{\frac12}(A_i)f^{-\frac12}(A_i)) \\
	\leq&C_{32}\sum_{i=1}^{N(t)}\|\nabla\oBu \|_{L^2(\breve{\Omega}_t^i )}^2 \\
	=&C_{32}\int_{\breve{\Omega}_{t}} |\nabla\oBu |^2 \, dx,
\end{aligned}
\end{equation}
where the constant $C_{32}$ goes to zero as $\Phi\to 0$. Hence there exists a $\Phi_2>0$, such that for any $\Phi\in [0,\Phi_2)$, one has
\begin{equation}\label{4-33}
\int_{\breve{\Omega}_{t}}\hat{\zeta}\oBu \cdot\nabla \oBu \cdot \Bu\,dx \leq \frac{1}{2}\int_{\hat{\Omega}_t}\hat{\zeta}|\nabla\oBu |^2\,dx.
\end{equation}

On the other hand, using Lemmas \ref{lemmaA1} and \ref{lemmaA2} yields
\begin{equation}\label{4-34}
\begin{aligned}
	&\int_{\hat{ E}^\pm}\left[-\hat{\zeta}\partial_{x_1}\oBu\cdot \oBu+ (\oBu \cdot \Bu)\overline{u}_1+\frac12|\oBu |^2(u_1+\overline{u}_1) \right] \partial_{x_1}\hat{\zeta}\,dx 
	 + \int_{\hat{ E}^\pm} \hat{\zeta}  \oBu\cdot\nabla \oBu\cdot\Bu \, dx \\
	\leq&C[f(h(\pm t))]^{-1}\left[ \|\nabla\oBu \|_{L^2(\hat{ E}^\pm)}\|\oBu \|_{L^2(\hat{ E}^\pm)}+\|\oBu \|_{L^4(\hat{ E}^\pm)}^2 \left(\|\Bu\|_{L^2(\hat{ E}^\pm)}+\|\oBu\|_{L^2(\hat{ E}^\pm)} \right)\right]\\
	&+\beta^* \|\oBu\|_{L^4(\hat{ E}^\pm)}\|\nabla \oBu\|_{L^2(\hat{ E}^\pm)}\|\Bu\|_{L^4(\hat{ E}^\pm)}\\
	\leq&C[f(h(\pm t))]^{-1}\left(M_1(\hat{ E}^\pm)\|\nabla\oBu \|_{L^2(\hat{ E}^\pm)}^2+M_4^2(\hat{ E}^\pm)\|\nabla \oBu \|_{L^2(\hat{ E}^\pm)}^2\|\Bu\|_{L^2(\hat{ E}^\pm)}\right)\\
	&+C[f(h(\pm t))]^{-1}M_1(\hat{ E}^\pm)M_4^2(\hat{ E}^\pm)\|\nabla\oBu\|_{L^2(\hat{ E}^\pm)}^3+\beta^* M_4(\hat{ E}^\pm)\|\nabla \oBu\|_{L^2(\hat{ E}^\pm)}^2\|\Bu\|_{L^4(\hat{ E}^\pm)}.
\end{aligned}
\end{equation}

Similar to \eqref{4-16}, one can estimate the term
 $\int_{\hat{ E}^\pm}p\overline{u}_1\partial_{x_1}\zeta\,dx$. More precisely,
\begin{equation}\label{4-35}
\begin{aligned}
	&\left|\int_{\hat{ E}^\pm}p\overline{u}_1\partial_{x_1}\hat{\zeta}\,dx\right|=\left|\int_{\hat{ E}^\pm}\partial_{x_1}\hat{\zeta} p{\rm div}\,\Ba\,dx\right|=[f(h(\pm t))]^{-1}\left|\int_{\hat{ E}^\pm} p{\rm div}\,\Ba\,dx\right|\\
	=&[f(h(\pm t))]^{-1}\left|\int_{\hat{ E}^\pm} \nabla\overline {\Bu}:\nabla\Ba+(\oBu \cdot \nabla \Bu +(\Bu+\oBu)\cdot \nabla \oBu )\cdot\Ba\,dx\right|\\
	=&[f(h(\pm t))]^{-1}\left|\int_{\hat{ E}^\pm} \nabla\overline {\Bu}:\nabla\Ba-\oBu \cdot \nabla \Ba\cdot\Bu -(\Bu+\oBu)\cdot \nabla \Ba \cdot \oBu\,dx\right|\\
	\le &C[f(h(\pm t))]^{-1}\|\nabla\Ba\|_{L^2(\hat{ E}^\pm)}\left(\|\nabla \overline {\Bu}\|_{L^2(\hat{ E}^\pm)}+\|\overline {\Bu}\|_{L^4(\hat{ E}^\pm)}\|\Bu\|_{L^4(\hat{ E}^\pm)}+\|\overline {\Bu}\|_{L^4(\hat{ E}^\pm)}^2\right)\\
	\le &C[f(h(\pm t))]^{-1}M_1(\hat{ E}^\pm)M_5(\hat{ E}^\pm)
\left(\|\nabla \overline {\Bu}\|_{L^2(\hat{ E}^\pm)}^2+M_4(\hat{ E}^\pm)\|\nabla\overline {\Bu}\|_{L^2(\hat{ E}^\pm)}^2\|\Bu\|_{L^4(\hat{ E}^\pm)}\right.\\
&\left.+M_4^2(\hat{ E}^\pm)\|\nabla \oBu\|_{L^2(\hat{ E}^\pm)}^3\right),
\end{aligned}
\end{equation}
where $\Ba\in H_0^1(\hat{E}^\pm)$  satisfies
\[
\text{div}~\Ba= \overline{u}_1\quad \text{in}\,\,\hat{E}^{\pm}
\]
and
\begin{equation*}
\|\nabla\Ba\|_{L^2(\hat{E}^\pm)} \leq M_5(\hat{ E}^\pm)\|\overline{u}_1\|_{L^2(\hat{E}^\pm)}.
\end{equation*}

Note that for the subdomain $\hat{ E}^\pm$,  $M_5(\hat{E}^\pm)$ is a uniform constant and  the constants $M_1(\hat{E}^\pm),M_4(\hat{E}^\pm)$ appeared in \eqref{4-34}-\eqref{4-35} satisfy the following estimates,
\begin{equation}\label{4-36}
 C^{-1} f(h(\pm t)) \leq M_1(\hat{ E}^\pm)\leq C f(h(\pm t)),  \ \  C^{-1} [f(h(\pm t))]^\frac12\leq M_4(\hat{ E}^\pm)\leq C [f(h(\pm t))]^\frac12.
\end{equation}
Moreover, according to Lemmas \ref{lemmaA1}-\ref{lemmaA2}, and \ref{lemma1}, and Propositions \ref{decay rate-right}-\ref{decay rate-left}, one has
\begin{equation}\label{4-37}
\begin{aligned}
	\|\Bu\|_{L^4(\hat{ E}^\pm)}\leq &\|\Bv\|_{L^4(\hat{ E}^\pm)}+\|\Bg\|_{L^4(\hat{ E}^\pm)}
	\leq M_4(\hat{ E}^\pm)\|\nabla\Bv\|_{L^2(\hat{ E}^\pm)}+\|\Bg\|_{L^4(\hat{ E}^\pm)}\\
	\leq& C[f(h(\pm t))]^{-\frac12}
\end{aligned}
\end{equation}
and
\begin{equation}\label{4-38}
\|\Bu\|_{L^2(\hat{ E}^\pm)}\leq \|\Bv\|_{L^2(\hat{ E}^\pm)}+\|\Bg\|_{L^2(\hat{ E}^\pm)}\leq M_1(\hat{ E}^\pm)\|\nabla\Bv\|_{L^2(\hat{ E}^\pm)}+\|\Bg\|_{L^2(\hat{ E}^\pm)}\leq C.
\end{equation}

{\em Step 3. Growth estimate.} Let
\begin{equation*}
\hat{y}(t)=\int_{\Omega}\hat{\zeta }|\nabla \oBu |^2\,dx.
\end{equation*}
Combining \eqref{4-31}-\eqref{4-38} gives the differential inequality
\begin{equation*}
\hat{y}\leq C\left[\hat{y}'+(\hat{y}')^\frac32\right].
\end{equation*}
It follows from Lemma \ref{lemmaA4} that one has either $\oBu=0$ or
\begin{equation*}
\liminf_{t\rightarrow + \infty} t^{-3}\hat{y}(t)>0.
\end{equation*}
Hence the proof of the proposition is completed.
\end{proof}
Combining Propositions \ref{unbounded exits-1}, \ref{unbounded exits-2}, and \ref{unique-unbounded} together finishes the proof of Theorem \ref{unbounded channel}.

\section{The flow converges at the point at infinity}
In this section, we study the pointwise decay rate of the velocity $\Bu$ obtained in Theorem \ref{unbounded channel}. Following the proof  of \cite[Theorem  \uppercase\expandafter{\romannumeral13}.1.1]{Ga}, one could also show that both the solution $\Bu$ obtained in Theorem \ref{unbounded channel} and the corresponding pressure $p$ are smooth in $\overline{\Omega}$.  Furthermore, as is proved in Propositions \ref{decay rate-right},  the Dirichlet norm of $\Bu$ satisfies 
\begin{equation*}
\|\nabla \Bu\|_{L^2(\Omega_{t-\beta^* f(t),t})}\leq \frac{C}{  f(t)}~~~\text{ for any }t\ge 0.
\end{equation*}
Then pointwise decay of $\Bu$ follows from a precise estimate of the high-order norm and the Sobolev embedding theorem.

First, we introduce the following lemma on the interior regularity of solutions to the Stokes equations, whose proof can be found in \cite{Ga}.
\begin{lemma}\label{interior regularity}
	Assume that $\Omega$ is an arbitrary domain in $\R^n$ with $n\ge 2$. Let $\Bu$ be weakly divergence-free with $\nabla\Bv\in L^q_{loc}{(\Omega)}$,$1<q<\infty$,and satisfying 
	\[\int_{\Omega}\nabla\Bv:\nabla\boldsymbol{\varphi}\,dx=\int_{\Omega} \Bf\cdot  \boldsymbol{\varphi}  \,dx  ~~\text{ for any }\boldsymbol{\varphi}\in C_{0,\sigma}^{\infty}{(\Omega)}.\]
	
	If $\Bf\in W_{loc}^{m,q}{(\Omega)}$ for some $ m\ge 0$, then it follows that $\Bv \in W_{loc}^{m+2,q}{(\Omega)}, p \in W_{loc}^{m+1,q}{(\Omega)}$, where $p$ is the pressure associated to $\Bv$. Further the following inequality holds:
\begin{equation}\label{6-3}
	\|\nabla^{m+2}\Bv\|_{L^{q}{(\Omega')}}+\|\nabla^{m+1}p\|_{L^{q}{(\Omega')}}\le C\left(\|\Bf\|_{W^{m,q}{(\Omega'')}}+\|\Bv\|_{W^{1,q}{(\Omega''\setminus\Omega')}}+\|p\|_{L^{q}{(\Omega''\setminus\Omega')}}\right),
\end{equation}
where $\Omega'$, $\Omega''$ are arbitrary bounded subdomains of $\Omega$ with $\overline{\Omega'}\subset\Omega''$, $\overline{\Omega''}\subset \Omega$,
and $C=C(n,q,m,\Omega',\Omega'')$.
\end{lemma}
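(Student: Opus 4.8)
The plan is to recover the pressure, reduce the local problem to the Stokes system on all of $\R^n$ by a cut-off, invoke the classical Calder\'on--Zygmund $L^q$-theory for the Stokes operator, and finish by induction on $m$.

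\textbf{Step 1: the pressure.} Since $\int_\Omega \na\Bv:\na\boldsymbol{\varphi}\,dx=\int_\Omega \Bf\cdot\boldsymbol{\varphi}\,dx$ for every $\boldsymbol{\varphi}\in C_{0,\sigma}^\infty(\Omega)$, the distribution $\lap\Bv+\Bf$ annihilates all smooth solenoidal test fields, so by de Rham's lemma there is $p\in\mathcal{D}'(\Omega)$, unique up to an additive constant on each connected component, with $-\lap\Bv+\na p=\Bf$ and $\div\Bv=0$ in $\mathcal{D}'(\Omega)$; this is the pressure associated to $\Bv$. Local $L^q$-integrability follows from the negative-norm (Ne\v{c}as) inequality on a ball $B\Subset\Omega$, $\|p-\bar p_B\|_{L^q(B)}\le C\|\na p\|_{W^{-1,q}(B)}$, together with $\na p=\lap\Bv+\Bf\in W^{-1,q}_{loc}(\Omega)$, which holds because $\Bv\in W^{1,q}_{loc}$ and $\Bf\in L^q_{loc}$.

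\textbf{Step 2: the base case $m=0$.} Fix $\Omega'\Subset\Omega''\Subset\Omega$, pick an intermediate $\Omega'\Subset\Omega_0\Subset\Omega''$ and $\eta\in C_0^\infty(\Omega_0)$ with $\eta\equiv1$ on $\Omega'$. Then $(\eta\Bv,\eta p)$ solves on $\R^n$
\[
-\lap(\eta\Bv)+\na(\eta p)=\eta\Bf-2\na\eta\cdot\na\Bv-(\lap\eta)\Bv+p\,\na\eta=:\BF,\qquad \div(\eta\Bv)=\na\eta\cdot\Bv=:g,
\]
with $\BF\in L^q(\R^n)$ and $g\in W^{1,q}(\R^n)$, both compactly supported, and all the commutator terms supported in $\Omega_0\setminus\Omega'\subset\Omega''\setminus\Omega'$. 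Applying the Calder\'on--Zygmund estimate for the Stokes system on $\R^n$ one gets $\|\na^2(\eta\Bv)\|_{L^q(\R^n)}+\|\na(\eta p)\|_{L^q(\R^n)}\le C(\|\BF\|_{L^q(\R^n)}+\|\na g\|_{L^q(\R^n)})$; since $\eta\equiv1$ on $\Omega'$ and the error terms live on $\Omega''\setminus\Omega'$ (while $\eta\Bf$ is controlled by $\|\Bf\|_{L^q(\Omega'')}$), this is precisely \eqref{6-3} for $m=0$, and in particular $\Bv\in W^{2,q}_{loc}$, $p\in W^{1,q}_{loc}$.

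\textbf{Step 3: induction on $m$.} Assume the statement up to $m-1$ and let $\Bf\in W^{m,q}_{loc}(\Omega)$. Differentiating the equation shows that, for each $k$, the pair $(\partial_k\Bv,\partial_k p)$ is again a weak solution of the Stokes system in the sense of the lemma with force $\partial_k\Bf\in W^{m-1,q}_{loc}(\Omega)$; the inductive hypothesis on slightly smaller domains yields $\partial_k\Bv\in W^{m+1,q}_{loc}$, hence $\Bv\in W^{m+2,q}_{loc}$ and $p\in W^{m+1,q}_{loc}$ (the latter also via $\na p=\lap\Bv+\Bf$), together with the corresponding bound on $\|\na^{m+2}\Bv\|_{L^q(\Omega')}+\|\na^{m+1}p\|_{L^q(\Omega')}$ in terms of $\|\Bf\|_{W^{m,q}}$ plus lower-order norms of $\Bv,p$ on intermediate annuli. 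Iterating over a finite chain $\Omega'\Subset\Omega_1\Subset\cdots\Subset\Omega_m\Subset\Omega''$ and inserting the already-proven lower-order estimates absorbs those intermediate norms into $\|\Bf\|_{W^{m,q}(\Omega'')}+\|\Bv\|_{W^{1,q}(\Omega''\setminus\Omega')}+\|p\|_{L^q(\Omega''\setminus\Omega')}$, giving \eqref{6-3}.

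\textbf{Main obstacle.} The delicate point is Step 2: the Calder\'on--Zygmund bound is an a priori estimate that presupposes $\na^2(\eta\Bv)\in L^q$. Making it rigorous requires either Nirenberg's difference-quotient method adapted to the Stokes system (subtle because of the pressure and the solenoidal constraint), or a mollification argument that identifies $\eta\Bv$ with the explicit solution given by convolution with the Oseen tensor and then reads off its regularity via the kernel estimates. Everything else is bookkeeping with nested cut-off domains.
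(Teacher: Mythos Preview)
The paper does not supply its own proof of this lemma; it merely cites Galdi's monograph \cite{Ga} (Theorem IV.4.1 there). Your outline---recover the pressure via de Rham/Ne\v{c}as, localize by a cut-off to reduce to the whole-space Stokes problem with inhomogeneous divergence, apply the Calder\'on--Zygmund estimate, then induct on $m$ using a chain of nested domains---is exactly the argument Galdi gives, so in that sense your proposal matches the reference the paper invokes.

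Your identification of the main obstacle is also accurate: the whole-space Stokes estimate in Step~2 is a priori, and Galdi closes this gap by the explicit representation via the Oseen fundamental tensor (rather than difference quotients), which immediately gives the required regularity of $\eta\Bv$ and $\eta p$ and identifies them with the unique decaying solution. One small point worth tightening: in Step~3, when you differentiate and apply the inductive hypothesis to $(\partial_k\Bv,\partial_k p)$, the right-hand side of \eqref{6-3} for that pair involves $\|\partial_k\Bv\|_{W^{1,q}}$ and $\|\partial_k p\|_{L^q}$ on an intermediate annulus, which are second- and first-order quantities respectively; you need the $m=0$ case already established on a slightly larger domain to control these before collapsing everything to the stated right-hand side. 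You allude to this (``inserting the already-proven lower-order estimates''), but it is the step where the nesting of domains really does the work, so it is worth spelling out.
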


\begin{remark}\label{qe}
If the domain $\Omega''\setminus\Omega'$, in the previous lemma, satisfies the cone condition, we can remove the term involving the pressure on the right-hand side of \eqref{6-3} by modifying $p$ with a constant. Therefore, we obtain 
\begin{equation}\label{6-5}
	\begin{aligned}
	\|\nabla^{m+2}\Bv\|_{L^{q}{(\Omega')}}+ \|\nabla^{m+1}p\|_{L^{q}{(\Omega')}}\le C\left(\|\Bf\|_{W^{m,q}{(\Omega'')}}+\|\Bv\|_{W^{1,q}{(\Omega''\setminus\Omega')}}\right).
	\end{aligned}
\end{equation}
To see this, we denote $D=\Omega''\setminus\Omega'$ for simplicity and let $\phi\in L^{q'}{(D)}$ be arbitrary, where $q'$ is the conjugate index of $q$.  Note that $\phi- \phi_D \in L^{q'}$ satisfies 
\[\int_{D} \phi-  \phi_D \,dx =0,\]
where $\phi_D=\frac{1}{|D|}\int_D \phi\,dx$. Then, according to Lemma \ref{lemmaA5}, the problem 
\[\operatorname{div} \boldsymbol{\Phi}= \phi- \phi_D\]
has at least one solution $\boldsymbol{\Phi}\in W^{1,q'}_0(D)$ such that 
\begin{equation}\label{6-6}
	\|\boldsymbol{\Phi}\|_{W^{1,q'}{(D)}}\le C\|\phi\|_{L^{q'}{(D)}}.
\end{equation}

Furthermore, the pressure $p$ associated to $\Bu$ satisfies
\[\int_D \nabla\Bv:\nabla\boldsymbol{\psi}\,dx= \int_D \Bf\cdot \boldsymbol{\psi}  \,dx +\int_D p \operatorname{div} \boldsymbol{\psi} \,dx~~~~\text{ for any }\boldsymbol{\psi} \in W_0^{1,q'}(D).\]
Taking the test function $\boldsymbol{\psi}=\boldsymbol{\Phi}$ and using integration by parts, we obtain 
\begin{equation*}
	\int_D (p-p_D)\phi\,dx=-\int_D \Bf\cdot \boldsymbol{\Phi}  \,dx+ \int_D \nabla\Bv:\nabla\boldsymbol{\Phi}\,dx,
\end{equation*}
where $p_D= \frac{1}{|D|} \int_D  p\,dx$. This, together with \eqref{6-6}, gives  
\[\begin{aligned}
	\left|\int_D (p-p_D)\phi\,dx\right|\leq &\|\Bf\|_{L^q(D)} \|\boldsymbol{\Phi}\|_{L^{q'}(D)}+\|\nabla \Bu\|_{L^q(D)} \|\nabla \boldsymbol{\Phi}\|_{L^{q'}(D)}   \\
	\leq& C(\|\Bf\|_{L^q(D)} +\| \Bv\|_{W^{1,q}(D)}) \|\phi\|_{L^{q'}(D)}.
\end{aligned}\]
By the arbitrariness of $\phi$, we deduce that
\begin{equation}\label{star}
	\begin{aligned}
		\|p-p_D\|_{L^{q}{(D)}}\le C\left(\|\Bf\|_{L^{q}{(D)}}+\|\Bv\|_{W^{1,q}{(D)}}\right).
	\end{aligned}
\end{equation}
Substituting \eqref{star} into \eqref{6-3}, we obtain \eqref{6-5}.
\end{remark}
With the aid of Lemma \ref{interior regularity}, one could improve the interior regularity of the solutions to the Navier-Stokes equation \eqref{NS}, by considering the nonlinear term $\Bu\cdot\nabla\Bu$ as the external force term $\Bf$. For any $\delta\in(0,\frac12)$, define 
\[\Omega_{\delta f}=\{x\in\Omega:~x_2\in (f_1(x_1)+\delta f(x_1),f_2(x_1)-\delta f(x_1))\},\]
where $f=f_2-f_1$ . Then we obtain the decay rate of $\Bu(x)$ for $x$ away from the boundary.
\begin{pro}\label{interior decay}
Let $\Bu=\Bv+\Bg$ be the solution obtained in Part (i) of Theorem \ref{unbounded channel}. Assume further that either \eqref{1-16} or \eqref{1-17} holds. Then for any $\delta\in (0,\frac12 )$  and $x\in \Omega_{\delta f}$, one has 
\[|\Bu(x)|\leq \frac{C\delta^{-1}}{f(x_1)},\]
where $C$ is a constant depending only on  $\Omega$, and $\Phi$.
\end{pro}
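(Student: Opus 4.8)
The plan is to deduce the pointwise bound from the local Dirichlet‑norm decay of Propositions \ref{decay rate-right}--\ref{decay rate-left} by a rescaling argument that turns an interior point of $\Omega_{\delta f}$ into the centre of a \emph{fixed‑size} ball on which the Navier--Stokes system is subject to a uniform interior regularity estimate. Fix $x^0=(x_1^0,x_2^0)\in\Omega_{\delta f}$ and set $d:=f(x_1^0)$. First I would record the elementary geometric fact that $\mathrm{dist}(x^0,\partial\Omega)\ge c_0\delta d$ with $c_0:=\min\{\tfrac12,\beta^*\}$ depending only on $\beta$: if a boundary point $(\xi,f_i(\xi))$ satisfies $|x_1^0-\xi|<\beta^*\delta d$ then $|f_i(\xi)-f_i(x_1^0)|\le 2\beta\cdot\beta^*\delta d=\tfrac12\delta d$, so by the definition of $\Omega_{\delta f}$ we get $|x_2^0-f_i(\xi)|\ge\tfrac12\delta d$, while if $|x_1^0-\xi|\ge\beta^*\delta d$ the distance is already at least $\beta^*\delta d$. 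Put $r:=c_0\delta d$, so that $B(x^0,r)\subset\Omega$ and, since $r\le\beta^*d$, also $B(x^0,r)\subset\Omega_{x_1^0-\beta^*f(x_1^0),\,x_1^0+\beta^*f(x_1^0)}$, a slab on which $\tfrac12 d\le f\le\tfrac32 d$ by \eqref{4-17}. Define the rescaled pair $\Bw(y):=r\,\Bu(x^0+ry)$, $q(y):=r^2p(x^0+ry)$ for $y\in B(0,1)$; by the scaling invariance of \eqref{NS}, $(\Bw,q)$ is a (smooth, by the regularity already noted at the start of this section) solution of \eqref{NS} in $B(0,1)$, and $|\Bu(x^0)|=r^{-1}|\Bw(0)|$.

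The second step is to bound $\Bw$ on $B(0,1)$ uniformly in $x^0$ and $\delta$. Covering the slab $\Omega_{x_1^0-\beta^*f(x_1^0),\,x_1^0+\beta^*f(x_1^0)}$ by a number of slabs bounded in terms of $\beta$ only, and applying Propositions \ref{decay rate-right} and \ref{decay rate-left} on each together with $f\sim d$ there, gives $\|\nabla\Bu\|_{L^2(\Omega_{x_1^0-\beta^*f,\,x_1^0+\beta^*f})}^2\le Cd^{-2}$. Since $\Bv=\Bu-\Bg$ vanishes on $\partial\Omega$, Lemma \ref{lemmaA2} (with $M_4\sim d^{1/2}$ on this slab) combined with the bounds $\|\nabla\Bg\|_{L^2}^2+\|\Bg\|_{L^4}^4\le Cd^{-2}$ from Lemma \ref{lemma1} (using $\int f^{-3}\le Cd^{-2}$ on the slab) yields $\|\Bu\|_{L^4(\Omega_{x_1^0-\beta^*f,\,x_1^0+\beta^*f})}^4\le Cd^{-2}$. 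Rescaling, $\|\nabla\Bw\|_{L^2(B(0,1))}^2=r^2\|\nabla\Bu\|_{L^2(B(x^0,r))}^2\le Cr^2d^{-2}=Cc_0^2\delta^2\le C$ and likewise $\|\Bw\|_{L^4(B(0,1))}^4=r^2\|\Bu\|_{L^4(B(x^0,r))}^4\le C$, where $C$ depends only on $\Phi$ and $\Omega$, not on $x^0$ or $\delta$.

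The third step is the interior bootstrap. Viewing $\Bw$ as a weak solution of the Stokes system in $B(0,1)$ with forcing $\Bf=-\Bw\cdot\nabla\Bw$, Hölder's inequality gives $\|\Bf\|_{L^{4/3}(B(0,1))}\le\|\Bw\|_{L^4}\|\nabla\Bw\|_{L^2}\le C$. Applying Lemma \ref{interior regularity} together with Remark \ref{qe} with $m=0$, $q=4/3$, $\Omega'=B(0,\frac12)$, $\Omega''=B(0,1)$ (the annulus $B(0,1)\setminus B(0,\frac12)$ satisfies the cone condition) produces $\|\Bw\|_{W^{2,4/3}(B(0,1/2))}\le C$, and since $W^{2,4/3}(B(0,\frac12))\hookrightarrow C^0(\overline{B(0,\frac12)})$ in two dimensions (because $2-2/(4/3)=\tfrac12>0$), we get $|\Bw(0)|\le C\|\Bw\|_{W^{2,4/3}(B(0,1/2))}\le C$ with $C$ again independent of $x^0,\delta$. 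Hence
\[
|\Bu(x^0)|=r^{-1}|\Bw(0)|\le\frac{C}{c_0\,\delta\,f(x_1^0)}=\frac{C'\delta^{-1}}{f(x_1^0)},
\]
which is exactly the asserted estimate.

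I expect the main obstacle to be not any single step but the bookkeeping needed to make every constant genuinely uniform in $\delta$ and in the base point $x^0$. The whole point of the rescaling factor $r=c_0\delta f(x_1^0)$ is that the transformed problem lives on a fixed ball $B(0,1)$, so the interior‑regularity constant of Lemma \ref{interior regularity} and the Sobolev constant do not degenerate as $\delta\to 0$; the price — the single factor $\delta^{-1}$ — then appears cleanly through $r^{-1}$. Accordingly the delicate points are (i) the lower bound $\mathrm{dist}(x^0,\partial\Omega)\ge c_0\delta f(x_1^0)$, (ii) the covering argument that transfers the slab‑type estimates of Propositions \ref{decay rate-right}--\ref{decay rate-left} (and Lemma \ref{lemma1}) to the slab $\Omega_{x_1^0-\beta^*f(x_1^0),\,x_1^0+\beta^*f(x_1^0)}$ with the correct power of $d$, and (iii) checking that, after rescaling, these bounds become precisely an $O(1)$ control of $\|\Bw\|_{L^4(B(0,1))}+\|\nabla\Bw\|_{L^2(B(0,1))}$; everything beyond this is the standard two‑dimensional regularity bootstrap for steady Navier--Stokes.
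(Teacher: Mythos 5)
Your proposal is correct and follows essentially the same route as the paper's proof: rescale around the interior point by a factor $r\sim\delta f(x_1^0)$ so the problem sits on a fixed ball, control the rescaled $W^{1,2}$/$L^4$ data via Propositions \ref{decay rate-right}--\ref{decay rate-left} together with Lemmas \ref{lemmaA1}, \ref{lemmaA2}, \ref{lemma1}, then apply Lemma \ref{interior regularity} with Remark \ref{qe} ($q=\tfrac43$, $m=0$, nonlinearity as forcing) and the embedding $W^{2,4/3}\hookrightarrow L^\infty$, and undo the scaling to produce the factor $\delta^{-1}/f(x_1^0)$. The only differences are cosmetic (your explicit distance bound and $L^4$ slab estimate versus the paper's $r^{-1}\|\Bu\|_{L^2}$ bookkeeping, and the minor point that one should take $\Omega''$ compactly contained in the rescaled ball, e.g. $B(0,\tfrac34)$), so no gap.
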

\begin{proof}
Fix any  $x^*=(x_1^*,x_2^*)\in \Omega_{\delta f}$ with $x_1^*>0$. Then there exists some $t>0$ such that $x_1^*=\frac{1}{2}(2t-\beta^* f(t))$. Moreover, one can verify that 
\[B_{r}(x^*) \subset \Omega_{t-\beta^*f(t),t} \text{ for any }r\leq \frac{\delta f(x_1^*)}{1+\beta}=:r_0.\]
Now we set $r=\frac{1}{4}r_0$ and denote 
\begin{equation}\label{scaling}
			\Bu^r(z)=r\Bu(x^*+rz),~~ p^r(z)=r^2p(x^*+rz).
	\end{equation}
The scaling property of Navier-Stokes system implies that  $(\Bu^r, p^r)$ is also a solution to the Navier-Stokes equations in $B_{2}(0)$, that is 
	\begin{equation}\label{NSv}
		\left\{
		\begin{aligned}
			&-\Delta \Bu^r+\Bu^r\cdot \nabla \Bu^r +\nabla p^r=0 ~~~~&\text{ in }B_{2}(0),\\
			&{\rm div}~\Bu^r=0&\text{ in }B_{2}(0).
		\end{aligned}\right.
	\end{equation}
Then we could apply Lemma \ref{interior regularity} with $\Omega''=B_2(0)$ and $\Omega'=B_1(0)$. In particular, taking $q=\frac{4}{3}, m=0$ in \eqref{6-5} and using Sobolev embedding inequality, we have 
\begin{equation}\label{6-8}
 \begin{aligned}
		\|\Bu^r\|_{L^\infty(B_1(0))}\leq &C	\|\Bu^r\|_{W^{2,\frac{4}{3}}{(B_1(0))}} \le C\left(\|\Bu^r\cdot\nabla\Bu^r\|_{L^{\frac{4}{3}}{(B_2(0))}}+\|\Bu^r\|_{W^{1,\frac{4}{3}}(B_2(0))}\right)\\
		\le& C\left(\|\nabla\Bu^r\|_{L^2{(B_2(0))}}  \|\Bu^r\|_{L^4{(B_2(0))}}+\|\Bu^r\|_{W^{1,\frac{4}{3}}(B_2(0))}\right)\\
		\le& C\left(\|\nabla\Bu^r\|_{L^2{(B_2(0))}}  \|\Bu^r\|_{W^{1,2}{(B_2(0))}}+\|\Bu^r\|_{W^{1,2}(B_2(0))}\right).
	\end{aligned}
\end{equation}
Straightforward computations give 
\[	\|\Bu^r\|_{L^\infty(B_1(0))}=r\|\Bu \|_{L^\infty(B^r(x^*))},~~\|\Bu^r\|_{L^2(B_2(0))}= \|\Bu\|_{L^2(B_{2r}(x^*))},\]
and 
\[\|\nabla \Bu^r\|_{L^2(B_2(0))} = r \|\nabla\Bu\|_{L^2(B_{2r}(x^*))}.\]
Then one has 
\begin{equation*}
\|\Bu \|_{L^\infty(B^r(x^*))} \leq Cr^{-1}(r\|\nabla\Bu\|_{L^2(B^r(x^*))}+1)(\| \Bu\|_{L^2(B^r(x^*))}+r\|\nabla\Bu\|_{L^2(B^r(x^*))}).
\end{equation*}
Finally, using Propositions \ref{decay rate-right} and Lemma \ref{lemmaA1}, it follows that 
\begin{equation*}
\begin{aligned}
\|\Bu \|_{L^\infty(B_{r}(x^*))} 
\leq&Cr\|\nabla\Bu\|_{L^2(B_{r}(x^*))}^2+  C\|\nabla\Bu\|_{L^2(B_{r}(x^*))}\\
&+C\|\nabla\Bu\|_{L^2(B_{r}(x^*))}\| \Bu\|_{L^2(B_{r}(x^*))}+ Cr^{-1}\| \Bu\|_{L^2(B_{r}(x^*))}\\
\leq& \frac{C\delta^{-1}}{f(x_1^*)},
\end{aligned}
\end{equation*}
since $\frac12f(t)\leq f(x_1^*) \leq \frac{3}{2} f(t)$.  The case that $x_1^*<0$ is similar. Then we finish the proof of this proposition.
\end{proof}

Next, we will consider the decay rate of $\Bu(x)$ for $x$ near the boundary $\partial\Omega$, that is, $x\in \Omega\setminus \Omega_{\delta f}$.  To this end, we introduce the following lemma on the regularity of solution to the Stokes equations in the half space $\R^2_+$.  See \cite{Ga} for the proof.

\begin{lemma}\label{bpm}
Assume that $m\ge 0$ and $1<q<\infty$. For every 
	\[
	\Bf\in W^{m,q}{(\R_+^n)}\text{ and } g \in W^{m+1,q}(\R^n_+),
	\]
 there exists a pair of functions $(\Bv, p)$ such that
\[
\Bv \in W^{m+2,q}{(Q)}, \quad p\in W^{m+1,q}{(Q)},
\]
for all open cubes $Q \subset \R_+^n$, solving a.e. the following nonhomogeneous Stokes system
\begin{equation}
	 \left\{\begin{aligned}
	-\Delta \Bv+\nabla p=&\Bf&\text{ in } \R^n_+,\\
	\nabla \cdot \Bv =&g&\text{ in } \R^n_+,\\
	\Bv=&0& \text{ on } \partial \R^n_+.
	\end{aligned}\right.
\end{equation} 
Moreover, for all $l\in[0,m]$, we have
\begin{equation}\label{bmpeq}
	\begin{aligned}
		\|\nabla^{l+2}\Bv\|_{L^{q}{(\R^n_+)}}+\|\nabla^{l+1}p\|_{L^{q}{(\R^n_+)}}\le C\left( \|\nabla^{l}\Bf\|_{L^{q}{(\R^n_+)}}+ \|\nabla^{l+1}g\|_{L^{q}{(\R^n_+)}} \right).
	\end{aligned}
\end{equation}
where $C=C(n,q,m)$. 
\end{lemma}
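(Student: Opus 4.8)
Since Lemma \ref{bpm} is the classical $L^{q}$-regularity theory for the steady Stokes system in a half-space, the plan is to reconstruct the standard argument based on the partial Fourier transform in the tangential variables (alternatively one could invoke the Agmon--Douglis--Nirenberg theory for elliptic systems satisfying the complementing boundary condition, which the Dirichlet Stokes problem does). \emph{First I would reduce to the case $g=0$.} Given $g\in W^{m+1,q}(\R^{n}_{+})$, I would produce a lifting $\Bw$ with $\nabla\cdot\Bw=g$ in $\R^{n}_{+}$, $\Bw=0$ on $\partial\R^{n}_{+}$, and $\|\nabla^{l+2}\Bw\|_{L^{q}(\R^{n}_{+})}\le C\|\nabla^{l+1}g\|_{L^{q}(\R^{n}_{+})}$ for every $0\le l\le m$---either from the half-space divergence-equation solver in \cite{Ga} or directly by the Fourier construction (transform in $x'\in\R^{n-1}$, take $\widehat{\Bw}$ with vanishing tangential part and normal part solving $\partial_{x_n}\widehat{w}_{n}=\widehat{g}$, $\widehat{w}_{n}(\xi',0)=0$, corrected by a boundary-layer term $\propto e^{-|\xi'|x_{n}}$ to preserve the $L^{q}$ bounds). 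Then $(\Bv-\Bw,p)$ solves the Stokes system with forcing $(\Bf+\Delta\Bw,0)$, and since $\nabla^{l}(\Bf+\Delta\Bw)=\nabla^{l}\Bf+\nabla^{l+2}\Bw$, it suffices to establish \eqref{bmpeq} in the case $g=0$.

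\emph{Next, for $g=0$ I would derive the explicit solution and its $L^{q}$ bounds.} Fourier-transforming in $x'$ turns the Stokes system into a constant-coefficient linear ODE system in $x_{n}\in(0,\infty)$ for each frequency $\xi'$; solving it and selecting the unique solution that decays as $x_{n}\to\infty$ and vanishes at $x_{n}=0$ gives $(\Bv,p)$ explicitly as one-dimensional convolutions in $x_{n}$---against kernels built from $e^{-|\xi'|x_{n}}$, $x_{n}|\xi'|e^{-|\xi'|x_{n}}$ and similar terms---composed with tangential Fourier multipliers of Riesz type $\xi'_{j}/|\xi'|$. Tangential derivatives commute with this structure and are controlled by the Mikhlin--H\"{o}rmander multiplier theorem in $x'$, whereas the composite operators $\Bf\mapsto\partial^{2}\Bv$ and $\Bf\mapsto\partial p$ are bounded on $L^{q}(\R^{n}_{+})$ by checking that their kernels (not translation invariant in $x_{n}$) obey the Calder\'{o}n--Zygmund size and H\"{o}rmander cancellation estimates, or equivalently by invoking an operator-valued Fourier multiplier theorem for the $\R^{n-1}$-transform with values in $L^{q}(0,\infty)$. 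Second and higher normal derivatives are then recovered from the equations themselves: $\partial_{x_n}^{2}v_{j}$ via $\Delta'v_{j}$, $f_{j}$, $\partial_{j}p$, and $\partial_{x_n}^{2}p$ via $\Delta'p$, which is controlled because $\Delta p=\nabla\cdot\Bf$; iterating this elliptic bootstrap in $x_{n}$ for $l=0,1,\dots,m$ yields \eqref{bmpeq}, and existence is immediate from the representation formula.

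The hard part will be the $L^{q}$-boundedness claim above: because of the boundary-layer terms and the loss of translation invariance in the normal direction this is not a bare Fourier-multiplier statement, so one must carry out a genuine Calder\'{o}n--Zygmund analysis of the half-space kernels (or set up the operator-valued multiplier machinery carefully). A secondary technical point is obtaining the lifting $\Bw$ with the sharp norms on the \emph{unbounded} half-space, where the usual Bogovskii construction does not apply verbatim. Everything else---the ODE solution, the commutation of tangential derivatives, and the normal-derivative bootstrap---is routine once these two points are settled.
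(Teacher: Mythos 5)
The paper does not actually prove this lemma: it is quoted as a classical fact about the half-space Stokes system and the text simply says ``See \cite{Ga} for the proof'' (it is Galdi's half-space theory, Theorem IV.3 in that book). So there is no internal argument to compare with; your overall strategy --- partial Fourier transform in $x'$, explicit ODE solution in $x_n$, Mikhlin/operator-valued multiplier and Calder\'on--Zygmund analysis of the non-translation-invariant kernels, then a normal-derivative bootstrap using $\Delta p=\nabla\cdot\Bf$ --- is indeed the classical route and is close in spirit to the proof in the cited source, so the outline is sound.

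As written, however, the proposal has a genuine gap, and it sits exactly at the step you present as a mere reduction. You need $\Bw$ with $\nabla\cdot\Bw=g$ in $\R^n_+$, $\Bw=0$ on $\partial\R^n_+$, and the purely homogeneous bound $\|\nabla^{l+2}\Bw\|_{L^q(\R^n_+)}\le C\|\nabla^{l+1}g\|_{L^q(\R^n_+)}$ for all $0\le l\le m$. The construction you sketch does not deliver this: for the primitive $\widehat{w}_n(\xi',x_n)=\int_0^{x_n}\widehat{g}(\xi',s)\,ds$ the purely tangential derivatives $\nabla'^{\,l+2}w_n$ cost $l+2$ derivatives of $g$ (one more than allowed) and the $x_n$-primitive is not even in $L^q$; moreover, adding a correction proportional to $e^{-|\xi'|x_n}$ changes the divergence, so restoring $\nabla\cdot\Bw=g$ forces nonzero tangential components and a boundary-layer ODE system whose $L^q$ bounds are precisely the non-translation-invariant half-space kernel estimates you have deferred as ``the hard part.'' In other words the lifting is not a secondary technical point: it is of the same analytic depth as the main estimate, so the reduction to $g=0$ buys nothing as stated. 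A cleaner way to organize the argument (and essentially what Galdi does) is to extend $\Bf$ and $g$ to all of $\R^n$, solve the whole-space Stokes problem there by Calder\'on--Zygmund theory --- which accommodates nonzero divergence directly --- and then correct the boundary values with the homogeneous half-space boundary-value problem, whose explicit Poisson-type kernels are the only objects requiring the delicate $L^q$ analysis. Until that kernel/multiplier analysis (the actual content of the lemma) is carried out, the proposal is a roadmap rather than a proof.
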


Then the following proposition gives the decay rate of $\Bu(x)$ for $x$ near the boundary.
\begin{pro}\label{boundary decay}
Let $\Bu=\Bv+\Bg$ be the solution obtained in Part (i) of Theorem \ref{unbounded channel}. Assume further that either \eqref{1-16} or \eqref{1-17} holds. Then there exists a constant $\delta_0$  such that for any $\delta\leq \delta_0$ and $x\in \Omega\setminus\Omega_{\delta f}$, one has 
\[|\Bu(x)|\leq \frac{C}{f(x_1)}.\]
Here $C$ is a constant depending only on $\Omega$, and $\Phi$.
\end{pro}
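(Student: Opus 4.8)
The plan is to mimic the proof of Proposition \ref{interior decay}, using a boundary Stokes regularity estimate on a rescaled and flattened piece of $\partial\Omega$ in place of the interior one. Fix $x^{*}=(x_{1}^{*},x_{2}^{*})\in\Omega\setminus\Omega_{\delta f}$; by the symmetries $x_{2}\mapsto-x_{2}$ and $x_{1}\mapsto-x_{1}$ it suffices to consider $x_{1}^{*}\ge 0$ with $x^{*}$ near the upper wall, i.e. $0<f_{2}(x_{1}^{*})-x_{2}^{*}\le\delta f(x_{1}^{*})$. Let $c_{0}\le\beta^{*}$ be a small absolute constant to be fixed, set $\rho:=c_{0}f(x_{1}^{*})$, $\delta_{0}:=c_{0}/2$, and let $\bar x^{*}=(x_{1}^{*},f_{2}(x_{1}^{*}))$ be the closest boundary point. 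By \eqref{4-17} and \eqref{assumpf}, for $c_{0}$ small $B_{2\rho}(\bar x^{*})\cap\Omega$ meets only the upper wall (its vertical distance to the lower wall being $\geq\tfrac12 f(x_{1}^{*})>2\rho$) and $\tfrac12 f(x_{1}^{*})\le f(x_{1})\le\tfrac32 f(x_{1}^{*})$ there. First I would rescale by $z=\rho^{-1}(x-\bar x^{*})$; the rescaled upper wall is a graph through the origin whose curvature is $O(\rho\|f_{2}''\|_{L^{\infty}})=O(c_{0}\underline{d}^{-1})$ by \eqref{assumpf''}. Straightening it around its tangent line at the origin and flattening the remaining small graph yields a diffeomorphism $\Psi$ of $B_{2\rho}(\bar x^{*})\cap\Omega$ onto a neighbourhood of $0$ in $\R^{2}_{+}=\{w_{2}>0\}$ containing $B_{1}^{+}:=B_{1}(0)\cap\R^{2}_{+}$, taking the wall to $\{w_{2}=0\}$ and differing from a rigid motion composed with the dilation by $\rho^{-1}$ by $O(c_{0})$ in $C^{1,1}$; in particular $\Psi^{-1}(B_{1}^{+})\supset B_{\rho/2}(\bar x^{*})\cap\Omega\ni x^{*}$.

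Second, set $\Bv^{\rho}(w):=\rho\,\Bu(\Psi^{-1}(w))$ and $p^{\rho}(w):=\rho^{2}p(\Psi^{-1}(w))$. A direct computation shows that $(\Bv^{\rho},p^{\rho})$ solves, in $B_{2}^{+}$,
\[
-\Delta\Bv^{\rho}+\nabla p^{\rho}=-\Bv^{\rho}\cdot\nabla\Bv^{\rho}+\BF,\qquad\nabla\cdot\Bv^{\rho}=G,\qquad\Bv^{\rho}=0\ \text{ on }\{w_{2}=0\},
\]
where $\BF$ collects the perturbation of the principal part generated by $D\Psi$ minus a rigid motion (bounded in $L^{\infty}(B_{2}^{+})$ by $Cc_{0}$), together with lower-order terms whose coefficients are controlled by \eqref{4-17}, \eqref{assumpf}, \eqref{assumpf''}, and $G$ is a lower-order term with $|G|\le Cc_{0}|\nabla\Bv^{\rho}|$. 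Absorbing the $Cc_{0}|\nabla^{2}\Bv^{\rho}|$ and $Cc_{0}|\nabla\Bv^{\rho}|$ contributions on the left for $c_{0}$ small, I would apply Lemma \ref{bpm} on the flat boundary piece together with Lemma \ref{interior regularity} and Remark \ref{qe} away from it, with $q=\tfrac43$ and $m=0$, to get
\[
\|\Bv^{\rho}\|_{W^{2,4/3}(B_{1}^{+})}\le C\big(\|\Bv^{\rho}\cdot\nabla\Bv^{\rho}\|_{L^{4/3}(B_{2}^{+})}+\|\Bv^{\rho}\|_{W^{1,4/3}(B_{2}^{+})}\big)\le C\big(\|\Bv^{\rho}\|_{W^{1,2}(B_{2}^{+})}^{2}+\|\Bv^{\rho}\|_{W^{1,2}(B_{2}^{+})}\big),
\]
the last step using H\"older's inequality and $W^{1,2}(B_{2}^{+})\hookrightarrow L^{4}(B_{2}^{+})$. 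Since $W^{2,4/3}(B_{1}^{+})\hookrightarrow L^{\infty}(B_{1}^{+})$ in dimension two, $\|\Bv^{\rho}\|_{L^{\infty}(B_{1}^{+})}$ is bounded by the same right-hand side.

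Third, I would unwind the scaling. From $\Bv^{\rho}(w)=\rho\,\Bu(\Psi^{-1}(w))$ and $|\det D\Psi|\sim\rho^{-2}$ one gets $\|\Bv^{\rho}\|_{L^{2}(B_{2}^{+})}\le C\|\Bu\|_{L^{2}(\Omega')}$ and $\|\nabla\Bv^{\rho}\|_{L^{2}(B_{2}^{+})}\le C\big(\rho\|\nabla\Bu\|_{L^{2}(\Omega')}+\|\Bu\|_{L^{2}(\Omega')}\big)$, where $\Omega':=\Psi^{-1}(B_{2}^{+})\subset\Omega_{x_{1}^{*}-2\rho,\,x_{1}^{*}+2\rho}$, which by \eqref{4-17} is contained in one (or, if it straddles $\{x_{1}=0\}$, at most two) strip(s) of the form $\Omega_{t-\beta^{*}f(t),t}$ or $\Omega_{-s,-s+\beta^{*}f(-s)}$. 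Propositions \ref{decay rate-right}--\ref{decay rate-left} (valid for all $t\ge 0$ under \eqref{1-16} or \eqref{1-17}) then give $\|\nabla\Bu\|_{L^{2}(\Omega')}\le C f(x_{1}^{*})^{-1}$, hence $\rho\|\nabla\Bu\|_{L^{2}(\Omega')}\le Cc_{0}$, and Lemma \ref{lemmaA1} (applicable since $\Bu=0$ on $\partial\Omega$) gives $\|\Bu\|_{L^{2}(\Omega')}\le C f(x_{1}^{*})\cdot f(x_{1}^{*})^{-1}=C$. Therefore $\|\Bv^{\rho}\|_{W^{1,2}(B_{2}^{+})}\le C$, the displayed right-hand side is $\le C$, and
\[
|\Bu(x^{*})|\le\|\Bu\|_{L^{\infty}(\Psi^{-1}(B_{1}^{+}))}=\rho^{-1}\|\Bv^{\rho}\|_{L^{\infty}(B_{1}^{+})}\le\frac{C}{\rho}=\frac{C}{c_{0}f(x_{1}^{*})}.
\]
The lower wall and the case $x_{1}^{*}<0$ are handled identically, using the reflections and Proposition \ref{decay rate-left} in place of Proposition \ref{decay rate-right}.

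The \textbf{main obstacle} is the boundary regularity step: one must flatten the curved wall near $\bar x^{*}$ so that, after the dilation by $\rho^{-1}=(c_{0}f(x_{1}^{*}))^{-1}$, the transformed Stokes system is a genuinely \emph{small} perturbation of the flat half-space system of Lemma \ref{bpm}. This forces the straightening to be performed around the tangent line at $\bar x^{*}$ rather than by the naive graph map $w_{2}=\rho^{-1}(f_{2}(x_{1})-x_{2})$ (whose Jacobian differs from a rotation by $\sim\|f_{2}'\|_{L^{\infty}}$, bounded but not small), and it requires verifying that every coefficient in $\BF$ and in the transformed divergence $G$ is bounded uniformly in $x^{*}$ by the structural hypotheses \eqref{assumpf}--\eqref{assumpf''}, so that the $O(c_{0})$ terms can be absorbed. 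Granting this, splicing Lemma \ref{bpm} with the interior estimate of Lemma \ref{interior regularity}, combining with the local Dirichlet decay of Propositions \ref{decay rate-right}--\ref{decay rate-left} and the Poincar\'e inequality of Lemma \ref{lemmaA1}, and unwinding the scaling is routine and parallels Proposition \ref{interior decay}.
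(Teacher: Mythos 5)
Your plan follows essentially the same route as the paper's proof: rescale by a multiple of $f(x_1^*)$ about the nearest boundary point, rotate so that the wall's tangent there becomes horizontal, flatten the resulting small-slope, bounded-curvature graph, treat the transformed system as an absorbable perturbation of the half-space Stokes problem of Lemma \ref{bpm} with $q=\frac43$ (removing the localized pressure as in Remark \ref{qe}), and unwind via the embedding $W^{2,\frac43}\hookrightarrow L^\infty$, Propositions \ref{decay rate-right}--\ref{decay rate-left}, and Lemma \ref{lemmaA1}. The only cosmetic difference is that you gain smallness of the perturbation through the extra factor $c_0$ in the scaling radius, whereas the paper scales by $f(x_1^*)$ itself and obtains the smallness of $\zeta'$ from $\zeta'(0)=0$ and a bounded $\zeta''$ on a neighborhood of fixed structural size furnished by the implicit function theorem.
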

\begin{proof} We divide the proof into several steps.

{\em Step 1.} Fix any  $x^*=(x_1^*,x_2^*)\in \Omega\setminus \Omega_{\delta f}$. Without loss of generality, we assume that  the point $x^*$  with $x_1^*>0$ is near the upper boundary, that is, 
\begin{equation}\label{7-1}
0<f_2(x_1^*)-x_2^*<\delta f(x_1^*).
\end{equation}
There exists some $t$ such that $x_1^*=\frac{1}{2}(2t-\beta^* f(t))$.

Let $\bar{x}^*=(x_1^*,f_2(x_1^*))$ be the corresponding point of $x$ on the upper boundary. Similar to \eqref{scaling}, we introduce the scaling function
\begin{equation} 
			\Bu^r(z)=r\Bu(\bar{x}^*+rz) \text{ and } p^r(z)=r^2p(\bar{x}^*+rz).
	\end{equation}
Then for any $r>0$, $(\Bu^r,p^r)$ satisfies
	\begin{equation}\label{NSv}
		\left\{
		\begin{aligned}
			&-\Delta \Bu^r+\Bu^r\cdot \nabla \Bu^r +\nabla p^r=0  \\
			&{\rm div}~\Bu^r=0
		\end{aligned}\right.~~~~\text{ in }\Omega^r_{t-\beta^*f(t),t},
	\end{equation}
	where
	\[\Omega^r_{t-\beta^*f(t),t}=\{z:~\bar{x}^*+rz \in \Omega_{t-\beta^*f(t),t}\}.\]
	Note that $\Omega^r_{t-\beta^*f(t),t}$ is also a channel type domain of the form 
	\[\Omega^r_{t-\beta^*f(t),t}=\{z:~r^{-1}(t-\beta^*f(t)-x_1^*)<z_1< r^{-1}(t-x_1^*),~f_1^r(z_1)<z_2<f_2^r(z_1)\},\]
	where $f_i^r(z_1)=r^{-1}\left(f_i(rz_1+x_1^*)-f_2(x_1^*)\right)$ for $i=1,2$.

	In the rest of the proof, we choose $r=f(x_1^*)$ so that 
\[\frac12 f(t)\leq r\leq \frac32f(t).\]	
	Here we give some properties of domain $\Omega_{t-\beta^*f(t),t}^r$, which will be used later. Clearly, the points $x^*,\bar{x}^*$ become $z^*:=(0,r^{-1}(x_2^*-f_2(x_1^*)))$ and $(0,0)$ in the $z$-coordinate, respectively. Due to \eqref{7-1}, we have 
	\[|z^*| =r^{-1}(x_2^*-f_2(x_1^*))\leq   2\delta \frac{f(x_1^*)}{f(t)}\leq 3\delta.\]
	According to the assumptions \eqref{assumpf}  and \eqref{assumpf''} on $f_i$, one has 
	\begin{equation}\label{7-3}
	 |(f_i^r)'(z_1)|\leq \|f_i'\|_{L^\infty}=\beta
	\end{equation}
	and 
	\begin{equation}\label{7-4}
	|(f_i^r)''(z_1)|=|rf_i''(rz_1+x_1^*)|\leq 3|(ff_i'')(rz_1+x_1^*)| =3\gamma,
	\end{equation}
	for any $z\in \Omega_{t-\beta^*f(t),t}^r$. Furthermore, the width of the domain $\Omega_{t-\beta^*f(t),t}^r$ satisfies 
	\[\frac{2}{3} \beta^* \leq r^{-1} \beta^*f(t) \leq 2\beta^*.\]
	Then for any $z\in \Omega_{t-\beta^*f(t),t}^r$, we have 
	\begin{equation}
	    |f_2^r(z_1)| \leq \beta \cdot \beta^*=\frac{1}{4}  \text{ and }
	    |f_1^r(z_1)+1| \leq \frac{1}{4},
	\end{equation}
since $f_2^r(0)=0$ and $f_1^r(0)=-1$. Finally, we denote
\[d_0=\min\left\{\frac{1}{3}\beta^*,\frac12\right\},\]
so that the ball $B_{d_0}(0)$ is above the lower boundary of $\Omega_{t-\beta^* f(t)}^r$.
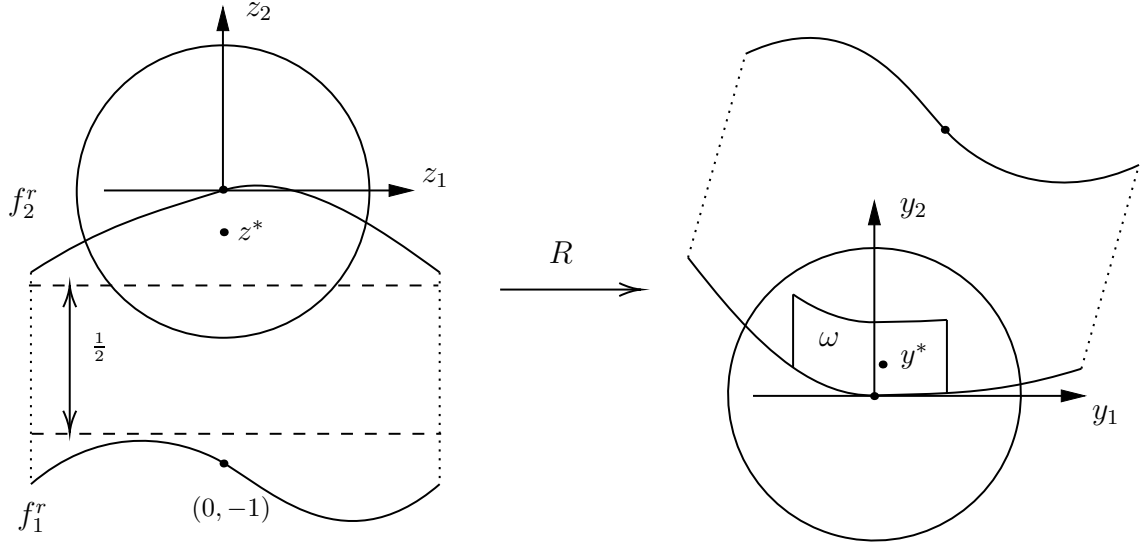
\begin{figure}[h]
	\centering

	\tikzset{every picture/.style={line width=0.75pt}} 

	\begin{tikzpicture}[x=0.75pt,y=0.75pt,yscale=-1,xscale=1]
	
	\draw    (60.36,139.37) .. controls (97,115.56) and (120.37,109.51) .. (157.19,98.04) .. controls (194,86.56) and (236.2,117.31) .. (266.51,139.37) ;
	\draw    (60.36,246.28) .. controls (92.43,218.49) and (127.95,220.98) .. (152.4,232.81) .. controls (176.85,244.65) and (216.12,289.93) .. (266.51,246.28) ;
	\draw  [line width=3] [line join = round][line cap = round] (157.2,97.64) .. controls (157.2,97.64) and (157.2,97.64) .. (157.2,97.64) ;
	\draw  [line width=3] [line join = round][line cap = round] (157.48,235.85) .. controls (157.48,235.85) and (157.48,235.85) .. (157.48,235.85) ;
	\draw  [line width=3] [line join = round][line cap = round] (157.62,119.12) .. controls (157.62,119.12) and (157.62,119.12) .. (157.62,119.12) ;
	\draw  [dash pattern={on 0.84pt off 2.51pt}]  (60.36,139.37) -- (60.36,246.28) ;
	\draw  [dash pattern={on 0.84pt off 2.51pt}]  (266.51,139.37) -- (266.51,246.28) ;
	\draw  [dash pattern={on 4.5pt off 4.5pt}]  (59.35,145.99) -- (265.83,145.99) ;
	\draw  [dash pattern={on 4.5pt off 4.5pt}]  (60.77,220.86) -- (267.25,220.86) ;
	\draw    (79.93,147.99) -- (79.93,218.17) ;
	\draw [shift={(79.93,220.17)}, rotate = 270] [color={rgb, 255:red, 0; green, 0; blue, 0 }  ][line width=0.75]    (10.93,-3.29) .. controls (6.95,-1.4) and (3.31,-0.3) .. (0,0) .. controls (3.31,0.3) and (6.95,1.4) .. (10.93,3.29)   ;
	\draw [shift={(79.93,145.99)}, rotate = 90] [color={rgb, 255:red, 0; green, 0; blue, 0 }  ][line width=0.75]    (10.93,-3.29) .. controls (6.95,-1.4) and (3.31,-0.3) .. (0,0) .. controls (3.31,0.3) and (6.95,1.4) .. (10.93,3.29)   ;
	\draw   (83.46,98.59) .. controls (83.46,57.83) and (116.51,24.78) .. (157.27,24.78) .. controls (198.03,24.78) and (231.08,57.83) .. (231.08,98.59) .. controls (231.08,139.35) and (198.03,172.39) .. (157.27,172.39) .. controls (116.51,172.39) and (83.46,139.35) .. (83.46,98.59) -- cycle ;
	\draw    (590.14,188.01) .. controls (563.04,196.42) and (543.36,199.14) .. (522.33,200.3) .. controls (510.98,200.93) and (499.24,201.1) .. (485.73,201.46) .. controls (471.11,201.85) and (457.4,196.19) .. (444.75,187.4) .. controls (424.03,173.01) and (406.14,150.25) .. (391.76,131.95) ;
	\draw    (619.22,85.14) .. controls (580.8,103.15) and (547.29,91.1) .. (526.98,73.06) .. controls (506.67,55.02) and (481.19,0.77) .. (420.83,29.07) ;
	\draw  [line width=3] [line join = round][line cap = round] (485.61,201.84) .. controls (485.61,201.84) and (485.61,201.84) .. (485.61,201.84) ;
	\draw  [line width=3] [line join = round][line cap = round] (521.25,67.43) .. controls (521.25,67.43) and (521.25,67.43) .. (521.25,67.43) ;
	\draw  [line width=3] [line join = round][line cap = round] (489.91,185.87) .. controls (489.91,185.87) and (489.91,185.87) .. (489.91,185.87) ;
	\draw  [dash pattern={on 0.84pt off 2.51pt}]  (590.14,188.01) -- (619.22,85.14) ;
	\draw  [dash pattern={on 0.84pt off 2.51pt}]  (391.76,131.95) -- (420.83,29.07) ;
	\draw   (556.82,220.98) .. controls (545.73,260.2) and (504.95,283.01) .. (465.72,271.93) .. controls (426.5,260.84) and (403.69,220.06) .. (414.77,180.83) .. controls (425.86,141.61) and (466.64,118.8) .. (505.87,129.88) .. controls (545.09,140.97) and (567.9,181.75) .. (556.82,220.98) -- cycle ;
	\draw    (590,201.72) -- (424.53,201.72) ;
	\draw [shift={(592,201.72)}, rotate = 180] [fill={rgb, 255:red, 0; green, 0; blue, 0 }  ][line width=0.08]  [draw opacity=0] (12,-3) -- (0,0) -- (12,3) -- cycle    ;
	\draw    (485.73,201.46) -- (485.73,105.37) ;
	\draw [shift={(485.73,103.37)}, rotate = 90] [fill={rgb, 255:red, 0; green, 0; blue, 0 }  ][line width=0.08]  [draw opacity=0] (12,-3) -- (0,0) -- (12,3) -- cycle    ;
	\draw    (444.75,187.4) -- (444.75,150.47) ;
	\draw    (522.33,200.3) -- (522.33,163.37) ;
	\draw    (522.33,163.37) .. controls (501.31,164.53) and (499.24,164.17) .. (485.73,164.53) .. controls (472.21,164.89) and (457.4,159.26) .. (444.75,150.47) ;
	\draw    (297,148.06) -- (366,148.06) ;
	\draw [shift={(368,148.06)}, rotate = 180] [color={rgb, 255:red, 0; green, 0; blue, 0 }  ][line width=0.75]    (10.93,-3.29) .. controls (6.95,-1.4) and (3.31,-0.3) .. (0,0) .. controls (3.31,0.3) and (6.95,1.4) .. (10.93,3.29)   ;
	\draw    (97.19,98.04) -- (251,98.04) ;
	\draw [shift={(253,98.04)}, rotate = 180] [fill={rgb, 255:red, 0; green, 0; blue, 0 }  ][line width=0.08]  [draw opacity=0] (12,-3) -- (0,0) -- (12,3) -- cycle    ;
	\draw    (157.19,98.04) -- (157.19,7.37) ;
	\draw [shift={(157.19,5.37)}, rotate = 90] [fill={rgb, 255:red, 0; green, 0; blue, 0 }  ][line width=0.08]  [draw opacity=0] (12,-3) -- (0,0) -- (12,3) -- cycle    ;

	\draw (47.19,95.53) node [anchor=north west][inner sep=0.75pt]    {$f_{2}^{r}$};
	\draw (51.92,253.98) node [anchor=north west][inner sep=0.75pt]    {$f_{1}^{r}$};
	\draw (139.87,250.8) node [anchor=north west][inner sep=0.75pt]  [font=\footnotesize]  {$( 0,-1)$};
	\draw (89.33,168.17) node [anchor=north west][inner sep=0.75pt]  [font=\scriptsize]  {$\frac{1}{2}$};
	\draw (163.12,111.53) node [anchor=north west][inner sep=0.75pt]  [font=\small]  {$z^{*}$};
	\draw (456.48,168.07) node [anchor=north west][inner sep=0.75pt]    {$\omega $};
	\draw (497.5,173.37) node [anchor=north west][inner sep=0.75pt]  [font=\small]  {$y^{*}$};
	\draw (320,122.27) node [anchor=north west][inner sep=0.75pt]    {$R$};
	\draw (256,85.4) node [anchor=north west][inner sep=0.75pt]    {$z_{1}$};
	\draw (167,1.4) node [anchor=north west][inner sep=0.75pt]    {$z_{2}$};
	\draw (497,100.4) node [anchor=north west][inner sep=0.75pt]    {$y_{2}$};
	\draw (594,205.12) node [anchor=north west][inner sep=0.75pt]    {$y_{1}$};

	\end{tikzpicture}
\caption{Scaling and rotation of the domain}	

\end{figure}

{\em Step 2.} Next, we introduce a new coordinate system by rotation such that the unit outer normal vector of $\partial\Omega_{t-\beta^* f(t),t}^r$ at $z=(0,0)$ becomes $(0,-1)$. In fact, we define the transformation $z\mapsto y$ as follows,
\begin{equation}\label{6-11}
	\begin{aligned}
	 \begin{pmatrix}
		y_1\\y_2
	\end{pmatrix}=\begin{pmatrix}
		R_{11} &R_{12}  \\
		R_{21}& R_{22} \\
	\end{pmatrix} \begin{pmatrix}
		z_1 \\z_2 
	\end{pmatrix}	=R \begin{pmatrix}
	z_1 \\z_2
\end{pmatrix}.
	\end{aligned}
\end{equation}
Here $R$ is a rotation matrix satisfying $R\cdot R^T=I$, and $R_{11}=R_{22},R_{12}=-R_{21}$.

In the original $z$-coordinate system, the unit outer normal vector at $z=(0,0)$ is
 \[\Bn^r=\left(\frac{-(f_{2}^r)'(0)}{\sqrt{|(f_{2}^r)'(0)|^2+1}},\frac{1}{\sqrt{|(f_{2}^r)'(0)|^2+1}}\right).\] 
 Noting that 
 \begin{equation}
		 \begin{pmatrix}
			0\\-1
		\end{pmatrix}=\begin{pmatrix}
	R_{11}	& R_{12} \\
	R_{21}	& R_{22} \\
	\end{pmatrix}
	\begin{pmatrix}
		\frac{-(f_{2}^r)'(0)}{\sqrt{|(f_{2}^r)'(0)|^2+1}}\\\frac{1}{\sqrt{|(f_{2}^r)'(0)|^2+1}}
	\end{pmatrix},
\end{equation}
then we can figure out a specific representation of the rotation matrix, that is, 
\begin{equation}
    \begin{pmatrix}
	R_{21}\\R_{22}
\end{pmatrix}=\begin{pmatrix}
\frac{(f_{2}^r)'(0)}{\sqrt{|(f_{2}^r)'(0)|^2+1}}\\-\frac{1}{\sqrt{|(f_{2}^r)'(0)|^2+1}}
\end{pmatrix}.
\end{equation}
Let
\begin{equation}
	\begin{aligned}
		F(y_1,y_2) =f_2^r(z_1)-z_2= f_{2}^r( R_{11}y_1+R_{21}y_2)-(R_{12}y_1+R_{22}y_2).
	\end{aligned}
\end{equation}
Straightforward computations give that $F(0,0)=0$ and 
\begin{equation} \label{7-2}
	\begin{aligned}
\p_ {y_2} F(y_1,y_2)=&R_{21}(f_{2}^r)'(z_1)-R_{22}=\frac{(f_{2}^r)'(0)(f_{2}^r)'(z_1)+1}{\sqrt{|(f_{2}^r)'(0)|^2+1}}\\
=&\frac{|(f_{2}^r)'(0)|^2+1+[(f_{2}^r)'(z_1)-(f_{2}^r)'(0)](f_{2}^r)'(0)}{\sqrt{|(f_{2}^r)'(0)|^2+1}}.
	\end{aligned}
\end{equation}
In particular, we have $\p_ {y_2} F(0,0)=\sqrt{|(f_{2}^r)'(0)|^2+1}>0$. According to the implicit function theorem, there exists a positive constant $L_0>0$ and a $C^1$ function $y_2=\zeta(y_1)$ defined on $[-L_0,L_0]$ such that
\[F(\zeta(y_1),y_1)=0 \text{ for any }y_1\in [-L_0,L_0].\]

{\em Step 3.} We claim that the function $\zeta(y_1)$ can be extended to the interval $[-\frac{1}{4M},-\frac{1}{4M}]$, 
where \[M:=\max\left\{\beta+3\gamma, \frac{\sqrt{2}}{2d_0}\right\}\]
is a constant independent of the choice $x^*$. To prove the claim, we define 
\[
L_1 =\sup\left\{L:~ \zeta(y_1) \text{ is well-defined on } [-L,L]\text{ and }|\zeta(y_1)|<\frac{1}{4M}\text{ for any }y_1\in [-L,L]\right\}.
\] 
It is sufficient to show that $L_1\ge \frac{1}{4M}$. Suppose $L_1<\frac{1}{4M}$. Then $\zeta(y_1)$ is well-defined on $(- L_1, L_1)$ and satisfies $|\zeta(y_1)|<\frac{1}{4M}$ for any $y_1\in (- L_1, L_1)$. In fact, $\zeta(y_1)$ is also well-defined at the endpoints $y_1=\pm L_1 $. Indeed, for any  $y_1\in (- L_1, L_1)$, we have 
\begin{equation}\label{gj}
\begin{aligned}|\zeta'(y_1)|=& \left|\frac{\partial_{y_1}F}{\partial_{y_2}F}\right|=\left|\frac{(f_2^r)'(z_1)-(f_2^r)'(0)}{(f_2^r)'(z_1) (f_2^r)'(0)+1}\right|\\
\leq& \frac{|(f_2^r)'(z_1)-(f_2^r)'(0)|}{1+\frac12 |(f_2^r)'(0)|^2-\frac12 |(f_2^r)'(z_1)-(f_2^r)'(0)|^2 }\\
\leq& \frac{M|z_1|}{1-\frac12 M^2|z_1|^2}\\
\leq&\frac{4 \sqrt{2} }{15},
\end{aligned}\end{equation}
since $|z_1|\leq \sqrt{(y_1)^2+(y_2)^2}\le \frac{\sqrt{2}}{4M}$. It follows from \eqref{gj} that $\displaystyle \lim_{ y_1 \to \pm L_1} \zeta(y_1)$ exists and 
\[
|\zeta(y_1)|\leq \frac{4 \sqrt{2} }{15}\cdot \frac{1}{4M}=\frac{\sqrt{2}}{15M} \text{ for any }y_1\in [- L_1, L_1].
\]
Furthermore, it follows from \eqref{7-2} that one has 
\begin{equation}\label{7-5}
\begin{aligned}
\partial_{ y_2} F( y_1,\zeta(y_1))=&\frac{|(f_{2}^r)'(0)|^2+1+[(f_{2}^r)'(z_1)-(f_{2}^r)'(0)](f_{2}^r)'(0)}{\sqrt{|(f_{2}^r)'(0)|^2+1}}\\
\ge& \frac{1+\frac12 |(f_{2}^r)'(0)|^2-\frac12|(f_{2}^r)'(z_1)-(f_{2}^r)'(0)|^2}{\sqrt{|(f_{2}^r)'(0)|^2+1}}\\
\ge& \frac{1+\frac12 |(f_{2}^r)'(0)|^2-\frac12 M^2|z_1|^2}{\sqrt{|(f_{2}^r)'(0)|^2+1}}\\
\ge&\frac12\sqrt{|(f_2^r)'(0)|^2+1}>0,
\end{aligned}
\end{equation}
for any $y_1\in [-L_1,L_1]$. Using implicit function theorem again, one concludes that there exists some $0<\varepsilon_0<\frac{1}{8M}$ such that $\zeta(y_1)$ is well-defined on $[- L_1-\varepsilon_0, L_1+\varepsilon_0]$ and  $|\zeta(y_1)|\leq \frac{3}{8M}$ for any $y_1\in [- L_1-\varepsilon_0, L_1+\varepsilon_0]$. Similar to \eqref{gj}, for any $y_1\in  [- L_1-\varepsilon_0, L_1+\varepsilon_0]$, we have 
\begin{equation}
\begin{aligned}
|\zeta'(y_1)|
\leq& \frac{M|z_1|}{1 -\frac12 M^2|z_1|^2}\leq \frac{24\sqrt{2}}{55},
\end{aligned}
\end{equation}
since $|z_1| \leq \sqrt{|y_1|^2+|y_2|^2} \leq \frac{3\sqrt{2}}{8M}$. It follows that one has 
\[
|\zeta(y_1)\le |\zeta'(\xi)||L_1+\varepsilon_0|\leq \frac{24\sqrt{2}}{55}\cdot  \frac{ 3}{8M}<\frac{1}{4M}.
\]
This leads to a contradiction. 

According to the argument above, one concludes that
\begin{equation}\label{7-6}
    |\zeta(y_1)|\leq \frac{1}{4M} \text{ and }|\zeta'(y_1)|\leq \frac{4\sqrt{2}}{15} \text{ on  }  [-\frac{1}{4M},\frac{1}{4M}].
\end{equation}
Furthermore, using \eqref{7-3} and \eqref{7-4}, one has 
\begin{equation*}
\left\{\begin{aligned}
&|\partial_{y_1 }F| =\frac{|(f_{2}^r)'(0)-(f_2^r)'(z_1)|}{\sqrt{|(f_{2}^r)'(0)|^2+1}} \leq 3\gamma |z_1|\leq \frac{3\gamma\sqrt{2}}{4M}, \\
&|\partial_{y_1y_1}F| =\frac{|(f_2^r)''(z_1)|}{|(f_{2}^r)'(0)|^2+1} \leq 3\gamma,
\\
&|\partial_{y_2 }F| =\frac{|(f_{2}^r)'(0)(f_2^r)'(z_1)+1|}{\sqrt{|(f_{2}^r)'(0)|^2+1}} \leq 1+\beta^2,\\
&|\partial_{y_2y_2}F| = \frac{|(f_{2}^r)'(0)|^2|(f_2^r)''(z_1)|}{ |(f_{2}^r)'(0)|^2+1 } \leq 3\gamma.
\end{aligned}\right. \text{ in  }[-\frac{1}{4M},\frac{1}{4M}]^2.
\end{equation*}
These, together with \eqref{7-5}, give that 
\begin{equation}\label{7-7}
|\zeta''(y_1)|=\frac{|\partial_{y_1y_1}F |\partial_{y_2}F|^2- 2\partial_{y_1y_2}F\partial_{y_1 }F\partial_{y_2}F+\partial_{y_2y_2}F |\partial_{y_1}F|^2 |}{|\partial_{y_2}F|^3}\leq  C(M,\beta,\gamma) 
\end{equation}
on  $[-\frac{1}{4M},\frac{1}{4M}]$. In particular,  by \eqref{gj}, one has $\zeta'(0)=0$.

{\em Step 4.} Denote $d=\frac{1}{16M}$ and define the truncation domains
\[
\omega=\{y\in \R^2:~|y_1|<d,~\zeta(y_1)<y_2<\zeta(y_1)+d\}
\]
and 
\[
\omega'=\left\{y\in \R^2:~|y_1|<\frac{d}{2},~\zeta(y_1)<y_2<\zeta(y_1)+\frac{d}{2}\right\}.
\]

For any $y\in \omega$, one has 
\[|y|=\sqrt{|y_1|^2+|y_2|^2}\leq \sqrt{d^2+(|\zeta(y_1)|+d)^2}\leq  \frac{\sqrt{26}}{16M}\leq \frac{\sqrt{13} d_0}{8}.\]
Then $\omega \subset B_{d_0}(0)$. Recalling the definition of $F(y_1,y_2)$ and noting
\[	\partial_{y_2}F(y_1,y_2)=\frac{(f_{2}^r)'(0)(f_{2}^r)'(z_1)+1}{\sqrt{|(f_{2}^r)'(0)|^2+1}}>0 \text{ for any }y\in \omega,\]
one has 
\[F(y_1,y_2)=f_2^r(z_1)-z_2>0 \text{ in }\omega,\]
since $F(y_1,\zeta(y_1))=0$. This implies  $\omega\subset R(\Omega^r_{t-\beta^*f(t),t})$, where 
\[R(\Omega^r_{t-\beta^*f(t),t})=\{Rz:~z\in \Omega^r_{t-\beta^*f(t),t}\}.\]
On the other hand, since $|z^*|\leq3\delta$, the corresponding point  $y^*:=y(z^*) $ belongs to $B_{4\delta}(0)\cap R(\Omega^r_{t-\beta^*f(t),t})$. 
Thus, there exists a constant $\delta_0$ depending only on $d$ such that $y^*\in \omega'$ as long as $\delta\leq \delta_0$.

{\em Step 5. }
Define 
\[\Bu_R(y)=R \Bu^r(R^{-1}y) \text{ and } p_R(y)=R  p^r( R^{-1}y).\]
Then $(\Bu_R,p_R)$ satisfies the Navier-Stokes system
\begin{equation*} 
	\left\{
	\begin{aligned}
		&-\Delta  \Bu_R+\Bu_R\cdot \nabla \Bu_R+\nabla p_R=0 ~~~~&\text{ in } \omega ,\\
		&\nabla \cdot \Bu_R =0&\text{ in } \omega ,\\
		&\Bu_R=0&\text{ on }\{(y_1,\zeta(y_1)):~|y_1|<d\}.
	\end{aligned}\right.
\end{equation*}

To flatten the boundary, let us introduce the new variables.
\[
	s_1=y_1,\qquad  s_2=y_2-\zeta(y_1).
\]
Then $\omega, \omega'$ are transformed into the rectangles
\[
\hat{\omega}=\left\{(s_1,s_2)\in \R^2:~|s_1|<d,~0<s_2<d\right\}
\]
and
\[
\hat{\omega}=\left\{(s_1,s_2)\in \R^2:~|s_1|<\frac{d}{2},~0<s_2<\frac{d}{2}\right\},
\]
respectively. Let $\hat{\Bu}(s_1,s_2)=\Bu_R(y_1,y_2)$ and  $\hat{p}(s_1,s_2)=p_R(y_1,y_2)$. Then $(\hat{\Bu},\hat{p})$ satisfies
\begin{equation}\label{NS2}
	\left\{
	\begin{aligned}
		&-\Delta \hat{\Bu}+\nabla  \hat{p}=\hat{\Bf} ~~~~&\text{ in }\hat{\omega},\\
		&\nabla \cdot\hat{\Bu}=\hat{g}&\text{ in }\hat{\omega},\\
		&\hat{\Bu}=0&\text{ on }\{(s_1,0):~|s_1|<d\} 
	\end{aligned}\right.
\end{equation}
where 
\begin{equation*}
	\begin{aligned}
	\hat{\Bf} &=-\hat{\Bu}\cdot\nabla \hat{\Bu}+ \zeta'\hat{u}_1 \partial_{s_2}\hat{\Bu} +\left(\zeta'\partial_{s_2}\hat{p} ,0\right)-2\zeta'\partial_{s_1s_2}\hat{\Bu} +|\zeta'|^2\partial_{s_2s_2} \hat{\Bu} -\zeta'' \partial_{s_2}\hat{\Bu}  
	\end{aligned}
\end{equation*}
and
\[
\hat{g}=\zeta'\partial_{s_2}\hat{u}_1.
\]

Let $\varphi\in C^{\infty}(\R^2_+)$ be a smooth cut-off function such that $\varphi=0 $ in $\R^2_+\setminus\hat{\omega}$ and $\varphi = 1 $ in  $\hat{\omega}'$ with 
\[
\hat{\omega}'=\left\{(s_1,s_2):~|s_1|<\frac{d}{2},\text{ }0<s_2<\frac{d}{2}\right\}.
\]
Moreover, $\varphi$ satisfies
\begin{equation}\label{7-8}
	|\nabla \varphi|\leq \frac{C}{d} \text{  and }|\nabla \varphi|\leq \frac{C}{d^2}.
\end{equation} 

Now we set 
\[
\Bw= \varphi \hat{\Bu},~\pi= \varphi \hat{p}
\]
and extend $\Bw, \pi$ by zero to the half space $\R_+^2$. Straightforward computations show that $(\Bw,\pi)$ satisfies the Stokes system
\begin{equation}\label{NS3}
	\left\{
	\begin{aligned}
		&-\Delta\Bw+\nabla\pi=\Bf ~~~~&\text{ in }\R^2_+,\\
		&\nabla\cdot\Bw=g&\text{ in }\R^2_+,\\
		&\Bv=0&\text{ on }\partial\R^2_+ 
	\end{aligned}\right.
\end{equation}
in a weak sense, where 
\begin{equation}\label{F}
\begin{aligned}
\Bf=&-\hat{\Bu}\Delta \varphi-2\nabla\varphi\cdot \nabla \hat{\Bu} +\hat{p}\nabla \varphi+\varphi \hat{\Bf}\\
=&-\hat{\Bu}\Delta \varphi-2\nabla\varphi\cdot \nabla \hat{\Bu} +\hat{p}\nabla \varphi+\varphi(-\hat{\Bu}\cdot\nabla \hat{\Bu}+ \zeta'\hat{u}_1 \partial_{s_2}\hat{\Bu}  -\zeta'' \partial_{s_2}\hat{\Bu} )\\
&+\varphi(\left(\zeta'\partial_{s_2}\hat{p} ,0\right)-2\zeta'\partial_{s_1s_2}\hat{\Bu} +|\zeta'|^2\partial_{s_2s_2} \hat{\Bu})\\
=&-\hat{\Bu}\Delta \varphi-2\nabla\varphi\cdot \nabla \hat{\Bu} +\hat{p}\nabla \varphi+\varphi(-\hat{\Bu}\cdot\nabla \hat{\Bu}+ \zeta'\hat{u}_1 \partial_{s_2}\hat{\Bu}  -\zeta'' \partial_{s_2}\hat{\Bu} )-\left(\zeta'\hat{p}\partial_{s_2}\varphi  ,0\right)\\
&+2\zeta'(\partial_{s_1}\varphi \partial_{s_2}\hat{\Bu}+ \partial_{s_2}\varphi \partial_{s_1}\hat{\Bu}+ \partial_{s_1s_2}\varphi \hat{\Bu}) -|\zeta'|^2(2\partial_{s_2}\varphi \partial_{s_2}\hat{\Bu}+ \partial_{s_2s_2}\varphi \hat{\Bu})\\
&+ \left(\zeta'\partial_{s_2}\pi ,0\right)-2\zeta'\partial_{s_1s_2}\Bw +|\zeta'|^2\partial_{s_2s_2} \Bw\\
\end{aligned}
\end{equation}
and
\begin{equation}\label{G}
	\begin{aligned}
	g=\hat{\Bu}\cdot \nabla\varphi+\zeta'\partial_{s_2} w_1-\hat{u}_1\zeta'\partial_{s_2}\varphi.
	\end{aligned}
\end{equation}
Using \eqref{7-6}-\eqref{7-8}, straightforward computations give
\begin{equation}\label{FF}
	\begin{aligned}
	\|\Bf\|_{L^{\frac{4}{3}}{(\R^2_+)}}\le& C(1+d^{-2})\left(\| \hat{\Bu}\cdot\nabla \hat{\Bu}\|_{L^{\frac{4}{3}}{(\hat{\omega})}}+\|\hat{\Bu}\|_{W^{1,\frac{4}{3}}{(\hat{\omega})}} +\|\hat{p}\|_{L^{\frac{4}{3}}{(\hat{\omega})}}\right)\\
	&+\|\zeta'\nabla \pi\|_{L^{\frac{4}{3}}{(\R^2_+)}} + \|\left(|\zeta'|^2+2|\zeta'|\right)\nabla^2\Bw\|_{L^{\frac{4}{3}}{(\R^2_+)}}
	\end{aligned}
\end{equation}
and
 \begin{equation}\label{GG}
 	\begin{aligned}
 		\|\nabla g\|_{L^\frac{4}{3} (\R^2_+)}\le C(1+d^{-2})\|\hat{\Bu}\|_{W^{1,\frac{4}{3}}{(\hat{\omega})}}+\|\zeta'\nabla^2\Bw\|_{L^{\frac{4}{3}}{(\R^2_+)}}.
 	\end{aligned}
 \end{equation}

In particular, using \eqref{7-6}, we have 
\[
|\zeta'|^2+2|\zeta'|<1 \text{ and }|\zeta'|<1.
\]
Then it follows from Lemma \ref{bpm} that
\begin{equation}\label{ees}
	\begin{aligned}
		\|\hat{\Bu}\|_{W^{2,\frac{4}{3}}{(\hat{\omega}')}}+\|\hat{p}\|_{W^{1,\frac{4}{3}}{(\hat{\omega}')}}\le C\left(\| \hat{\Bu}\cdot\nabla \hat{\Bu}\|_{L^{\frac{4}{3}}{(\hat{\omega})}}+\|\hat{\Bu}\|_{W^{1,\frac{4}{3}}{(\hat{\omega})}}+\|\hat{p}\|_{L^{\frac{4}{3}}{(\hat{\omega})}}\right).
	\end{aligned}
\end{equation}
Similar to Remark \ref{qe}, one could also remove the term involving the pressure on the right-hand side of \eqref{ees} and obtain
\begin{equation}
	\begin{aligned}
			\|\hat{\Bu}\|_{W^{2,\frac{4}{3}}{(\hat{\omega}')}} \le C\left(\| \hat{\Bu}\cdot\nabla \hat{\Bu}\|_{L^{\frac{4}{3}}{(\hat{\omega})}}+\|\hat{\Bu}\|_{W^{1,\frac{4}{3}}{(\hat{\omega})}}\right).
	\end{aligned}
\end{equation}
Using Sobolev  embedding inequality, we have 
\begin{equation}\label{7-9}
	\begin{aligned}
			\|\hat{\Bu}\|_{L^\infty(\hat{\omega}')} \leq & C\|\hat{\Bu}\|_{W^{2,\frac{4}{3}}{(\hat{\omega}')}} \leq C  \left(\| \hat{\Bu}\cdot\nabla \hat{\Bu}\|_{L^{\frac{4}{3}}{(\hat{\omega})}}+\|\hat{\Bu}\|_{W^{1,\frac{4}{3}}{(\hat{\omega})}}\right)\\
			\leq& C  \left(\| \hat{\Bu} \|_{L^4{(\hat{\omega})}}\| \nabla \hat{\Bu}\|_{L^2 (\hat{\omega}) }+\|\hat{\Bu}\|_{W^{1,2}{(\hat{\omega})}}\right)\\
			\leq& C(1+\| \nabla \hat{\Bu}\|_{L^2 (\hat{\omega}) })\|\hat{\Bu}\|_{W^{1,2}{(\hat{\omega})}} .
	\end{aligned}
\end{equation}
Note that 
\[\hat{\Bu}(s_1,s_2)=\Bu_R(s_1,\zeta(s_1)+s_2) ,~\hat{p}(s_1,s_2)=p_R(s_1,\zeta(s_1)+s_2) \]
and 
\[
 \partial_{s_1}\hat{\Bu}=\partial_{y_1}\Bu_R+ \zeta' \partial_{y_2}\Bu_R,~\partial_{s_2}\hat{\Bu}=\partial_{y_2}\Bu_R ,~\partial_{s_1}\hat{p}=\partial_{y_1}p_R+ \zeta' \partial_{y_2}p_R,~\partial_{s_2}p=\partial_{y_2}p.
\]
Hence it follows from \eqref{7-9} that we have 
\begin{equation}
	\begin{aligned}
		\|\Bu^r\|_{L^\infty( R^{-1}(\omega') )}=&\|\Bu_R\|_{L^\infty( \omega' )}=\|\hat{\Bu}\|_{L^\infty(\hat{\omega}')}\\
		\leq& C(1+\| \nabla \hat{\Bu}\|_{L^2 (\hat{\omega}) })\|\hat{\Bu}\|_{W^{1,2}{(\hat{\omega})}}  \\
		\le& C(1+\| \nabla \Bu_R\|_{L^2 (\omega) })\|\Bu_R\|_{W^{1,2}{(\omega)}} \\
		\leq & C(1+\| \nabla \Bu^r\|_{L^2 (\Omega_{t-\beta^*f(t),t}^r) })\|\Bu^r\|_{W^{1,2}{(\Omega_{t-\beta^*f(t),t}^r)}}  ,
	\end{aligned}
\end{equation}
where $R^{-1}(\omega')=\{z:~Rz \in \omega'\}$. Finally, we have 
\begin{equation*}
\begin{aligned}
	|\Bu(x^*)|=&r^{-1} |\Bu^r(z^*)| \leq r^{-1}\|\Bu^r\|_{L^\infty(R^{-1}(\omega'))}\\
\leq&Cr\|\nabla\Bu\|_{L^2(\Omega_{t-\beta^*f(t),t})}^2+  C\|\nabla\Bu\|_{L^2(\Omega_{t-\beta^*f(t),t})}\\
&+C\|\nabla\Bu\|_{L^2(\Omega_{t-\beta^*f(t),t})}\| \Bu\|_{L^2(\Omega_{t-\beta^*f(t),t})}+ Cr^{-1}\| \Bu\|_{L^2(\Omega_{t-\beta^*f(t),t})}\\
\leq& \frac{C}{f(x_1^*)}.
\end{aligned}
\end{equation*}
This finishes the proof of the proposition.
\end{proof}
Combining Proposition \ref{interior decay} and \ref{boundary decay}, we obtain Theorem \ref{decay rate}.


\medskip

\end{document}